\DeclareRobustCommand*{\bfseries}{%
	\not@math@alphabet\bfseries\mathbf
	\fontseries\bfdefault\selectfont
	\boldmath
}
\theoremstyle{plain}
\newtheorem{theorem}{Theorem}[section]
\newtheorem{proposition}[theorem]{Proposition}
\newtheorem{prop}[theorem]{Proposition}
\newtheorem{lemma}[theorem]{Lemma}
\newtheorem{corollary}[theorem]{Corollary}
\theoremstyle{definition}
\newtheorem{definition}[theorem]{Definition}
\newtheorem{example}[theorem]{Example}
\newtheorem{remark}[theorem]{Remark}
\newtheorem{observation}[theorem]{Observation}
\numberwithin{equation}{section}
\newcommand{\NN}{{\mathbb {N}}}
\newcommand{\ZZ}{{\mathbb {Z}}}
\newcommand{\sP}{{\mathsf {P}}}
\newcommand{\sQ}{{\mathsf {Q}}}
\newcommand{\sT}{{\mathsf {T}}}
\newcommand{\FC}{{{\rm FC}}}
\newcommand{\Des}{{\operatorname{Des}}}
\newcommand{\shape}{{\operatorname{shape}}}
\newcommand{\SYT}{{\operatorname{SYT}}}
\newcommand{\SDT}{{\operatorname{SDT}}}
\newcommand{\BSYT}{{\operatorname{BSYT}}}
\newcommand{\SSDT}{{\operatorname{SSDT}}}
\newcommand{\DSYT}{{\operatorname{SDT}}}
\newcommand{\bl}{{\operatorname{bl}}}
\newcommand{\ldes}{{\operatorname{ldes}}}
\newcommand{\Neg}{{\operatorname{Neg}}}
\newcommand{\wDes}{{\operatorname{rDes}}}
\newcommand{\X}{{\operatorname{X}}}
\newcommand{\Y}{{\operatorname{Y}}}
\newcommand{\Q}{{\mathcal Q}}
\newcommand{\bj}[1]{{\color{blue}{#1}}}
\renewcommand{\H}{\operatorname{Heap}}
\title[Block number and descents of fully-commutative elements]{
Block number, descents 
 and Schur positivity\\ 
	of fully commutative elements in $B_n$}
\author{Eli Bagno}
\address{Jerusalem college of technology, Jerusalem, Israel}
\email{bagnoe@jct.ac.il}
\author{Riccardo Biagioli}
\address{Dipartimento di Matematica, Universit\`a di Bologna, Piazza di Porta San Donato 5, 40126 Bologna, Italy}
\email{riccardo.biagioli2@unibo.it}
\author{Fr\'ed\'eric Jouhet}
\address{Institut Camille Jordan, Universit\'e Claude Bernard Lyon 1, 69622 Villeurbanne
Cedex, France}
\email{jouhet@math.univ-lyon1.fr}
\author{Yuval Roichman}
\address{Department of Mathematics, Bar-Ilan University, 
		Ramat-Gan 52900, Israel}
\email{yuvalr@math.biu.ac.il}
\date{\today}
\thanks{E.B, R.B and F.J were partially supported by the Israeli Ministry of Science and Technology, and the French National Scientific Research Center (CNRS), grant PRC 1656, 
Y.R. was partially supported by the Israel Science Foundation, grant no.\ 1970/18.}
\begin{document}

\begin{abstract}
The distribution of Coxeter descents and block number over the set 
of fully commutative elements in the hyperoctahedral group $B_n$, $\FC(B_n)$, is studied in this paper.
We prove that the associated Chow quasi-symmetric generating function is equal to a non-negative sum of products of two Schur functions.
The proof involves a decomposition of $\FC(B_n)$ into a disjoint union of two-sided Barbash-Vogan combinatorial  
cells, a type $B$ extension of Rubey's descent preserving involution on $321$-avoiding permutations and a detailed study of the intersection of $\FC(B_n)$ with $S_n$-cosets which yields a new decomposition of $\FC(B_n)$ into disjoint subsets called fibers. We also compare two different type $B$ Schur-positivity notions, arising from works of Chow and Poirier. 
\end{abstract}

\maketitle


\section{Introduction}\label{sec:intro}


\subsection{Outline}

An element $w$ in a Coxeter group $W$ is fully commutative
if any reduced expression for $w$ in Coxeter generators can be obtained from any other using only commutation relations.
The study of these elements was motivated by
generalizations of the Temperley–Lieb algebra to all Coxeter types.  
Fan~\cite{Fan} and Graham~\cite{Graham} proved 
that for every Coxeter group $W$, the associated 
Temperley–Lieb algebra admits a linear basis indexed by the fully commutative elements in $W$.
 Various combinatorial characterizations, enumeration 
and connections with enriched $P$-partitions 
and Schur’s $Q$-functions were studied in a series of papers by Stembridge~\cite{ST1, ST3, ST4}.
Compatibility of the 
Kazhdan--Lusztig cell decomposition of a Coxeter group $W$ with the set of fully commutative elements was studied 
by Green and Losonczy~\cite{GL}.

\medskip

The graded ring of quasi-symmetric functions, introduced by Gessel~\cite{Gessel},
has many applications to enumerative combinatorics, as well as to other branches of mathematics; see, e.g., 
\cite[Ch.\ 7]{EC2}.
A quasi-symmetric function is a formal power series $f(x_1,x_2,\dots,)$ of bounded degree such that for each fixed $k$-tuple $(\alpha_1,\ldots,\alpha_k)$ of nonnegative integers, with $k \in \NN$, all the monomials in $f$ of the form $x_{i_1}^{\alpha_1} \cdots x_{i_k}^{\alpha_k}$, where $i_1<i_2<\cdots< i_k$, share the same coefficient. The vector space of all quasi-symmetric functions which are homogeneous of degree $n$, 
 has a 
distinguished basis  $\{{F}_J \mid J\subseteq [n-1\}\}$, where
$[n-1]:=\{1,2,\,\ldots, n-1\}$ and
$F_J$ is the Gessel fundamental quasi-symmetric 
function indexed by $J\subseteq [n-1]$.


\medskip

The  block number of a permutation $\pi$ in $S_n$, which was studied in~\cite{St} as the cardinality of the connectivity set of $\pi$, is equal to the maximal number of summands in an expression of $\pi$ as a direct sum of smaller permutations.  
It was shown recently that the quasi-symmetric generating function of the descent set statistic over
the set of $321$-avoiding permutations with prescribed block number
is Schur-positive~\cite{ABR}. 
Actually the 321-avoiding permutations in the symmetric group $S_n$ are in one-to-one correspondence with the fully commutative elements in the Coxeter group of type $A_{n-1}$~\cite{BJS}. 
Similarly, the set of fully commutative elements in the Coxeter group of type $B_n$ has an explicit combinatorial description in terms of several forbidden patterns in signed permutations~\cite{ST3}.

\medskip

The concept of quasi-symmetric functions has been extended to Coxeter group of type $B_n$ in two different ways. Chow's construction applies the presentation of $B_n$ as a Coxeter group; Chow's fundamental quasi-symmetric functions are indexed by type $B$ Coxeter descent sets~\cite{Chow}. Poirier's 
construction applies the presentation of $B_n$ as a wreath product, or equivalently as a colored permutation group; Poirier's fundamental basis elements are indexed by signed descent sets~\cite{Po}. For discussion and comparison of these two families of quasi-symmetric functions of type $B$ see~\cite{Petersen, MV4, AAER}.
In the current paper, we study the type $B$ quasi-symmetric functions
determined by the Coxeter descent sets of fully commutative elements in $B_n$. 
It turns out that while Poirier's approach is not useful in this setting, Chow's provides
a nice description.  In particular, we give an explicit expansion of Chow's quasi-symmetric generating functions 
over the subset of fully commutative elements with a prescribed block number in the Coxeter groups of type $B_n$ in terms of Schur functions and show that the coefficients are non-negative.

\subsection{Main results} 
For a positive integer $n$ and $W$ the Coxeter group of type $A_{n-1}$ or $B_n$, let $\FC(W)$ be its subset of fully commutative elements (see Section~\ref{sec:fc} below for precise definitions). For an integer partition $\lambda$, denote by $s_\lambda$ the associated Schur function.
The {\it block number} of a permutation $\pi=[\pi_1,\dots,\pi_n]$ in the symmetric group $S_n$ is defined by
\[
\bl(\pi):=\#\{i \mid (\forall j<i)\ \pi_j\le i\}=1+\#\{1\le i\le n-1 \mid \max(\pi_1, \dots , \pi_i) < \min(\pi_{i+1}, \dots , \pi_n)\}.
\]
Let $F_{\Des(\pi)}$ be Gessel's fundamental quasi-symmetric function indexed by the (right) descent set 
$\Des(\pi)$, as defined in Section~\ref{sec:fc}.
For a pair of partitions $\lambda,\mu$ such that $\mu\subseteq\lambda$, denote the set of standard Young tableaux of skew shape $\lambda/\mu$ by $\SYT(\lambda/\mu)$ (see~\cite{EC2} for definitions of these classical objects). 
For a standard Young tableau $T\in \SYT(\lambda/\mu)$, let $\ldes(T)$ be the maximal descent of $T$; if the descent set is empty we set $\ldes(T):=0$ (see Section~\ref{sec:background} for more details).

The following Schur-positivity result is a reformulation of~\cite[Theorem 1.2]{ABR}. 
\begin{theorem}\label{thm:ABR}
For any positive integer $n$, we have
\begin{equation}\label{eq:ABR}
\sum\limits_{\pi \in \FC(S_n)} q^{\bl(\pi)} F_{\Des(\pi)}=\sum\limits_{k=0}^{\lfloor n/2\rfloor}
\left(\sum\limits_{j=0}^n  a_{n,k,j} \ q^j\right) s_{(n-k,k)},
\end{equation}
where $s_{\lambda}$ is the Schur function corresponding to the partition $\lambda$ and 
\[
a_{n,k,j}:=\#\{T\in \SYT(n-k,k) \mid \ldes(T)=n-j \},
\]
which is thus non-negative.
\end{theorem}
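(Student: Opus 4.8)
The plan is to establish that this is a \emph{reformulation} of the known Schur-positivity result \cite[Theorem 1.2]{ABR}, so the work is essentially bookkeeping: translating the statement of the cited theorem into the precise form displayed in \eqref{eq:ABR}, and verifying that the combinatorial coefficient $a_{n,k,j}$ matches. First I would recall the original statement from \cite{ABR}, which asserts that the descent-set generating function over $321$-avoiding permutations of $S_n$ with prescribed block number is Schur-positive, expanded in the Schur functions $s_{(n-k,k)}$ (these are exactly the shapes with at most two rows, which is forced since the quasi-symmetric functions coming from $321$-avoiding permutations can only involve partitions with at most two rows by the RSK correspondence). The bijection of \cite{BJS} identifying $\FC(S_n)$ with the $321$-avoiding permutations justifies replacing the indexing set in the left-hand sum by $\FC(S_n)$.

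\medskip

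Next I would match the coefficients. The key point is that when one expands a symmetric function of the form $\sum_\pi F_{\Des(\pi)}$ (summed over a descent-closed or suitably structured set) in the Schur basis, the multiplicity of $s_\lambda$ is the number of standard Young tableaux of shape $\lambda$ whose descent set arises in a prescribed way; this is the standard Gessel expansion $s_\lambda = \sum_{T\in\SYT(\lambda)} F_{\Des(T)}$. The substance of the reformulation is therefore to track how the block-number statistic $\bl(\pi)$ transfers, under the RSK-type correspondence implicit in \cite{ABR}, to the statistic $\ldes(T)$ on the recording (or insertion) tableau of shape $(n-k,k)$. I would verify that the power of $q$ attached to a tableau $T$ is exactly $n-\ldes(T)$, so that the coefficient of $q^j s_{(n-k,k)}$ counts tableaux with $\ldes(T)=n-j$, giving precisely $a_{n,k,j}$. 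Non-negativity of $a_{n,k,j}$ is then immediate, since it is manifestly a cardinality.

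\medskip

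The main obstacle I anticipate is purely notational rather than mathematical: one must be careful that the \emph{direction} of the block-number statistic in the present paper agrees with the convention of \cite{ABR}, since the substitution $j \mapsto n-j$ in the definition $a_{n,k,j}=\#\{T\mid\ldes(T)=n-j\}$ signals a reversal of orientation somewhere (most likely between block number, which counts from the left, and the maximal descent $\ldes$, which is read from the tableau). I would resolve this by carefully comparing the two definitions of $\bl(\pi)$ given in the statement (the connectivity-set form and the direct-sum form) against the descent statistic on tableaux, confirming that the complementation $q^j \leftrightarrow q^{n-j}$ reconciles them, after which the identity \eqref{eq:ABR} follows directly from \cite[Theorem 1.2]{ABR}.
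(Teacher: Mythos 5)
Your proposal is correct and takes essentially the same route as the paper: the paper offers no independent proof of this theorem, presenting it exactly as a reformulation of \cite[Theorem 1.2]{ABR}, which is the translation you carry out (identifying $\FC(S_n)$ with $321$-avoiding permutations via \cite{BJS}, expanding in Schur functions through Gessel's formula $s_\lambda=\sum_{T\in\SYT(\lambda)}F_{\Des(T)}$, and matching $\bl(\pi)$ with $n-\ldes(T)$ as in the cited result). Your bookkeeping, including the check that the complementation $q^j\leftrightarrow q^{n-j}$ reconciles the conventions, is precisely the content implicit in the paper's citation.
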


\medskip

The main goal of the present work is to prove a type $B$ analogue of the above result.
The block number of a signed permutation 
$w=[w_1,\dots,w_n]\in B_n$
is defined by
\[
\bl(w):=1+\#\{1\le i\le n-1 \mid \max(w_1, \dots , w_i) < \min(w_{i+1}, \dots , w_n)\}.\]

Let $s_\lambda(x_I)$ be the Schur function in the set of indeterminates indexed by the elements in the ordered set $I$, and let 
$F^B_{\Des_B(w)}$ be Chow's fundamental quasi-symmetric function indexed by the type $B$ (right) descent set $\Des_B(w)$, see Section~\ref{sec:Bn} for detailed definitions.
Consider the natural embedding of $S_n$ as a maximal parabolic subgroup of $B_n$.
\smallskip

Our main result is the following type $B$ analogue of Theorem~\ref{thm:ABR}.

\begin{theorem}\label{thm:main}
For any positive integer $n$, we have
\begin{equation}
\sum\limits_{w \in \FC(B_n)\setminus \FC(S_n)} q^{\bl(w^{-1})} F^B_{\Des_B(w)}=\sum\limits_{k=1}^{n} \left(\sum\limits_{j=0}^n b_{n,k,j} q^j \right)
s_{(k)}(x_1,x_2,\ldots)\ s_{(n-k)}(x_0, x_1,\ldots),
\end{equation}
where 
\[
b_{n,k,j}:=\#\{T\in \SYT((n,k)/(k)) \mid \ldes(T)=n-j \},
\]
which is thus non-negative.
\end{theorem}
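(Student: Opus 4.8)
The plan is to reduce the type $B$ statement to the type $A$ result (Theorem~\ref{thm:ABR}) by carefully analyzing how fully commutative elements of $B_n$ that are \emph{not} in $S_n$ decompose relative to the parabolic subgroup $S_n$. The abstract announces exactly the three tools I would orchestrate: a decomposition of $\FC(B_n)$ into fibers obtained by intersecting with $S_n$-cosets, a type $B$ extension of Rubey's descent-preserving involution, and the two-sided Barbash--Vogan cell structure. First I would set up the combinatorial description of $\FC(B_n)$ from Stembridge's forbidden-pattern characterization~\cite{ST3}, and for each $w \in \FC(B_n)\setminus\FC(S_n)$ identify which entries are negative; since $w\notin S_n$ at least one entry is negative. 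The key structural claim I would aim for is that such a $w$ factors canonically as a product of a ``type $B$ part'' living in the first few positions (controlling the $s_{(k)}(x_1,x_2,\dots)$ factor) and a ``type $A$ part'' (controlling the $s_{(n-k)}(x_0,x_1,\dots)$ factor), with $k$ recording the number of negative entries, so that $\bl(w^{-1})$ and $\Des_B(w)$ split multiplicatively along this factorization.

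Concretely, I would introduce the fiber decomposition: partition $\FC(B_n)\setminus\FC(S_n)$ according to the positions and multiset of negative values, and show that within each fiber Chow's quasi-symmetric function $F^B_{\Des_B(w)}$ factors as a product of a type $B$ fundamental function on the negative part and a Gessel fundamental function on the remaining (positive, order-isomorphic to a 321-avoiding pattern) part. The fully commutative condition in type $B$, translated through Stembridge's heaps, should force the negative entries to occupy an initial segment in a rigid way and force the positive part to be $321$-avoiding, which is precisely what links each fiber to a copy of $\FC(S_{n-k})$ and lets me invoke Theorem~\ref{thm:ABR}. Here the stratification by $k$ (the number of negatives) is what produces the outer sum $\sum_{k=1}^n$ in the statement, and the condition $w\notin S_n$ forces $k\ge 1$, explaining why the sum starts at $k=1$ rather than $k=0$.

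Next I would handle the generating-function bookkeeping. Summing over each fiber, the type $B$ Chow function contributes the Schur function $s_{(k)}(x_1,x_2,\dots)$ (a single-row Schur function in the ``positive alphabet'' with a zeroth letter absent, reflecting that the negative coordinates cannot take the value $0$), while the type $A$ part contributes $s_{(n-k)}(x_0,x_1,\dots)$ after applying the type $B$ analogue of Rubey's involution to symmetrize the descent distribution into a genuine Schur function. The Rubey involution is what converts the raw sum of fundamental functions over a fiber into a positive Schur expansion, exactly as in the proof of Theorem~\ref{thm:ABR}; extending it to respect the sign data and the type $B$ descent at position $0$ is the technical heart. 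The coefficient $b_{n,k,j}=\#\{T\in\SYT((n,k)/(k)) \mid \ldes(T)=n-j\}$ should emerge because the relevant standard Young tableaux of the skew shape $(n,k)/(k)$ biject with pairs consisting of a single-row filling (the $k$ negatives) and a two-row-free filling (the $n-k$ positives), with $\ldes$ tracking the block number $\bl(w^{-1})$ through the same mechanism as in type $A$.

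The main obstacle I expect is establishing the clean multiplicative factorization of $F^B_{\Des_B(w)}$ across the fiber decomposition, and in particular proving that the type $B$ descent structure (including the special descent at $0$, which distinguishes Chow's construction from Poirier's) interacts correctly with the negative/positive split. Getting the $x_0$ to appear in the second factor but not the first is delicate and depends on exactly how Chow's descent set treats the value $0$; I would need a precise lemma showing that the block number of $w^{-1}$ depends only on the positive part in the expected way, and that the type $B$ fundamental quasi-symmetric function stabilizes to the product form. A secondary difficulty is checking that the extended Rubey involution is genuinely descent-preserving and an involution on the enlarged (signed) set of tableaux; once these two lemmas are in place, the theorem follows by summing the fiber contributions over $k$ and matching the resulting Schur expansion term by term with the claimed right-hand side.
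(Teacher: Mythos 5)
Your proposal names the paper's three ingredients (fibers, Rubey's involution, Barbash--Vogan cells), but the mechanism you build from them is not the paper's, and its central structural claims fail. First, full commutativity does \emph{not} force the negative entries to occupy an initial segment: by Stembridge's criterion (Proposition~\ref{prop:fc1}), the elements $[1,-2,3]$, $[-2,3,-1]$ and $[3,-2,-1]$ all lie in $\FC(B_3)$. Second, the outer index $k$ in Theorem~\ref{thm:main} is \emph{not} the number of negative entries, and stratifying $\FC(B_n)\setminus \FC(S_n)$ by the number of negatives cannot produce the right-hand side. Concretely, for $n=3$ the set $\FC(B_3)\setminus \FC(S_3)$ has $19$ elements, of which $13$ have one negative entry, $5$ have two, and $1$ has three; but the $k=1$ and the $k=2$ terms of the theorem each expand into $9$ distinct monomials $q^jF^B_J$ (three values of $j$ with $b_{3,k,j}=1$, times three standard bi-tableaux of the relevant bi-shape), so the required stratum sizes would be $(9,9,1)$, not $(13,5,1)$. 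Hence no fiber-by-fiber or stratum-by-stratum factorization of $\sum q^{\bl(w^{-1})}F^B_{\Des_B(w)}$ of the kind you describe can exist; the identity only holds globally, after nontrivial cancellation.

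The paper's actual route is different in exactly the places where your plan breaks. The Schur-product structure does not come from summing $F^B$ over fibers: it comes from the cell decomposition (Theorem~\ref{thm:cells}), which says $\FC(B_n)$ is a disjoint union of two-sided combinatorial cells of admissible domino shapes, so that by Proposition~\ref{prop: Taskin_Des} the joint distribution of $(\Des_B(w),\Des_B(w^{-1}))$ over $\FC(B_n)$ equals that of the descent sets over pairs of standard domino tableaux; the Carr\'e--Leclerc $2$-quotient (Remark~\ref{rem:bijections}) and Proposition~\ref{thm:schurB} then turn these tableau sums into the products $s_{(k)}(x_1,x_2,\ldots)\,s_{(n-k)}(x_0,x_1,\ldots)$. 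The fibers of Theorem~\ref{thm:fibers} are indexed by the standardization $\pi\in\FC(S_n)$ (not by negative-entry data), and together with Rubey's involution they serve only to prove the equidistribution of Theorem~\ref{thm:2}, i.e.\ to trade $\bl(w^{-1})$ for $n-\ldes(w^{-1})$ while preserving $\Des_B(w)$; they do not symmetrize anything into Schur functions. Finally, your plan ignores the left-peak elements of $\FC(B_n)\setminus\FC(S_n)$, whose cells have shapes $(2n-2k,2k-1,1)$ and contribute two-row terms $s_{(n-k,k)}(x_0,x_1,\ldots)$; in the paper these are cancelled against the $\FC(S_n)$ contribution via the type $A$ result (Theorem~\ref{thm:ABR} in the form of Equation~\eqref{eq:A}), and it is precisely this cancellation that leaves only single-row products on the right-hand side. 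Without the cell decomposition and this cancellation step, the argument cannot be completed.
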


To prove Theorem~\ref{thm:main}, we will combine two new explicit decompositions 
of $\FC(B_n)$, one as a disjoint union of fibers (see Theorem~\ref{thm:fibers}), and one as a disjoint union of Barbash-Vogan combinatorial cells (see Theorem~\ref{thm:cells}),  together with an equidistribition phenomenon which takes the following form.

For a subset $J\subseteq \{0,1,\ldots,n\}$ let ${\bf x}^J:=\prod_{i\in J}x_i$.

\begin{theorem}\label{thm:2} For any positive integer $n$ we have the following equidistribution on $\FC(B_n)$:
\[
\sum\limits_{w\in \FC(B_n)} {\bf x}^{\Des_B(w)}{\bf z}^{{\rm Neg}(w)}q^{\bl(w^{-1})}t^{n-\ldes(w^{-1})}=
\sum\limits_{w\in \FC(B_n)} {\bf x}^{\Des_B(w)}{\bf z}^{{\rm Neg}(w)}q^{n-\ldes(w^{-1})}t^{\bl(w^{-1})}.
\]
\end{theorem}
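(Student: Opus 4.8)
The plan is to produce an involution $\Theta\colon \FC(B_n)\to\FC(B_n)$ that fixes both the type $B$ descent set and the negation set while interchanging the two remaining statistics; that is,
\[
\Des_B(\Theta(w))=\Des_B(w),\qquad \Neg(\Theta(w))=\Neg(w),\qquad \bl(\Theta(w)^{-1})=n-\ldes(w^{-1}).
\]
Since $\Theta$ is an involution, the last identity also gives $n-\ldes(\Theta(w)^{-1})=\bl(w^{-1})$. Granting such a $\Theta$, pairing each $w$ with $\Theta(w)$ in the left-hand sum of the statement immediately produces the right-hand sum: the monomial ${\bf x}^{\Des_B(w)}{\bf z}^{\Neg(w)}$ is preserved, while the exponents of $q$ and $t$ are exchanged.

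To build $\Theta$ I would first reduce to type $A$. By the fiber decomposition of $\FC(B_n)$ (Theorem~\ref{thm:fibers}), together with the analysis of the intersection of $\FC(B_n)$ with the $S_n$-cosets, each $w\in\FC(B_n)$ is encoded by its negation set $\Neg(w)$ and an underlying $321$-avoiding permutation, i.e. an element of $\FC(S_m)$ for the appropriate $m$. On this underlying type $A$ datum I would apply the type $B$ extension of Rubey's descent-preserving involution: Rubey's involution on $321$-avoiding permutations preserves the type $A$ descent set and interchanges $\bl(\cdot^{-1})$ with $n-\ldes(\cdot^{-1})$, which is exactly the engine behind Theorem~\ref{thm:ABR}. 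The point is to lift this map to the signed setting so that the negation data is left untouched, and then to re-assemble $\Theta(w)$ from $\Neg(w)$ and the image of the underlying permutation.

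The key verifications are that $\Theta$ is well defined on $\FC(B_n)$ (that is, fully commutativity is preserved), that it preserves the full descent set $\Des_B$ — not only the ordinary descents in $\{1,\dots,n-1\}$, but also the initial descent at $0$, which is governed entirely by $\Neg(w)$ and is therefore automatically fixed — and that it effects the claimed swap of $\bl(w^{-1})$ and $n-\ldes(w^{-1})$. I expect this last compatibility to be the main obstacle: both $\bl(w^{-1})$ and $\ldes(w^{-1})$ are statistics of the \emph{inverse}, whereas negations act on positions, so one must control simultaneously how inversion, negation, and Rubey's map (most naturally described in terms of descent compositions) interact. The heart of the argument should be a lemma showing that the type $A$ involution, applied fiberwise, respects the decomposition dictated by $\Neg(w)$ and descends correctly to the inverse statistics, so that $\bl(w^{-1})$ and $n-\ldes(w^{-1})$ are exactly exchanged while $\Des_B$ and $\Neg$ stay fixed.

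As an alternative route, one could instead carry the swap through the Barbash--Vogan cell decomposition (Theorem~\ref{thm:cells}), translating $\bl(w^{-1})$ and $\ldes(w^{-1})$ into descent statistics on the associated pairs of standard Young tableaux and invoking a transpose/evacuation symmetry of the descent statistic on $\SYT$. I nonetheless favor the bijective route through Rubey's involution, since it gives the cleanest statistic-by-statistic control of $\Des_B$ and $\Neg$ that the argument requires.
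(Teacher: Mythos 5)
Your proposal is correct and is essentially the paper's own proof: the paper defines $\Phi(w):=f(\pi^{-1})\cdot\mu^{-1}$, where $w^{-1}=\mu\cdot\pi$ is the fiber decomposition of Theorem~\ref{thm:fibers} and $f$ is Rubey's involution, which is exactly your ``re-assemble $\Theta(w)$ from $\Neg(w)$ and the image of the underlying $321$-avoiding permutation'' construction. The key verifications you single out are precisely what the paper proves: Lemma~\ref{v-preserving} (the fibers $B_n(\pi)$ and $B_n(f(\pi^{-1})^{-1})$ coincide, which gives well-definedness on $\FC(B_n)$), Lemma~\ref{Des-preserving} (preservation of $\Des_B$, including the descent at $0$), and Proposition~\ref{thm:Phi} (the exchange of $\bl(w^{-1})$ with $n-\ldes(w^{-1})$ and the invariance of $\Neg$).
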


Here, ${\rm Neg}$ and $\ldes$ denote the negative set and the last descent of a signed permutation, respectively (see Section~\ref{sec:Bn} for precise definitions).

\smallskip

Finally, we compare the two different notions of type $B$ Schur-positivity, based on Chow's and Poirier's approaches, studied in~\cite{MV4} and~\cite{AAER}, respectively. 
It is shown that every Poirier type $B$ Schur-positive set is a Chow type $B$ Schur-positive set (see Theorem~\ref{lem:PC}). 
The converse does not hold: 
the set of fully commutative elements in $B_n$, $\FC(B_n)$, is Chow type $B$ Schur-positive but not Poirier type $B$ Schur-positive, as concluded in  Remark~\ref{rem:converse}. 

\smallskip

The paper is organized as follows. In Sections~\ref{sec:background} and~\ref{sec:heaps} we  provide the necessary background:  Section~\ref{sec:background} is devoted to Coxeter groups, fully commutative elements, quasi-symmetric functions associated with the hyperoctahedral group, and the different kinds of tableaux and statistics that will be in use; Section~\ref{sec:heaps} deals with the theory of heaps as defined by Viennot~\cite{ViennotHeaps}. 
Using these heaps, we prove in Section~\ref{sec:fibers} our decomposition of the set $\FC(B_n)$ into fibers. 
In Section~\ref{sec:Cellular}, we describe the cellular structure of $\FC(B_n)$.
In Section~\ref{sec:equidistribution}, we prove the equidistribution result given in Theorem~\ref{thm:2} above,  using   the results of Section~\ref{sec:fibers} and an involution due to Rubey~\cite{Rubey}.  In Section~\ref{sec:proofMain}  
we prove Theorem~\ref{thm:main} and  Section~\ref{sec:types}
ends the paper with a discussion on the above mentioned two notions of type $B$ Schur-positivity.

\section{Background}
\label{sec:background}

\subsection{Coxeter groups and fully commutative elements}~\label{sec:fc}
Let $(W,S)$ be a Coxeter system with Coxeter matrix $M=(m_{st})_{s,t \in S}$. We recall that the finite set of generators $S$ is subject only to relations of the form $(st)^{m_{st}}=1$, where $m_{ss}=1$, and  $m_{st}=m_{ts}\geq 2$, for $s\neq t \in S$. If $st$ has infinite order we set $m_{st}=\infty$. These relations can be rewritten more explicitly as $s^2=1$ for all $s\in S$, and   
$$\underbrace{sts\cdots}_{m_{st}}=\underbrace{tst\cdots}_{m_{ts}},$$
where $m_{st} <\infty$. These are the so-called {\em braid relations}. When ${m_{st}}=2$, they are named {\em commutation relations}, $st=ts$. 
This information is encoded in the {\it Dynkin diagram}, which is a graph with one vertex for each $s \in S$ and in which an edge connects two elements $s,t\in S$ if and only if $m_{st} \geq 3$. When $m_{st}>3$, we write the number $m_{st}$ above the edge connecting $s$ and $t$.

\begin{figure}[!ht]
\begin{center}
\includegraphics{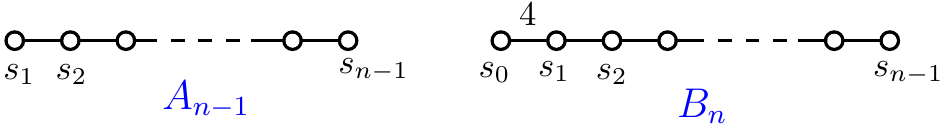}
\caption{\label{AB-types} The  Dynkin diagrams of types  $A_{n-1}$ and $B_n$.}
\end{center}
\end{figure}

For $w\in W$,  the {\em length} of $w$, denoted $\ell(w)$, is the minimum length $\ell$ of any expression of $w$ as a product $s_{i_1}\cdots s_{i_\ell}$ with $s_{i_j} \in S$. These expressions of length $\ell$ are called {\em reduced} and denoted with a bold symbol ${\bf w}=s_{i_1} \cdots s_{i_\ell}$. 
Denote by $\mathcal{R}(w)$ the set of all reduced expressions of $w$.
\medskip

The {\em right descent set} of $w \in W$ is
\begin{equation}\label{def:DesW}
\Des(w):=\{s \in S \mid \ell(w s)<\ell(w)\}.
\end{equation}

If ${\bf w}=s_{i_1} \cdots s_{i_\ell}$ is a reduced expression for $w$, then a reduced expression for $w^{-1}$ is given by $s_{i_\ell} \cdots s_{i_1}$. It follows that
\begin{equation}\label{def:inverseDesW}
\Des^L(w):=\{s \in S \mid \ell(sw)<\ell(w)\}=\Des(w^{-1}),
\end{equation}
known as the {\em left descent set} of $w$. 
\medskip

For $J \subseteq S$, denote by $W_J$ the {\em parabolic subgroup} of $W$ generated by $J$, and by  
$$W^J:=\{w \in W \mid \Des(w)\subseteq S \setminus J\},$$
the {\em set of minimal coset representatives}, or {\em quotient}. The next result is well known, see for example~\cite[Proposition 2.4.4]{BB}.

\begin{proposition}\label{prop:quotient}
For every $J \subseteq S$ the following holds.
\begin{itemize}
\item [(i)] Every $w \in W$ has a unique factorization $w=w^J \cdot w_J$ such that $w^J \in W^J$ and $w_J \in W_J$.
\item [(ii)] For this factorization $\ell(w)=\ell(w^J)+\ell(w_J)$.
\end{itemize}
\end{proposition}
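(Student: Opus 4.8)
The plan is to prove all three assertions through the geometric (root-system) realization of $(W,S)$, which converts length into a count of inverted positive roots and makes the interaction between $W^J$ and $W_J$ transparent. Recall that in this realization one has a root system $\Phi=\Phi^+\sqcup\Phi^-$ with simple roots $\{\alpha_s\}_{s\in S}$, that $\ell(w)=|\operatorname{Inv}(w)|$ where $\operatorname{Inv}(w):=\{\beta\in\Phi^+ \mid w(\beta)\in\Phi^-\}$, and that $\ell(ws)<\ell(w)$ if and only if $w(\alpha_s)\in\Phi^-$; in particular $w\in W^J$ precisely when $w(\alpha_s)\in\Phi^+$ for every $s\in J$. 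I will first produce the factorization, then isolate a single length-additivity lemma from which both the length formula (ii) and uniqueness follow formally.

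For existence, let $u$ be an element of minimal length in the coset $wW_J$ (such an element exists since lengths are non-negative integers). If $s\in J$, then $us\in wW_J$ and $\ell(us)\ne\ell(u)$, so minimality forces $\ell(us)>\ell(u)$, i.e.\ $s\notin\Des(u)$; as this holds for all $s\in J$ we get $u\in W^J$. Setting $v:=u^{-1}w\in W_J$ gives the desired factorization $w=u\cdot v$ with $u\in W^J$ and $v\in W_J$; we rename $u=w^J$ and $v=w_J$.

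The crux is the following length-additivity lemma: \emph{if $u\in W^J$ and $v\in W_J$, then $\ell(uv)=\ell(u)+\ell(v)$.} The first step is to show that $u\in W^J$ sends every positive root of the parabolic sub-root-system $\Phi_J$ (those lying in the non-negative span of $\{\alpha_s\}_{s\in J}$) to a positive root: writing such a root as $\beta=\sum_{s\in J}c_s\alpha_s$ with $c_s\ge 0$, its image $u(\beta)=\sum_{s\in J}c_s\,u(\alpha_s)$ is a non-negative combination of the positive roots $u(\alpha_s)$, hence a non-negative combination of simple roots; being a nonzero root, it must lie in $\Phi^+$. Consequently $\operatorname{Inv}(u)\cap\Phi_J^+=\emptyset$, so $\operatorname{Inv}(u)\subseteq\Phi^+\setminus\Phi_J^+$. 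The second step partitions $\Phi^+=\Phi_J^+\sqcup(\Phi^+\setminus\Phi_J^+)$ and uses that $v\in W_J$ preserves each block (a simple reflection $s_t$, $t\in J$, changes only the $\alpha_t$-coordinate, so it keeps the support outside $J$ and hence preserves $\Phi^+\setminus\Phi_J^+$, while it permutes $\Phi_J$). A direct inversion count then shows that every $\beta\in\operatorname{Inv}(v)\subseteq\Phi_J^+$ also lies in $\operatorname{Inv}(uv)$ (contributing $\ell(v)$), while on $\Phi^+\setminus\Phi_J^+$ the map $v$ is a bijection carrying exactly the preimages of $\operatorname{Inv}(u)$ into $\operatorname{Inv}(uv)$ (contributing $\ell(u)$); summing gives $\ell(uv)=\ell(u)+\ell(v)$. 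This lemma is precisely assertion (ii) for the factorization above.

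Uniqueness now follows formally. If $w=u_1v_1=u_2v_2$ with $u_i\in W^J$ and $v_i\in W_J$, put $y:=u_2^{-1}u_1=v_2v_1^{-1}\in W_J$. Applying the lemma to $u_1=u_2\,y$ and to $u_2=u_1\,y^{-1}$ gives $\ell(u_1)=\ell(u_2)+\ell(y)$ and $\ell(u_2)=\ell(u_1)+\ell(y)$; adding these forces $\ell(y)=0$, hence $y=e$, so $u_1=u_2$ and $v_1=v_2$. I expect the length-additivity lemma to be the main obstacle: its genuine content is that a quotient element produces no cancellation against a parabolic factor, and the cleanest justification of this no-length-drop phenomenon is the root-positivity argument above (equivalently, it can be extracted from the exchange/deletion condition, where one must rule out the case in which the cancelled generator comes from the $W^J$-part).
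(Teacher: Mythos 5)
Your proof is correct, but note that the paper itself gives no proof of this proposition: it is stated as well known, with a pointer to \cite[Proposition 2.4.4]{BB}. The argument given there is word-combinatorial: exactly as you do, one picks $u$ of minimal length in the coset $wW_J$, but the key length-additivity $\ell(uv)=\ell(u)+\ell(v)$ for $v\in W_J$ is then obtained from reduced words and the exchange/deletion condition, ruling out a cancellation reaching into the $u$-part because it would produce an element of $wW_J$ shorter than $u$ --- the very alternative you sketch in your closing sentence. Your route instead runs through the geometric representation: $\ell(w)=\#\operatorname{Inv}(w)$, the fact that an element of $W^J$ inverts no root of $\Phi_J^+$, and the $W_J$-invariance of the decomposition $\Phi^+=\Phi_J^+\sqcup(\Phi^+\setminus\Phi_J^+)$. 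This is equally standard (it is essentially the proof in Humphreys' book, cited in the paper as \cite{Hu}), it works verbatim for infinite $W$, and it has the advantage that the length formula and uniqueness both drop out formally from one cleanly stated lemma, whereas the word-combinatorial proof needs a separate (if short) uniqueness argument. Two small points to tighten, neither a genuine gap: first, in the inversion count, to get equality $\ell(uv)=\ell(u)+\ell(v)$ rather than merely $\ge$, you should state explicitly that a root $\beta\in\Phi_J^+$ with $v(\beta)\in\Phi^+$ contributes nothing, since then $v(\beta)\in\Phi_J^+$ (as $v$ permutes $\Phi_J$) and your Step 1 gives $uv(\beta)\in\Phi^+$; second, the uniqueness step silently uses $\ell(y^{-1})=\ell(y)$, which deserves a word.
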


The well-known {\em Matsumoto-Tits word property} ensures that any reduced expression of $w \in W$  can be obtained from any other using only braid relations (see for instance~\cite{Hu}). The concept of full commutativity is a strengthening of this property.

\begin{definition}
\label{defi:FC}
An element $w$ is \emph{fully commutative} (FC) if any reduced expression for $w$ can be obtained from any other one by using only commutation relations.
\end{definition}

The following characterization of FC elements, originally due to Stembridge, is particularly useful for checking  whether a given element is FC.  
\begin{prop} 
\cite[Prop. 2.1]{ST1}
\label{prop:caracterisation_fullycom}
An element $w\in W$ is fully commutative if and only if for all $s,t$ such that $3\leq m_{st}<\infty$, there is no reduced expression for $w$ that contains the factor $\underbrace{sts\cdots}_{m_{st}}$.
\end{prop}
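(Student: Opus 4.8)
The plan is to prove Proposition~\ref{prop:caracterisation_fullycom} by establishing both implications via the Matsumoto--Tits word property. For the contrapositive of the forward direction, suppose $w$ is fully commutative in the sense of Definition~\ref{defi:FC}; I want to show no reduced word for $w$ contains a factor $\underbrace{sts\cdots}_{m_{st}}$ with $3\le m_{st}<\infty$. If some reduced expression did contain such a factor, then applying the single braid relation $\underbrace{sts\cdots}_{m_{st}}=\underbrace{tst\cdots}_{m_{ts}}$ to that factor produces another reduced expression for $w$. The key observation is that this genuinely new reduced word cannot be reached from the original using commutation relations alone: commutation relations preserve, for each unordered pair $\{s,t\}$ with $m_{st}\ge 3$, the cyclic subword structure of the letters from $\{s,t\}$, whereas a nontrivial braid move on such a pair alters the relative order of $s$'s and $t$'s in a way no sequence of commutations can undo. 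Hence the two reduced expressions lie in different commutation classes, contradicting full commutativity.

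For the reverse direction I would argue the contrapositive as well: assume $w$ is \emph{not} fully commutative, so there exist two reduced expressions $\mathbf{w},\mathbf{w}'\in\mathcal{R}(w)$ that are not connected by commutation relations alone. By the Matsumoto--Tits word property they \emph{are} connected by a sequence of braid relations (commutation and non-commutation), so at least one step in any such path must use a genuine braid relation $\underbrace{sts\cdots}_{m_{st}}=\underbrace{tst\cdots}_{m_{ts}}$ with $3\le m_{st}<\infty$. I would then take a minimal-length such braid path and examine the first non-commutation braid step: the reduced expression to which it is applied is itself a reduced word for $w$, and it contains the factor $\underbrace{sts\cdots}_{m_{st}}$ by definition of the braid move being applicable. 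This exhibits a reduced expression for $w$ containing a forbidden factor, completing the contrapositive.

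The main obstacle, and the heart of the proof, is the claim in the forward direction that a nontrivial braid move cannot be simulated by commutations. The clean way to formalize this is to track the \emph{heap} or, more elementarily, the projection that records only the subsequence of letters lying in a fixed pair $\{s,t\}$ with $m_{st}\ge 3$: commutation relations either leave this projected subword unchanged (when commuting a letter past one not in $\{s,t\}$) or are simply inapplicable to adjacent $s,t$ (since $m_{st}\ge 3$ forbids $st=ts$), so the projected alternating word $sts\cdots$ or $tst\cdots$ is a commutation-class invariant. A braid move on $\{s,t\}$ changes this invariant, which proves the two classes are distinct. I would present this invariant argument carefully, as it is exactly the place where the hypothesis $m_{st}\ge 3$ is used and where one must rule out indirect commutation paths rather than only adjacent swaps.

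Since this is a well-known result of Stembridge, I would keep the exposition brief, citing~\cite{ST1} for the original and emphasizing the commutation-class invariant as the conceptual core; a fully detailed verification of the invariant's behavior under each elementary commutation is routine and can be relegated to a remark or omitted.
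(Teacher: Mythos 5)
Your proposal is correct, but note that the paper itself gives no proof of Proposition~\ref{prop:caracterisation_fullycom}: it is quoted verbatim from Stembridge \cite{ST1}, so there is no in-paper argument to compare against. Your two-directional argument is in fact essentially Stembridge's original one, and both halves are sound. For the forward direction, the invariant you isolate is exactly right: an elementary commutation moves a letter past a letter it commutes with, so it can never involve both $s$ and $t$ of a pair with $m_{st}\geq 3$, hence the subword obtained by restricting any expression to the letters $\{s,t\}$ is constant on each commutation class, while a long braid move visibly changes that subword; this forces the two reduced expressions into distinct commutation classes, contradicting full commutativity. For the converse, the Matsumoto--Tits property guarantees a braid path between two reduced words lying in different commutation classes, such a path must contain at least one non-commutation step, and the (reduced) word to which that step is applied exhibits the forbidden factor. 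Two cosmetic remarks: the phrase ``cyclic subword structure'' in your first paragraph is misleading --- the invariant is the ordinary linear projected subword, as you state correctly later --- and the minimality of the braid path in the converse is never used; any path containing a long braid step suffices.
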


We let $S^*$ be the free monoid generated by $S$. Define the following equivalence relation on $S^*$: two words $a,b\in S^*$ are equivalent if $b$ can be obtained from $a$ by a finite sequence of commutation relations. The equivalence classes of this relation   are usually called {\em commutation classes}. 
By definition, for a FC element $w$, the set $\mathcal{R}(w)$ of {\em reduced expressions} of $w$ forms a single commutation class ; we will see in Section~\ref{sec:heaps} that the concept of heap, as originally defined by Viennot~\cite{ViennotHeaps}, helps to capture the notion of full commutativity.

\subsection{The hyperoctahedral group}\label{sec:Bn} 
\smallskip

In this section we fix a positive integer $n \geq 2$. Recall that the Coxeter group of type $A_{n-1}$ is isomorphic to the symmetric group $S_n$, i.e. the group of bijections from the set $[n]$ onto itself. Similarly, the Coxeter group of type $B$ and rank $n$ may be realized as the {\em group of signed permutations} $B_n$, that is  the group of all bijections $w$ of the set $[\pm n]:=\{\pm1,\pm 2,\dots, \pm n\}$ onto itself such that
$$
w(-i) = -w(i)
$$
for every $1\le i \le n$, with composition as the group operation.
This group is also known as  the {\em hyperoctahedral
group} of rank $n$. We identify $S_{n}$ as a subgroup of $B_{n}$,
and $B_{n}$ as a subgroup of $S_{[\pm n]}$ in the natural ways.

If $w \in B_{n}$, we write $w = [w_{1}, \ldots, w_{n}]$ to
mean that $w(i)=w_{i}$ for $1\le i\le n$, and we set 
$$\Neg(w):=\{i \in [n] \mid w_i<0\},$$
the {\em negative set} of $w$.

As Coxeter generating set for $B_n$ we take $S:=\{s_i \mid \ 0\le i<n\}$, where
$s_0:=[-1,2,3,4,\dots,n]$ and, for $1\le i<n$, $s_i:=[1,\ldots,i-1,i+1,i,i+2,\ldots,n]$, see Figure~\ref{AB-types}, right.

It is well known, ~\cite[Proposition 8.1.2]{BB}, that by letting $w_0:=0$, the right descent set, defined in~\eqref{def:DesW}, is identified for a signed permutation $w$ to
the set of indices 
\begin{equation}\label{eq:DesB}
\Des_B(w) :=\{0\le i\le n-1 \mid  w_i>w_{i+1}\}. 
\end{equation}
Similarly, for a permutation $\pi \in S_n$ the right descent set is identified with the set 
\begin{equation}\label{eq:DesA}
\Des(\pi):=\{1\leq i\leq n-1 \mid \pi_i >\pi_{i+1}\}.
\end{equation}
For 
$w\in B_n$, let $\ldes(w)$ be the {\em maximal descent} in $\Des_B(w$); if the descent set is empty we set $\ldes(w):=0$.

\subsection{Chow's quasi-symmetric functions, domino tableaux and bi-tableaux}\label{sec:2.3}

For an infinite set of formal variables $x_1,x_2,\dots$, the {\it Gessel fundamental quasi-symmetric function} indexed by a set $J\subseteq [n-1]$ is defined as
\[
F_J(x_1,x_2,\ldots):=\sum\limits_{0<i_1\le i_2\le \cdots\le i_n \atop j\in J\Rightarrow i_j<i_{j+1}} x_{i_1}\cdots x_{i_n}.
\]
The descent set of a standard Young tableau $T$ of size $n$ is defined as
\[
\Des(T):=\{0<i<n \mid i+1\ \text{is in a lower row than}\ i\}.
\]
The above quasi-symmetric functions are related to the classical symmetric Schur functions by the following result.

\begin{theorem}\cite[Theorem 7.19.7]{EC2}\label{thm:schur}
For every partition $\lambda\vdash n$,
\[
\sum\limits_{T\in \SYT(\lambda)} F_{\Des(T)} = s_\lambda,
\]
where $s_\lambda=s_\lambda(x_1,x_2,\ldots)$ is the Schur function indexed by $\lambda$.
\end{theorem}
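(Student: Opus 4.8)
The plan is to deduce the identity from the combinatorial description of the Schur function as a generating function over semistandard Young tableaux. Recall that
\[
s_\lambda=\sum_{T\in \operatorname{SSYT}(\lambda)} x^T,\qquad x^T:=\prod_{i\ge 1}x_i^{\,m_i(T)},
\]
where $m_i(T)$ counts the entries equal to $i$ in $T$. The first step is to regroup the tableaux on the right according to their \emph{standardization}: for $T\in\operatorname{SSYT}(\lambda)$ with $n$ cells, let $Q=\operatorname{std}(T)\in\SYT(\lambda)$ be obtained by relabelling the entries $1,\dots,n$ in weakly increasing order, breaking ties among equal entries by increasing column (so that, within a block of equal entries, the leftmost cell receives the smallest label). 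A short check shows that $Q$ is indeed standard. The theorem then reduces to establishing, for each $Q\in\SYT(\lambda)$, the fibrewise identity
\[
\sum_{\substack{T\in\operatorname{SSYT}(\lambda)\\ \operatorname{std}(T)=Q}} x^T=F_{\Des(Q)},
\]
after which summing over all $Q\in\SYT(\lambda)$ yields the claim.

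To prove this fibrewise identity, I would exhibit an explicit weight-preserving bijection between the fibre $\operatorname{std}^{-1}(Q)$ and the set of monomials indexing $F_{\Des(Q)}$, namely the weakly increasing sequences $0<i_1\le i_2\le\cdots\le i_n$ subject to $i_j<i_{j+1}$ whenever $j\in\Des(Q)$. Given such a sequence, define a filling of $\lambda$ by placing the value $i_k$ in the cell of $Q$ that contains $k$; conversely, from $T$ in the fibre one reads off $i_k$ as the entry of $T$ in the cell where $Q$ has a $k$. By construction this filling has weight $x_{i_1}\cdots x_{i_n}$, matching the corresponding monomial of $F_{\Des(Q)}$, so the two maps are mutually inverse once they are shown to be well defined.

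The substantive step --- and the only one requiring genuine work --- is to verify that the filling $T$ built from an admissible sequence is semistandard, and that conversely the entries of a tableau in the fibre always yield an admissible sequence; the heart of the matter is that the descent set of $Q$ records \emph{exactly} the places where a strict increase is forced. Weak increase along rows is automatic, since two horizontally adjacent cells carry $Q$-values $a<b$ and hence $i_a\le i_b$. The delicate point is column strictness: for two cells in a single column, with $Q$-values $a<b$ and the $a$-cell directly above the $b$-cell, I must force $i_a<i_b$. I would reduce this to producing a descent $j\in\{a,a+1,\dots,b-1\}$ of $Q$, for then $i_a\le i_j<i_{j+1}\le i_b$. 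Such a descent must exist: were there none, each of $a+1,\dots,b$ would sit in a weakly higher row than its predecessor, forcing the cell of $b$ to lie weakly above that of $a$ and contradicting that it sits directly below it in the same column. The reverse implication --- that equal consecutive entries can occur only at non-descents, so no admissible monomial is missed or overcounted --- follows from the tie-breaking rule in the standardization, and is what guarantees the bijection is onto. Summing the resulting fibrewise identity over $Q\in\SYT(\lambda)$ completes the proof.
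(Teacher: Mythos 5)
Your proposal is correct. Note, however, that the paper does not prove this statement at all: it is quoted verbatim from Stanley's \emph{Enumerative Combinatorics} Vol.~2 (Theorem 7.19.7) and used as a known ingredient, so there is no internal proof to compare against. Your argument is the standard one: it is essentially the $(P,\omega)$-partition proof from EC2 unpacked into an explicit standardization bijection, and all the key verifications are present and sound --- in particular the existence of a descent $j\in\{a,\dots,b-1\}$ forcing column strictness (if no such descent existed, each of $a+1,\dots,b$ would lie in a weakly higher row than its predecessor, contradicting that $b$ sits directly below $a$), and the converse direction, where the left-to-right tie-breaking rule guarantees that equal consecutive values $i_j=i_{j+1}$ can occur only when $j\notin\Des(Q)$, so the fibre generating function is exactly $F_{\Des(Q)}$ with no omission or overcount. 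Summing over $Q\in\SYT(\lambda)$ then gives the identity, as you say.
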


This mechanism has been  extended to the framework of the hyperoctahedral group $B_n$ in two different ways. One approach
was 
introduced by Poirier~\cite{Po}, determining the signed quasi-symmetric functions, see also~\cite{Petersen, AAER}. 
In this paper we
follow the second approach, based on the work of Chow ~\cite{Chow},
which is relevant to our purposes. After defining Chow's type $B$ quasi-symmetric functions, we will recall the necessary background on tableaux used in this theory. 

\medskip


\begin{definition}\label{def:Chow_fundamental}
For an infinite set of formal variables $x_0,x_1,x_2,\dots$, {\em Chow's type $B$ fundamental quasi-symmetric function} indexed by $J\subseteq \{0\}\cup [n-1]$ is defined as
\[
F^B_J(x_0,x_1,\ldots):=\sum\limits_{0 \leq i_1\le i_2\le \cdots\le i_n\atop j\in J\Rightarrow i_j<i_{j+1}} x_{i_1}\cdots x_{i_n},
\]
where $i_0:=0$.
\end{definition}

\begin{example}
For $n=3$, we have 
$F^B_{\{1,2\}}=\displaystyle{\sum_{0\le i<j<k} x_ix_jx_k}$, and $F^B_{\{0,2\}}=\displaystyle{\sum_{1\le i \le j < k} x_ix_jx_k}$.
\end{example}

The Chow type $B$ fundamental quasi-symmetric functions are intimately related to domino tableaux.

\begin{definition} Let $\lambda \vdash 2n$ be a partition.  
\begin{itemize} 
\item[1.]  
A {\em standard domino
tableau} of shape $\lambda$ consists of a tiling of the Young diagram
of $\lambda$ by dominoes which are labelled  
by 
$1,2,\ldots,n$, 
such that the entries are strictly increasing along rows when read from left to right and along columns
when read from top to bottom.
Denote by $P^0(n)$ the set of partitions $\lambda\vdash 2n$ that can be filled by dominoes and by $\DSYT(\lambda)$
the set of 
standard domino tableaux of shape $\lambda$.

\item[2.] If the dominoes are labelled by non-negative integers, and entries  
are 
weakly increasing along the rows 
and strictly increasing along the columns,  
the domino tableau is {\em semi-standard}.
Denote 
 by $\SSDT(\lambda)$ the set of semi-standard domino tableaux of shape $\lambda$,
 which satisfy  the following additional condition: if the upper leftmost domino is vertical then it cannot be labelled by $0$.
The {\em content} of a semi-standard domino tableau $\sT$ is defined to be $w(\sT)=(\mu_0,\mu_1,\dots)$ where for each $i$, $\mu_i$ is the number of appearances of the number $i$ in $\sT$. 
\end{itemize}
\end{definition}




Domino tableaux will be denoted in serif mode (for instance $\sT$) to distinguish them from classical tableaux and bi-tableaux (for instance $T$). 

Generating functions for domino tableaux, or {\it domino functions} are well studied, see e.g.~\cite{KLLT}. Here we use a modified version due to Mayorova and Vassilieva~\cite{MV2}.


\begin{definition}\label{def:domino_function}
Let $\lambda \in P^0(n)$. The {\em domino function} of $\lambda$ is the generating function
$$\mathcal{G}_{\lambda}({\bf x}):=\sum\limits_{\sT \in \SSDT(\lambda)}{\bf x}^{w(\sT)},$$
where ${\bf x}^{w(\sT)}:=\prod_{i\geq 0} x_i^{\mu_i}$.
\end{definition}

The {\em standard descent set} of a standard  domino tableau $\sT$ consists of all letters $1\le i< n$, such that the northeast  cell filled by $i+1$ is in a lower row than the northeast cell filled by $i$. 
Denote the letter in the $(i,j)$ cell of $\sT$ by $\sT_{i,j}$.
The {\em type $B$ descent set} of a standard domino tableau $\sT$ of size $n$ is defined as
\[
\Des_B(\sT):=\begin{cases}\Des(\sT)\sqcup\{0\} & \mbox{if} \; \sT_{2,1}=1,\\
\Des(\sT) & \mbox{if} \; \sT_{1,2}=1.
\end{cases}
\]

\begin{example}
Here are two domino tableaux $\sT,\sP\in \DSYT(4,4,2)$
\[
\sT = \ytableaushort{1133, 2455, 24}\ 
 \  \ \ \ , \ \ \ \ \ \ 
\sP=  \ytableaushort{1225, 1335, 44}\ 
\]
with descent sets $\Des(\sT)=\Des_B(\sT)=\{1,3\}$ and $\Des(\sP)=\{2,3\}\subsetneq \Des_B(\sP)=\{0,2,3\}.$ 
\end{example}



The following type $B$ analogue of Theorem~\ref{thm:schur} holds.

\begin{prop}\cite[Prop. 3.9]{MV2}\label{prop:domino-chow}
For every partition $\lambda \in P^0(n)$, 
$$\sum\limits_{\sT\in \DSYT(\lambda)}F^B_{\Des_B(\sT)}=\mathcal{G}_{\lambda}.$$
\end{prop}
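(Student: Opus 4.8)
The goal is to prove Proposition~\ref{prop:domino-chow}, namely that for every $\lambda\in P^0(n)$ one has
\[
\sum_{\sT\in \DSYT(\lambda)} F^B_{\Des_B(\sT)} = \mathcal{G}_\lambda = \sum_{\sT\in \SSDT(\lambda)} {\bf x}^{w(\sT)}.
\]
This is the exact type $B$ analogue of Theorem~\ref{thm:schur}, and the plan is to mimic the classical proof of that result (the standardization argument that expresses a monomial-generating function for semistandard tableaux as a fundamental-quasisymmetric sum over standard tableaux), but carried out in the domino setting with Chow's fundamentals $F^B_J$ in place of Gessel's $F_J$.

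First I would expand the left-hand side using the definition of $F^B_J$. Each term $F^B_{\Des_B(\sT)}$ is a sum of monomials $x_{i_1}\cdots x_{i_n}$ over weakly increasing sequences $0\le i_1\le\cdots\le i_n$ that strictly increase exactly across the positions dictated by $\Des_B(\sT)$ (with the convention $i_0:=0$, which is precisely what forces the role of the $0$-descent). So the whole left-hand side becomes a sum of monomials indexed by pairs $(\sT,(i_1,\dots,i_n))$, where $\sT$ is a \emph{standard} domino tableau of shape $\lambda$ and the integer sequence is compatible with $\Des_B(\sT)$ in the above sense. The right-hand side, on the other hand, is a sum of monomials ${\bf x}^{w(\sS)}$ over \emph{semi-standard} domino tableaux $\sS$ of shape $\lambda$. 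The heart of the proof is to set up a bijection between these two families of monomials that preserves the monomial being summed.

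The key step is a \emph{standardization} map in the domino world. Given a semistandard domino tableau $\sS\in\SSDT(\lambda)$ with content $w(\sS)=(\mu_0,\mu_1,\dots)$, one reads off the nonnegative labels in weakly increasing order and relabels the dominoes $1,\dots,n$ by the usual rule: among dominoes carrying equal labels, break ties by reading position (this must be set up so that the result is a legal standard domino tableau, i.e. strictly increasing along rows and columns). This produces a standard domino tableau $\sT=\mathrm{std}(\sS)$ together with the recorded weakly increasing sequence $0\le i_1\le\cdots\le i_n$ of the original labels. I would then verify the two compatibility facts that make this a content-preserving bijection: (a) the recorded sequence satisfies the strict-increase condition exactly at the positions in $\Des_B(\sT)$, so that the monomial ${\bf x}^{w(\sS)}=x_{i_1}\cdots x_{i_n}$ appears in $F^B_{\Des_B(\sT)}$; and (b) the map is reversible, i.e. from a standard $\sT$ and a compatible sequence one uniquely reconstructs the semistandard $\sS$. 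The subtle point where the type $B$ structure enters is the behavior at $0$: the extra condition in the definition of $\SSDT(\lambda)$ (a vertical upper-leftmost domino cannot carry the label $0$) together with the convention $i_0:=0$ must match exactly the two cases in the definition of $\Des_B(\sT)$ (whether $\sT_{2,1}=1$ or $\sT_{1,2}=1$), so that $0\in\Des_B(\sT)$ if and only if the reconstruction forces $i_1>0$.

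The main obstacle I anticipate is precisely this compatibility at the $0$-descent: one must check carefully that the forbidden-label condition on vertical initial dominoes in $\SSDT$ is the correct semistandard shadow of the $0$-descent convention, so that the standardization bijection respects whether or not $0$ is a descent. The ordinary (type $A$) standardization argument handles descents $1,\dots,n-1$ routinely by the row-versus-column placement of consecutive entries, and the domino analogue of that—using the northeast-cell rule defining $\Des(\sT)$—goes through with only notational changes. So I would isolate the $0$-descent as a separate small lemma, verify it by examining the two shapes of the initial domino, and then combine it with the standard type $A$ standardization to finish. Once the bijection is established, summing the preserved monomials over both sides yields the claimed identity immediately.
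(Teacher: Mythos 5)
This proposition is not proved in the paper at all: it is imported verbatim from Mayorova--Vassilieva~\cite{MV2} (their Proposition 3.9), so there is no internal argument to compare yours against. Judged on its own, your plan is the right one, and it is essentially how the result is established in the cited source: the standardization map sending $\sS\in\SSDT(\lambda)$ to the pair $\bigl(\mathrm{std}(\sS),(i_1\le\cdots\le i_n)\bigr)$, where the sequence records the original labels, is a content-preserving bijection onto pairs consisting of a tableau $\sT\in\DSYT(\lambda)$ and a sequence compatible with $\Des_B(\sT)$; and the crux you single out is indeed the crux --- the $\SSDT$ condition forbidding a $0$ on a vertical upper-leftmost domino is exactly what matches the convention $i_0:=0$ in Definition~\ref{def:Chow_fundamental}, since $0\in\Des_B(\sT)$ precisely when the domino numbered $1$ is vertical, and in that case the reconstruction must force $i_1>0$.

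One piece of wording would, if taken literally, break the argument. Twice you describe the monomials of $F^B_{\Des_B(\sT)}$ as those whose index sequences ``strictly increase exactly across the positions dictated by $\Des_B(\sT)$''. That describes the monomial-type expansion, not the fundamental one: by Definition~\ref{def:Chow_fundamental}, $F^B_J$ sums over weakly increasing sequences that increase strictly \emph{at least} at the positions of $J$, and strict increases elsewhere are allowed. They also genuinely occur on the semi-standard side: for $\lambda=(4)$, the tableau with two horizontal dominoes labelled $1$ and $3$ standardizes to a tableau with $\Des_B=\emptyset$, yet its content word $(1,3)$ increases strictly at position $1$. Under the ``exactly'' reading this semi-standard tableau would have no partner, and the two sides of the identity would not match. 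Since your steps (a) and (b) only ever use the correct requirement (``so that the monomial appears in $F^B_{\Des_B(\sT)}$''), this is a slip of language rather than of structure; fix it, spell out the standardization lemma you flag (equal-labelled dominoes form a horizontal strip, numbered left to right, and the northeast-cell descent rule is exactly the obstruction to equal labels on consecutive dominoes), and your proof is complete and coincides with the one in~\cite{MV2}.
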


\medskip

Recall the hook formula for the number of domino tableaux of given shape. Denote $f^\lambda:=\#\SYT(\lambda)$ and
$f_2^\lambda:=\#\DSYT(\lambda)$. Let $[\lambda]$ be the Young diagram of shape $\lambda$ and $h_{i,j}$ be the {\em hook length} of the cell $(i,j)\in [\lambda]$, that is the number of cells in the $i$-th row and $j$-th column minus $i+j$.

\smallskip

\begin{theorem}~\cite[Theorem 14.9.18]{AR}
For every partition $\lambda \in P^0(n)$ 
\begin{equation}\label{domino-hook}
f_2^\lambda=\frac{n!}{\prod\limits_{c\in [\lambda]\atop  h_c \; \text{is even}}\frac{h_c}{2}}. 
\end{equation}
\end{theorem}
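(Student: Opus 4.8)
The plan is to reduce the statement to the ordinary hook length formula through the classical theory of $2$-cores and $2$-quotients. Since $\lambda \in P^0(n)$ can be tiled by dominoes, its $2$-core is empty, so $\lambda$ determines a $2$-quotient $(\lambda^{(0)}, \lambda^{(1)})$ with $|\lambda^{(0)}| + |\lambda^{(1)}| = n$. The whole argument rests on two ingredients: a bijective count of $\DSYT(\lambda)$ in terms of the quotient, and a length-halving correspondence between the even hooks of $\lambda$ and the hooks of the quotient.

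First I would establish the enumeration
\[
f_2^\lambda = \binom{n}{|\lambda^{(0)}|}\, f^{\lambda^{(0)}}\, f^{\lambda^{(1)}}.
\]
This follows from the standard bijection sending a standard domino tableau $\sT$ of shape $\lambda$ to a pair $(T^{(0)}, T^{(1)}) \in \SYT(\lambda^{(0)})\times\SYT(\lambda^{(1)})$ together with the record of which of the labels $1,\dots,n$ land in each quotient: removing the domino labeled $k$ corresponds to removing a single cell from exactly one of the two quotient shapes, the relative order of the labels within each quotient is encoded by $T^{(0)}$ and $T^{(1)}$, and the interleaving of the two label sets is a shuffle counted by the binomial coefficient. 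Applying the ordinary hook length formula $f^{\lambda^{(i)}} = |\lambda^{(i)}|!\,/\prod_{c \in [\lambda^{(i)}]} h_c$ to both factors, the factorials telescope against the binomial coefficient to give
\[
f_2^\lambda = \frac{n!}{\prod_{c \in [\lambda^{(0)}]} h_c \;\prod_{c \in [\lambda^{(1)}]} h_c}.
\]

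It then remains to identify this denominator with $\prod_{c \in [\lambda],\, h_c \text{ even}} (h_c/2)$. For this I would pass to the beta-number (abacus) description of $\lambda$: the hooks of $\lambda$ are in bijection with pairs $(\beta, x)$ consisting of a bead $\beta$ and an empty position $x < \beta$ in the first-column hook set, the associated hook length being $\beta - x$. Distributing the beads onto a two-runner abacus by parity splits the bead set according to $\beta \bmod 2$ and produces precisely the beta-sets of $\lambda^{(0)}$ and $\lambda^{(1)}$. A hook of $\lambda$ has even length exactly when its bead and gap lie on the same runner, and such same-runner pairs are in bijection with the hooks of the corresponding quotient partition, with the hook length divided by $2$. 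Hence $\prod_{c:\,h_c \text{ even}} (h_c/2)$ equals the product of all hook lengths of $\lambda^{(0)}$ and $\lambda^{(1)}$, which completes the proof.

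The main obstacle is the parity-matching step: one must verify carefully that the two-runner abacus splits the hook pairs of $\lambda$ so that the even-length hooks correspond bijectively, and with exact halving of lengths, to the hooks of the $2$-quotient. The enumeration in the first step is routine once the $2$-quotient bijection is in place, but checking that the hook correspondence respects both the parity condition and the factor of two simultaneously is the delicate point on which the clean cancellation in the formula depends.
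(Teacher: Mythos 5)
Your argument is correct, but there is no proof in the paper to compare it against: the paper only quotes this hook formula from the reference \cite[Theorem 14.9.18]{AR} and then uses it (in Corollary~\ref{cor:degree}); it never proves it. What you have written is essentially the standard proof of this classical fact, and both of your ingredients are sound. The Littlewood/Stanton--White correspondence (the same $2$-quotient machinery the paper recalls in Section~\ref{sec:Littlewood} via the Carr\'e--Leclerc bijection, with the quotient written there as $(\lambda^-,\lambda^+)$) gives
\begin{equation*}
f_2^\lambda=\binom{n}{|\lambda^{(0)}|}\,f^{\lambda^{(0)}}\,f^{\lambda^{(1)}},
\end{equation*}
and the two-runner abacus argument --- hooks of $\lambda$ as (bead, gap) pairs, even hooks as same-runner pairs, same-runner pairs as hooks of the quotient at half the length --- is exactly the classical lemma (James--Kerber, or Macdonald, Ch.~I, Ex.~8) that makes the denominators match. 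The ``delicate point'' you flag is real but your parity observation handles it: $\beta-x$ is even precisely when $\beta\equiv x \pmod 2$, i.e.\ when bead and gap sit on the same runner, and dividing all positions on a runner by $2$ turns its (bead, gap) pairs into the hook pairs of the corresponding quotient partition. A side benefit of your route is worth noting: your intermediate product formula immediately yields the two evaluations that the paper extracts from the hook formula in Corollary~\ref{cor:degree}, namely $f_2^{(2n-k,k)}=\binom{n}{\lfloor k/2\rfloor}$ (by Remark~\ref{rem:bijections} the quotient consists of two one-row shapes, so both $f$-factors equal $1$) and $f_2^{(2n-2k,2k-1,1)}=f^{(n-k,k)}$ (the quotient is $(\emptyset,(n-k,k))$), with no case analysis on parities of hook lengths.
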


\begin{corollary}\label{cor:degree} For every $n\ge 0$, we have:
\begin{eqnarray*}
\sum_{k=0}^n \left(f_2^{(2n-k,k)}\right)^2&=&\binom{2n}{n};\\
\sum_{k=1}^{\lfloor n/2\rfloor} \left(f_2^{(2n-2k,2k-1,1)}\right)^2&=&\frac{1}{n+1}\binom{2n}{n}-1.
\end{eqnarray*}
\end{corollary}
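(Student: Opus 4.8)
The plan is to derive both identities from the hook-length formula \eqref{domino-hook} by reducing them to well-understood generating-function identities. For the first sum, I would compute each $f_2^{(2n-k,k)}$ explicitly. The shape $(2n-k,k)$ is a two-row partition that must be tileable by dominoes, so I would first determine for which $k$ the partition lies in $P^0(n)$ and evaluate the product $\prod_{h_c \text{ even}} (h_c/2)$ over its cells. The hook lengths of a two-row shape are elementary, and after simplification I expect $f_2^{(2n-k,k)}$ to equal a binomial coefficient, plausibly $\binom{n}{k}$ or $\binom{n}{\lfloor k/2\rfloor}$-type expression. Once $\left(f_2^{(2n-k,k)}\right)^2$ is identified as a squared binomial coefficient, the claimed identity $\sum_k \binom{n}{k}^2 = \binom{2n}{n}$ is the classical Vandermonde convolution, so the first line follows immediately.

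For the second identity, I would apply the same strategy to the three-row shapes $(2n-2k,2k-1,1)$. These are the shapes with a specific near-two-row form (a long first row, a shorter second row of odd length $2k-1$, and a single cell in the third row). Again I would compute the hook lengths and isolate the even ones to evaluate $f_2^{(2n-2k,2k-1,1)}$ via \eqref{domino-hook}, obtaining a closed form. The target value $\frac{1}{n+1}\binom{2n}{n} - 1$ strongly suggests a connection to Catalan numbers $C_n = \frac{1}{n+1}\binom{2n}{n}$, so I anticipate that the sum of squares $\sum_k \left(f_2^{(2n-2k,2k-1,1)}\right)^2$ telescopes or matches a known Catalan identity after accounting for the $-1$ correction term (likely corresponding to an excluded or degenerate shape).

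A cleaner alternative, which I would pursue in parallel, is to avoid case-by-case hook computations by invoking a dimension/RSK-type count. The quantity $\sum_\lambda (f_2^\lambda)^2$ over all $\lambda \in P^0(n)$ of a fixed number of rows should count pairs of domino tableaux of common shape, which by a domino analogue of the Robinson--Schensted correspondence enumerates a natural combinatorial set (signed permutations or colored involutions) of known cardinality. If the two-row shapes $(2n-k,k)$ correspond under such a bijection to a family counted by $\binom{2n}{n}$, and the three-row shapes $(2n-2k,2k-1,1)$ to a family counted by $C_n - 1$, both identities would follow structurally. I would likely present whichever route is shorter, but keep the RSK interpretation as conceptual justification.

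The main obstacle I expect is the bookkeeping in the hook-length evaluation: correctly identifying which cells have even hook length in each shape, handling the parity constraints that determine membership in $P^0(n)$, and cleanly simplifying the resulting products into binomial coefficients. The three-row case is the delicate one, since the interplay between the row of odd length $2k-1$ and the single bottom cell affects the even-hook product in a way that must be tracked carefully; a small parity error there would throw off the Catalan matching. I would verify the closed forms for $f_2^{(2n-k,k)}$ and $f_2^{(2n-2k,2k-1,1)}$ on small values of $n$ before summing, to guard against such errors.
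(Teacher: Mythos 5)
Your primary route is essentially the paper's own proof: the hook formula \eqref{domino-hook} gives $f_2^{(2n-k,k)}=\binom{n}{\lfloor k/2\rfloor}$, so the first sum equals $\sum_{k=0}^n\binom{n}{\lfloor k/2\rfloor}^2=\sum_{k=0}^n\binom{n}{k}^2=\binom{2n}{n}$ (the re-indexing uses binomial symmetry, a small step you should not skip), and comparing \eqref{domino-hook} with the SYT hook formula yields $f_2^{(2n-2k,2k-1,1)}=f^{(n-k,k)}$, whence the second sum is $\sum_{k=1}^{\lfloor n/2\rfloor}\bigl(f^{(n-k,k)}\bigr)^2=C_n-1$, the $-1$ being exactly the excluded $k=0$ term, with the Catalan value coming from RSK on $321$-avoiding permutations --- precisely the mechanism you anticipated.

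One caution: do not present your ``cleaner alternative.'' Identifying which signed permutations map under the domino (Barbash--Vogan) correspondence to two-row shapes, respectively to shapes $(2n-2k,2k-1,1)$, is exactly the content of Theorem~\ref{thm:cells} and Corollary~\ref{cor:shapesLP}, which the paper proves \emph{later} and whose proof uses this very corollary as a counting ingredient; invoking that correspondence here would be circular in the paper's logical structure, and proving it independently is far harder than the hook-length computation.
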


\begin{proof}
From \eqref{domino-hook}, by considering the parity of $k$, we derive
$$f_2^{(2n-k,k)}=\binom{n}{\lfloor{k/2\rfloor}},$$
and the first formula follows by using 
$$\sum_{k=0}^n \binom{n}{\lfloor{k/2\rfloor}}^2=\sum_{k=0}^n \binom{n}{k}^2.$$
Comparison of~\eqref{domino-hook} with the hook formula for SYT~\cite[Theorem 14.5.3]{AR} yields 
$$f_2^{(2n-2k,2k-1,1)}=f^{(n-k,k)},$$
for every $n\ge 2 k\ge 2$. Recall that the {\em Catalan number} $C_n:=\frac{1}{n+1}\binom{2n}{n}$ counts the set of $321$-avoiding permutations in $S_n$, which in turn corresponds via RSK to the set of pairs of tableaux of the same shape $\lambda \vdash n$ containing at most $2$ rows, see \cite[Corollary 7.23.12]{EC2}.  
Therefore $\sum_{k=0}^{\lfloor n/2\rfloor} (f^{(n-k,k)})^2=C_n$, and the second equation follows.
\end{proof}

\bigskip

A family of skew shapes which plays an important role in the type $B$ theory is the following.
A {\it bi-shape} $(\lambda^-,\lambda^+)\vdash n$ is a pair of partitions of total size $n$.
We draw the bi-shape $(\lambda^-,\lambda^+)$ so that the southwest corner of the
component of shape $\lambda^+$ is identified with the northeast corner of
the component of shape $\lambda^-$.
Denote {\em the set of standard Young tableaux of bi-shape} $(\lambda^-,\lambda^+)$ by 
$\BSYT(\lambda^-,\lambda^+)$.
For $T\in \BSYT(\lambda^-,\lambda^+)$,
let $T_{\lambda^-}$, $T_{\lambda^+}$ be the components of $T$ of shape $\lambda^-$ and $\lambda^+$ respectively.

The {\em standard descent set} of a standard Young bi-tableau $T$ of size $n$ is defined as
\begin{equation}\label{eq:bi_st}
\Des(T):=\{0<i<n \mid i+1\ \text{is in a lower row than}\ i\},
\end{equation}
while the {\em type $B$ descent set} of a standard Young bi-tableau $T$ of size $n$ is defined as
\begin{equation}\label{eq:bi_desB}
\Des_B(T):=\begin{cases}\Des(T)\sqcup\{0\} & \mbox{if } 1 \in T_{\lambda^-},\\
\Des(T) & \mbox{if } 1 \in T_{\lambda^+}.
\end{cases}
\end{equation}

\begin{example}
Here are two standard bi-tableau of shape $((2),(2,1))$ 
with descent sets $\Des(T)=\Des_B(T)=\{1,3\}$ and $\Des(P)=\{2,3\}\subsetneq \Des_B(P)=\{0,2,3\}$:
\[
T = \ytableaushort{\none\none 13,\none\none 2, 45}\ \ 
 \  \ \ \ , \ \ \ \ \ \ 
P=  \ytableaushort{\none\none 25,\none\none 3, 14}\ \ \ .  \]

\end{example}

\bigskip

\subsection{From domino tableaux to bi-tableaux}\label{sec:Littlewood}

There exists a well-known  bijection from (semi)-standard domino tableaux
of shape $\lambda\in P^0(n)$, to bi-(semi)-standard-Young tableaux of corresponding bi-shape $(\lambda^-,\lambda^+)\vdash n $, due to Carr\'e--Leclerc, see~\cite[Algorithm 6.1]{Carre-Leclerc}.
The bijection associates each 
(semi)-standard domino tableau $\sT$ of shape $\lambda$
with a pair $(T^-,T^+)$ of (semi)-standard Young tableaux of shapes $(\lambda^-,\lambda^+)$. The tableaux $T^-$ and $T^+$ are constructed as follows : assign to each (single) box of $\sT$ a sign $-$ or $+$ such that the upper leftmost box is assigned  a $-$ and two adjacent boxes have opposite signs. The  component $T^-$ (resp. $T^+)$ is then obtained from the sub-tableau of $\sT$ composed of the dominoes with upper rightmost box filled with $-$ (resp. $+$). 

Note that the resulting shapes $(\lambda^-,\lambda^+)$ only depend on the shape $\lambda$, and they correspond to the {\em 2-quotient} obtained from $\lambda$ by the {\em Littlewood decomposition} (see~\cite[page 468]{EC2} or~\cite[Chapter 14.9]{AR}); in this case the $2$-core of $\lambda$ is empty. 
We will denote this particular case of the Littlewood decomposition by $\psi(\lambda):=(\lambda^-,\lambda^+)$.

\begin{example}
Let $\sT$ be the following domino tableau: 

\[
\begin{tikzpicture}[cap=round,line width=0.2pt,scale=0.5]
\draw[-](0,0)--(4,0);
\draw[-](0,-1)--(4,-1);
\draw[-](0,-2)--(2,-2);
\draw[-](0,-3)--(2,-3);
\draw[-](0,0)--(0,-3);
\draw[-](1,0)--(1,-3);
\draw[-](2,0)--(2,-3);
\draw[-](3,0)--(3,-1);
\draw[-](4,0)--(4,-1);
\draw[-](2,0)--(2,-3);

\node(1a) at (0.4,-0.4) {$1$};
\node (1b) at (0.4,-1.4) {$1$};
\node(2a) at (1.4,-0.4) {$2$};
\node (2b) at (1.4,-1.4) {$2$};
\node(3a) at (0.4,-2.4) {$3$};
\node (3b) at (1.4,-2.4) {$3$};
\node(4a) at (2.4,-0.4) {$4$};
\node (4b) at (3.4,-0.4) {$4$};
\end{tikzpicture}
\]

We assign the $\pm$ signs to get the following:

\[
\begin{tikzpicture}[cap=round,line width=0.2pt,scale=0.5]
\draw[-](0,0)--(4,0);
\draw[-](0,-1)--(4,-1);
\draw[-](0,-2)--(2,-2);
\draw[-](0,-3)--(2,-3);
\draw[-](0,0)--(0,-3);
\draw[-](1,0)--(1,-3);
\draw[-](2,0)--(2,-3);
\draw[-](3,0)--(3,-1);
\draw[-](4,0)--(4,-1);
\draw[-](2,0)--(2,-3);
\node(1a) at (0.4,-0.4) {$1$};
\node(1am) at (0.7,-0.3) {$-$};
\node (1b) at (0.4,-1.4) {$1$};
\node(1bp) at (0.8,-1.3) {$+$};

\node(2a) at (1.4,-0.4) {$2$};
\node(2ap) at (1.8,-0.4) {$+$};

\node (2b) at (1.4,-1.4) {$2$};
\node(2bm) at (1.8,-1.4) {$-$};

\node(3a) at (0.4,-2.4) {$3$};
\node(3am) at (0.8,-2.3) {$-$};

\node (3b) at (1.4,-2.4) {$3$};
\node(3bp) at (1.8,-2.3) {$+$};
\node(4a) at (2.4,-0.4) {$4$};
\node(4am) at (2.8,-0.4) {$-$};

\node (4b) at (3.4,-0.4) {$4$};
\node(4ap) at (3.8,-0.4) {$+$};
\end{tikzpicture}
\]

According to the algorithm, the corresponding standard Young bi-tableau is :

\[
\begin{ytableau}
\none & {2} & {4}\\
\none & {3} \\
  {1} 
\end{ytableau}
\]
\end{example}

\smallskip

Consider the case of semi-standard domino tableaux. 
Note that the condition 
that $0$ must not occupy a vertical upper leftmost domino implies that in the semi-standard bi-tableau associated with a domino semi-standard tableau, $0$ will not appear in the lower component.  
Now, the  Carr\'e-Leclerc bijection
from semi-standard domino tableaux to semi-standard bi-tableaux
is content-preserving,
implying that 
\[
\sum\limits_{\sT\in \SSDT(\lambda)}{\bf x}^{w(\sT)}=s_{\lambda^-}(x_1,x_2,\dots)s_{\lambda^+}(x_0,x_1,\dots).
\]
Thus, by Proposition~\ref{prop:domino-chow}, we derive the following.

\begin{proposition}\cite[Prop. 3.13]{MV2}\label{thm:schurB}
For every $\lambda\in P^0(n)$, 
\[
\sum\limits_{\sT \in \DSYT(\lambda)} F^B_{\Des_B(\sT)}(x_0,x_1,\ldots) = s_{\lambda^-}(x_1,x_2,\ldots)\ s_{\lambda^{+}}(x_0,x_1,x_2,\ldots).
\]
\end{proposition}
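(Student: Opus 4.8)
The plan is to chain together the two ingredients established immediately above the statement: Proposition~\ref{prop:domino-chow}, which identifies the sum of Chow's fundamental quasi-symmetric functions over \emph{standard} domino tableaux with the domino function $\mathcal{G}_\lambda$, and the content-preserving Carr\'e--Leclerc bijection, which rewrites $\mathcal{G}_\lambda$ as a product of two Schur functions. In other words, I would read off the result by composing the identities
\[
\sum_{\sT \in \DSYT(\lambda)} F^B_{\Des_B(\sT)} = \mathcal{G}_\lambda = \sum_{\sT \in \SSDT(\lambda)} {\bf x}^{w(\sT)} = s_{\lambda^-}(x_1,x_2,\ldots)\, s_{\lambda^+}(x_0,x_1,\ldots),
\]
where the first equality is Proposition~\ref{prop:domino-chow}, the second is the definition of the domino function, and the third is the content-preserving property of $\psi$.

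The only step that requires genuine care is the last equality. Here I would unpack the Carr\'e--Leclerc correspondence $\psi(\lambda)=(\lambda^-,\lambda^+)$: it sends a semi-standard domino tableau $\sT$ to a pair $(T^-,T^+)$ of semi-standard Young tableaux of shapes $\lambda^-$ and $\lambda^+$, and it preserves the multiset of entries. Consequently the content $w(\sT)$ is the concatenation of the contents of $T^-$ and $T^+$, so that ${\bf x}^{w(\sT)}$ factors, and the sum over $\SSDT(\lambda)$ splits as a product of the two semi-standard tableau generating functions, each of which is by definition a Schur function in its alphabet. The delicate point is to pin down the alphabets correctly: the upper component may use $x_0$ whereas the lower component may not. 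This asymmetry is forced by the extra condition built into the definition of $\SSDT(\lambda)$ — that a vertical upper leftmost domino cannot be labeled $0$ — which, as noted in the preceding discussion, is precisely equivalent under $\psi$ to the requirement that $0$ never occurs in $T^-$. Hence $T^-$ ranges over semi-standard tableaux in $x_1,x_2,\ldots$ and $T^+$ over semi-standard tableaux in $x_0,x_1,\ldots$, yielding $s_{\lambda^-}(x_1,x_2,\ldots)$ and $s_{\lambda^+}(x_0,x_1,\ldots)$ respectively.

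With the factorization of $\mathcal{G}_\lambda$ in hand, the proof closes by substituting it into Proposition~\ref{prop:domino-chow}. I expect the main obstacle to be exactly the bookkeeping of these two alphabets: one must verify that the shift in variable range is genuinely dictated by the semi-standard constraint on the upper leftmost domino, and is compatible with the Carr\'e--Leclerc sign assignment (upper leftmost box is $-$, adjacent boxes alternate), rather than being an arbitrary convention. Once this is checked, everything else is a formal assembly of the two cited results, and no further combinatorial analysis of descent sets is needed, since the passage from standard objects to the monomial-content generating function over semi-standard objects is already absorbed into Proposition~\ref{prop:domino-chow}.
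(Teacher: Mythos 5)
Your proposal is correct and follows essentially the same route as the paper: the paper likewise obtains the identity by combining Proposition~\ref{prop:domino-chow} with the definition of $\mathcal{G}_\lambda$ and the content-preserving Carr\'e--Leclerc bijection, observing that the constraint on a vertical upper leftmost domino labelled $0$ translates exactly into $0$ being barred from the lower component $T^-$, which produces the two different alphabets $s_{\lambda^-}(x_1,x_2,\ldots)$ and $s_{\lambda^+}(x_0,x_1,\ldots)$. The point you flag as delicate (the alphabet bookkeeping under the sign assignment) is indeed the only non-formal step, and your treatment of it matches the paper's.
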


Comparing Proposition~\ref{thm:schurB} with~\cite[Prop. 4.2]{AAER}, Mayorova and Vassilieva deduce the following result.

\begin{lemma}\cite[Lemma 3]{MV4}\label{lemma:MV}
For every $\lambda\in P^0(n)$  
there exists an implicit $\Des_B$-preserving bijection from 
the set of standard domino tableaux of shape $\lambda\vdash 2n$ to the set of standard
bi-tableaux of bi-shape $(\lambda^-,\lambda^+)$.
\end{lemma}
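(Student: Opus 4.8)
The plan is to deduce Lemma~\ref{lemma:MV} by combining the two generating-function identities that have just been established, namely Proposition~\ref{thm:schurB} and the cited \cite[Prop.\ 4.2]{AAER}, rather than by constructing an explicit bijection by hand. Proposition~\ref{thm:schurB} tells us that
\[
\sum\limits_{\sT \in \DSYT(\lambda)} F^B_{\Des_B(\sT)} = s_{\lambda^-}(x_1,x_2,\ldots)\, s_{\lambda^{+}}(x_0,x_1,\ldots),
\]
and \cite[Prop.\ 4.2]{AAER} presumably gives the analogous expansion for standard bi-tableaux of bi-shape $(\lambda^-,\lambda^+)$, that is
\[
\sum\limits_{T \in \BSYT(\lambda^-,\lambda^+)} F^B_{\Des_B(T)} = s_{\lambda^-}(x_1,x_2,\ldots)\, s_{\lambda^{+}}(x_0,x_1,\ldots).
\]
Since the two sums equal the same quasi-symmetric function, they are equal as formal power series, so the two multisets $\{\Des_B(\sT) : \sT \in \DSYT(\lambda)\}$ and $\{\Des_B(T) : T \in \BSYT(\lambda^-,\lambda^+)\}$ coincide.

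First I would verify that Chow's fundamental quasi-symmetric functions $\{F^B_J : J \subseteq \{0\}\cup[n-1]\}$ are linearly independent in the ring $\cQSym$ (this is standard, as they form a basis of the degree-$n$ component). Linear independence is the crucial input: it upgrades the equality of the two sums from an equation in a power-series ring to an equation of formal linear combinations of the basis elements $F^B_J$. Matching coefficients of each $F^B_J$ then yields
\[
\#\{\sT \in \DSYT(\lambda) : \Des_B(\sT) = J\} = \#\{T \in \BSYT(\lambda^-,\lambda^+) : \Des_B(T) = J\}
\]
for every admissible descent set $J$. Once these cardinalities agree set-by-set, any choice of bijections within each fixed-descent fiber assembles into a global $\Des_B$-preserving bijection between $\DSYT(\lambda)$ and $\BSYT(\lambda^-,\lambda^+)$, which is exactly the assertion. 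This is why the statement of the lemma calls the bijection \emph{implicit}: it is produced from an equality of cardinalities rather than from an algorithm.

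The main obstacle is not really a mathematical difficulty but a bookkeeping one: I must confirm that the two propositions being compared genuinely use the \emph{same} indexing convention for the type $B$ descent set and the \emph{same} normalization of Chow's quasi-symmetric functions, so that the coefficient comparison is legitimate. In particular I would check that the bi-shape $(\lambda^-,\lambda^+)$ on the bi-tableau side is indeed the $2$-quotient $\psi(\lambda)$ arising from the Littlewood decomposition used in Proposition~\ref{thm:schurB}, and not its reverse or some conjugate; a mismatch here would swap the roles of $\lambda^-$ and $\lambda^+$ and produce the wrong Schur product $s_{\lambda^+}(x_1,\ldots)\,s_{\lambda^-}(x_0,\ldots)$ instead. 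Assuming the conventions align (which the excerpt's careful definitions of $\Des_B$ for both object types strongly suggest), the proof is then immediate: equate the two Schur-product expressions, invoke linear independence of the $F^B_J$, and read off the fiberwise equality of cardinalities.
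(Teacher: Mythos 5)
Your proposal is correct and is exactly the argument the paper (following Mayorova--Vassilieva) intends: the lemma is deduced by comparing Proposition~\ref{thm:schurB} with \cite[Prop.\ 4.2]{AAER}, equating the two expansions of $s_{\lambda^-}(x_1,x_2,\ldots)\,s_{\lambda^+}(x_0,x_1,\ldots)$, and invoking the linear independence of Chow's fundamental basis $\{F^B_J\}$ to match the $\Des_B$-fibers cardinality-wise, which is precisely why the bijection is only ``implicit.'' Your caveat about checking that both sources use the same $2$-quotient convention $\psi(\lambda)=(\lambda^-,\lambda^+)$ is the right thing to verify and is consistent with the paper's setup in Section~\ref{sec:Littlewood}.
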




\medskip

In particular, we have the following remark that will be used in Section~\ref{sec:proofMain}.

\begin{remark}\label{rem:bijections}
The 2-quotient of the domino shape $\lambda=(2n-k,k)$
is the bi-shape $((k/2),(n-k/2))$ if $k$ is even, and $((n-(k-1)/2),((k-1)/2))$ if $k$ is odd.  
The 2-quotient of the domino shape $\lambda=(2n-2k,2k-1,1)$ is $(\emptyset,(n-k,k))$. 
By Lemma~\ref{lemma:MV}, 
there exist $\Des_B$-preserving maps
\begin{itemize}
    \item[(1)] 
from $\DSYT(2n-2k,2k)$ to  
$\BSYT((k), (n-k))$ for $0\leq k \leq \lfloor n/2 \rfloor$; 
\item[(2)]from  $\DSYT(2n-2k-1,2k+1)$ to $\BSYT((n-k),(k))$, for $1\leq k \leq \lfloor (n-1)/2 \rfloor$; 
\item[(3)] 
from  $\DSYT(2n-2k,2k-1,1)$ to $\BSYT(\emptyset,(n-k,k)))$, for $1\leq k \leq \lfloor n/2 \rfloor$.
\end{itemize}
Note that the Carr\'e-Leclerc bijection is not $\Des_B$-preserving in general,
but it is for domino shapes of the form  $\lambda=(2n-2k,2k-1,1)$.
\end{remark}

\section{Heaps and full commutativity}~\label{sec:heaps}

\subsection{Types A and B}~\label{sec:heapAB}

We briefly describe a way to define the above mentioned heaps and their relation with full commutativity, for more details see for instance~\cite{BJN} and the references cited there.

Let $(W,S)$ be a Coxeter system, and fix a word $\mathbf{w}=s_{a_1}\cdots s_{a_l}$ in $S^*$. Define a partial ordering $\prec$ on the index set $\{1,\ldots, l\}$ as follows: set $i\prec j$ if $i<j$ and $s_{a_i}$, $s_{a_j}$ do not commute, and extend by transitivity. We denote by $\H({\mathbf{w}})$ this poset together with a labeling map $\epsilon:i\mapsto s_{a_i}$. Heaps are well-defined up to commutation classes~\cite{ViennotHeaps}, that is, if ${\bf w}$ and ${\bf w'}$ are two reduced expressions for $w \in W$, that are in the same commutation class, then the corresponding labeled heaps are isomorphic. Therefore, when $w$ is FC  we can define $\H(w):=\H(\mathbf{w})$, where ${\bf w}$ is any reduced expression for $w$. Another important feature in heaps theory is that the linear extensions of  $\H(w)$ are in bjiection with the reduced expressions of $w$, see \cite[Proposition 2.2]{ST1}.

\begin{example}
Consider $w=[4,1,5,2,3] \in \FC(A_4)=\FC(S_5)$. Its heap is represented in Figure~\ref{examplesA1}, left. In the Hasse diagram of $\H(w)$, elements with the same labels are drawn in the same column. We recall that each vertex is labeled by the corresponding generator, but we do not write those labels for visibility reasons.  

Its set of reduced expressions $\mathcal{R}(w)=\{s_3s_2s_1s_4s_3, s_3s_2s_4s_1s_3, s_3s_2s_4s_3s_1, s_3s_4s_2s_3s_1, s_3s_4s_2s_1s_3 \}$ is obtained by listing the labels of each linear extension of $\H(w)$.  
\end{example}

\begin{figure}[!ht]
\begin{center}
\includegraphics[scale=0.7]{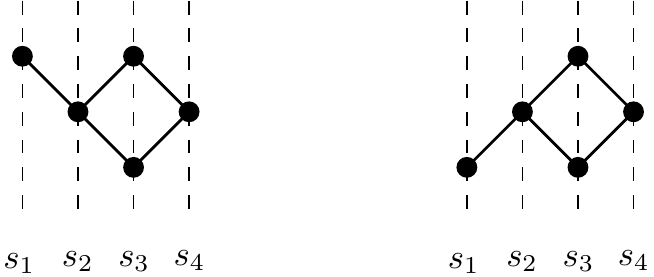}
\caption{Two FC heaps of type $A_4$.}\label{examplesA1}  
\end{center}
\end{figure}

Given a heap $H=\H(w)$ for $w \in \FC(W)$ and a subset $I\subset S$, we denote by $H_{I}$ the sub-poset induced by all elements of $H$ with labels in $I$.

\begin{definition}
\label{defi:alternating}
Let $(W,S)$ be a Coxeter system, $w \in \FC(W)$, and $H:=\H(w)$. We say that $H$ is {\em alternating} if for each non commuting generators $s,t$ in $S$, the chain $H_{\{s,t\}}$ has alternating labels $s$ and $t$ from bottom to top.
\end{definition}

Note that if $\H(w)$ is alternating, then any reduced expression $\mathbf{w}$ of $w$ is \emph{alternating} in the sense that for each  non commuting generators  $s,t \in S$, the occurrences of $s$ alternate with those of $t$ in $\mathbf{w}$. In this case we say that $w \in \FC(W)$ is alternating.
\medskip

We now recall the descriptions of FC heaps corresponding to the Dynkin diagrams of types $A_{n-1}$ and $B_n$ which were given  for instance in~\cite{BJN}. 

\begin{proposition}[Classification of FC heaps in type $A_{n-1}$]\label{heaps-typeA}
An element $w \in A_{n-1}$ is fully commutative if and only if $\H(w)$ is alternating. More precisely, in $\H(w)$, 
\begin{itemize}
\item[(a)] There is at most one occurrence of $s_1$ ({\em resp.} $s_{n-1}$);
\item[(b)] For each $i \in \{1,\ldots, n-2\}$, the elements with labels $s_i, s_{i+1}$ form an alternating chain. 
\end{itemize}
\end{proposition}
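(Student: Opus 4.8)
The plan is to deduce the statement from Stembridge's criterion (Proposition~\ref{prop:caracterisation_fullycom}) after translating it into heap language. In type $A_{n-1}$ the only braid words of length $m_{st}=3$ are $s_i s_{i+1} s_i = s_{i+1} s_i s_{i+1}$, so the criterion reads: $w$ is FC if and only if no reduced expression of $w$ contains a factor $s_i s_{i+1}s_i$ for some $1\le i\le n-2$. Two facts recalled in the excerpt will be used repeatedly: reduced expressions of $w$ are exactly the label sequences of the linear extensions of $\H(w)$, and, directly from the definition of $\prec$, two pieces carrying non-commuting labels are always comparable; hence for adjacent generators the induced subposet $H_{\{s_i,s_{i+1}\}}$ is a chain, so condition (b) is meaningful. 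I read the headline ``$\H(w)$ is alternating'' as condition (b), and I will prove the precise equivalence $w\in\FC(A_{n-1})\iff (a)\wedge(b)$.

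The combinatorial heart is a dictionary between braid factors and heaps: some reduced expression of $w$ contains the factor $s_i s_{i+1}s_i$ if and only if $\H(w)$ contains a \emph{convex} chain $a\prec c\prec b$ with labels $s_i,s_{i+1},s_i$, where convexity means that the open interval $(a,b)$ equals $\{c\}$. Indeed a convex chain can be brought to three consecutive positions in a suitable linear extension, and conversely the three pieces underlying a braid factor occupy consecutive word positions and so have empty interval apart from the middle one. Thus $w$ is FC if and only if $\H(w)$ has no convex chain labelled $s_i s_{i+1}s_i$ nor $s_{i+1}s_i s_{i+1}$, and the task becomes showing that the absence of such convex chains is equivalent to (a) and (b).

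For the implication $(a)\wedge(b)\Rightarrow\FC$ I would rule out convex braid chains. Such a chain has a repeated outer label $s_i$; let $a\prec b$ be its two occurrences, which are consecutive occurrences of $s_i$ since only the middle piece lies between them. If $s_i$ is an \emph{interior} generator, then applying (b) to the adjacent pair on the side opposite to the braid forces a piece labelled by the far neighbour $s_{i-1}$ or $s_{i+1}$ to lie strictly between $a$ and $b$; this is a second element of the interval $(a,b)$, so the triple is not convex. If $s_i$ is a \emph{boundary} generator $s_1$ or $s_{n-1}$, it has a single non-commuting neighbour and no such wedge can exist, so a convex braid would be unavoidable once $s_i$ occurs twice; but this is exactly what (a) forbids. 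Hence no convex braid chain occurs and $w$ is FC.

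For the converse $\FC\Rightarrow(a)\wedge(b)$ I would argue contrapositively. If (a) fails, two occurrences of a boundary generator together with the neighbour forced between them by the alternation of the unique adjacent chain form a triple which, lacking a second neighbour to wedge it, is convex; this is a braid factor, so $w$ is not FC. If (b) fails, some chain $H_{\{s_j,s_{j+1}\}}$ carries two consecutive equal labels; these two pieces are not heap-adjacent, for otherwise a linear extension would place them side by side and $w$ would not be reduced, so the separating pieces, none of which is labelled $s_j$ or $s_{j+1}$, must be analysed and pushed out of the interval to expose an actual braid. This last step is the main obstacle: the convexity bookkeeping needed to turn a non-alternating $\{s_j,s_{j+1}\}$-chain into a genuine braid factor, in which the interior-versus-boundary dichotomy and the alternation of the neighbouring chains have to be combined carefully.
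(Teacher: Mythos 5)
Your reduction of the problem to Stembridge's criterion (Proposition~\ref{prop:caracterisation_fullycom}) via the dictionary \emph{braid factor in some reduced expression $\iff$ convex chain $a\prec c\prec b$ with labels $s_i,s_{i+1},s_i$} is sound, and your reading of the statement as $\FC\iff(a)\wedge(b)$ is the right one (indeed it is forced: the heap of $s_1s_2s_1$ has an alternating chain $H_{\{s_1,s_2\}}$ in the sense of Definition~\ref{defi:alternating}, yet $[3,2,1]$ is not FC, so condition (a) cannot be dropped). Your argument for $(a)\wedge(b)\Rightarrow\FC$ is essentially complete. But the proposal has a genuine gap, and you name it yourself: the implication $\FC\Rightarrow(b)$ --- that a heap whose chain $H_{\{s_j,s_{j+1}\}}$ carries two consecutive equal labels forces a braid factor --- is never proved; you only describe it as ``the main obstacle''. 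This is the core of the classification, and it is not routine, because pieces in the interval between the two equal-labelled elements need not carry labels adjacent to $s_j$, and the braid one eventually exposes may involve a \emph{different} pair of generators than $\{s_j,s_{j+1}\}$. The missing argument is an induction on the generator index. If $a\prec b$ are consecutive occurrences of $s_j$ in $H_{\{s_j,s_{j+1}\}}$, the open interval $(a,b)$ is nonempty (otherwise some linear extension produces the factor $s_js_j$, contradicting reducedness) and contains no piece labelled $s_j$ or $s_{j+1}$; every minimal element of $(a,b)$ covers $a$, hence carries a label not commuting with $s_j$, so all of them are labelled $s_{j-1}$ --- in particular $j\ge 2$, which gives the base case: two consecutive occurrences of $s_1$ are impossible in the heap of a reduced word. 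If $(a,b)$ contains exactly one piece labelled $s_{j-1}$, that piece is the unique minimal and the unique maximal element of $(a,b)$, so $(a,b)$ is a single point and one finds the convex braid $s_js_{j-1}s_j$; if it contains at least two, then $H_{\{s_{j-1},s_j\}}$ has two consecutive labels $s_{j-1}$ and one recurses with $j-1$ in place of $j$ (symmetrically, two consecutive occurrences of $s_{j+1}$ push the index up towards $s_{n-1}$). Without this descent/ascent argument the classification is not established.

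Two secondary points. First, your treatment of $\neg(a)$ invokes ``the neighbour forced between them by the alternation of the unique adjacent chain'', i.e.\ it assumes $(b)$; this is legitimate only after the case $\neg(b)$ has been disposed of --- precisely the step left open --- and the claim that the resulting triple, ``lacking a second neighbour to wedge it, is convex'' also requires the unique-cover/cocover argument above rather than the mere absence of wedges, since a priori the interval could contain pieces with labels far from $s_1$. Second, for the comparison you were asked about: the paper offers no proof of this proposition at all; it is recalled from~\cite{BJN}. So your plan of attack (Stembridge's criterion plus convex chains) is the natural and standard one, but as it stands your text is an incomplete proof rather than an alternative one.
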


As already mentioned, such elements are in bijection with $321$-avoiding permutations in $S_n$, that are counted by the Catalan number $C_n$. In Figure~\ref{examplesA}, the heaps of three FC elements of type $A$ are depicted.

\begin{figure}[!ht]
\begin{center}
\includegraphics[scale=0.7]{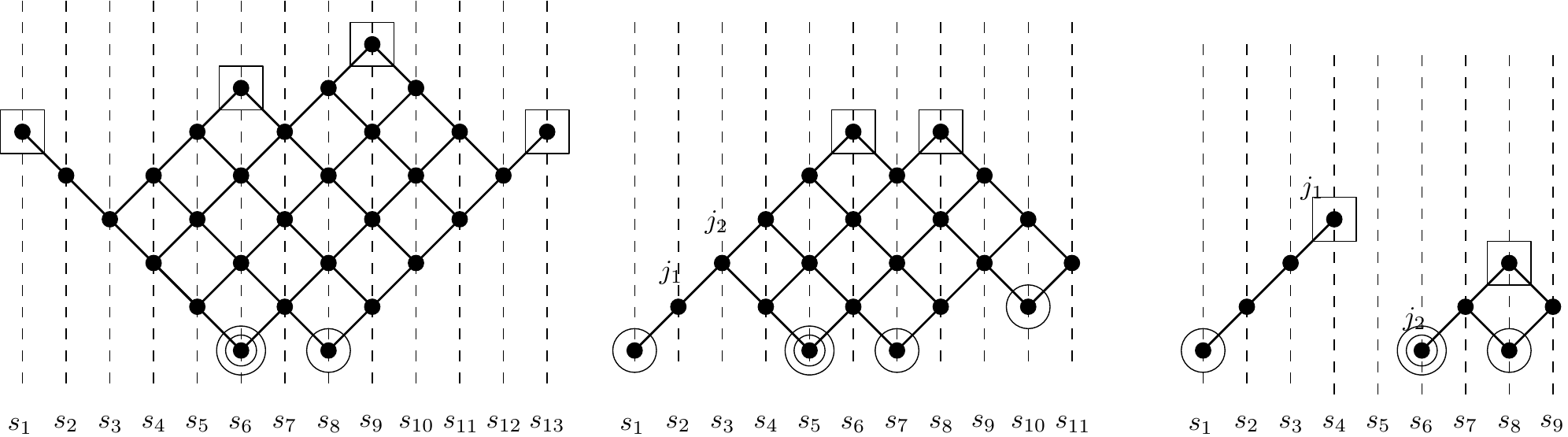}
\caption{Three alternating heaps of type $A$.}\label{examplesA}  
\end{center}
\end{figure}

Now we will need the following second family of heaps, which, in addition to alternating heaps, describes FC heaps of type $B$ (see, e.g.~\cite{BJN}).

\begin{definition}\label{defi:LP}
A {\em left-peak}, associated with the Dynkin diagram of type $B_n$,  is a heap  such that there exists a unique $j \in\{1,\dots,n-1\}$ satisfying:
\begin{enumerate}
\item[(a)]  $\H_{\{s_0, \ldots, s_j\}}=\H(s_j\cdots s_{1}s_0 s_{1} \cdots s_j)$;
\item[(b)] $\H_{\{s_{j}, s_{j+1}\}}=s_j s_j$ or $s_{j+1}s_js_js_{j+1}$ for $j<n-1$, and $s_{n-1}s_{n-1}$ for $j=n-1$; 
\item[(c)]  $\H_{\{\hat{s}_{j},s_{j+1}\ldots,s_{n-1} \}}$ is alternating, where $\hat{s}_j$ means that one occurrence of $s_j$ is deleted.
\end{enumerate}
\end{definition}

An element $w \in \FC(B_n)$ for which $\H(w)$ is a left-peak will also be called a {\it left-peak element}.  

\begin{example}
In Figure~\ref{examplesB-uncolored}, left, there is an example of an alternating heap of type $B$: note that in contrast to the type $A$ case (having at most one vertex labeled $s_1$), it can have any finite number (between $0$ and $n$) of vertices labeled $s_0$. 
In the left-peak of Figure~\ref{examplesB-uncolored}, right, we have $j=2$.
\end{example}

\begin{figure}[!ht]
\begin{center}
\includegraphics[scale=0.7]{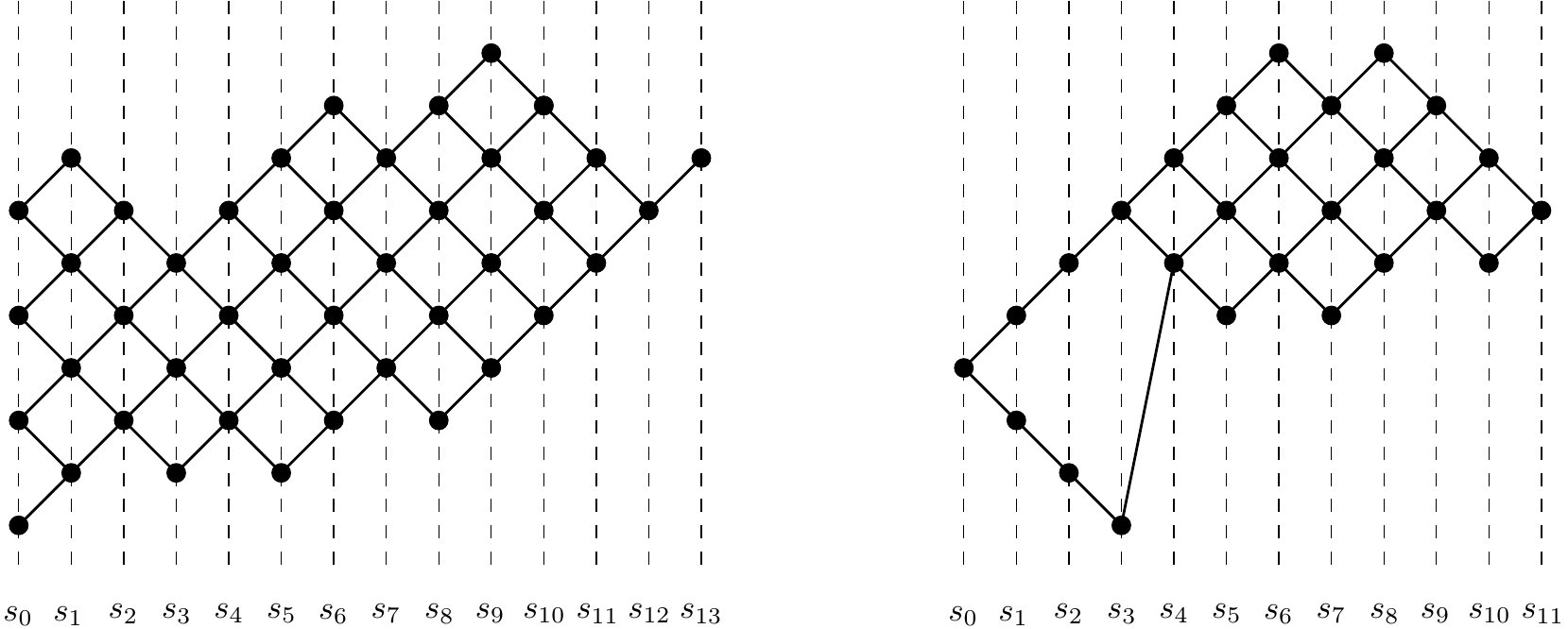}
\caption{Left: an alternating heap of type $B_{14}$. Right: a left-peak of type $B_{12}$}\label{examplesB-uncolored}  
\end{center}
\end{figure}

From  \cite[Theorem 3.10 and Section~4.4]{BJN}, we have the following result.

\begin{proposition}[Classification of FC heaps in type $B_n$]\label{heaps-typeB}
An element $w \in B_{n}$ is fully commutative if and only if $\H(w)$ is either an alternating heap or  a left-peak.  
\end{proposition}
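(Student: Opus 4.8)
The plan is to derive the whole statement from Stembridge's local criterion (Proposition~\ref{prop:caracterisation_fullycom}), read off the heap. For a non-commuting pair $\{s,t\}\subseteq S$ the induced subposet $H_{\{s,t\}}$ is totally ordered: two occurrences of the same generator are comparable, every occurrence of $s$ is comparable to every occurrence of $t$, and transitivity does the rest; so $H_{\{s,t\}}$ may be read as a word in $s,t$ from bottom to top. Since the reduced words of $w$ are exactly the linear extensions of $\H(w)$, the criterion reads: $w\in\FC(B_n)$ if and only if none of these chains contains the braid factor $\underbrace{sts\cdots}_{m_{st}}$. In type $B_n$ the relevant pairs are $\{s_i,s_{i+1}\}$ with $m_{i,i+1}=3$ for $1\le i\le n-2$ and $m_{01}=4$; thus $H_{\{s_i,s_{i+1}\}}$ with $i\ge 1$ must avoid $s_is_{i+1}s_i$ and $s_{i+1}s_is_{i+1}$, while $H_{\{s_0,s_1\}}$ need only avoid the length-$4$ words $s_0s_1s_0s_1$ and $s_1s_0s_1s_0$. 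This asymmetry at the node $s_0$ is the source of the left-peak phenomenon.

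For the implication that alternating heaps and left-peaks are fully commutative, I would read off the chains dictated by Definitions~\ref{defi:alternating} and~\ref{defi:LP} and apply Proposition~\ref{prop:caracterisation_fullycom}. In an alternating heap each $H_{\{s,t\}}$ is an alternating word stopping short of the length-$m_{st}$ braid, so no braid factor appears. In a left-peak with parameter $j$ the only non-alternating chains are internal to the peak, namely $H_{\{s_i,s_{i+1}\}}=s_{i+1}s_is_is_{i+1}$ for $1\le i<j$ together with the top chain of condition (b); one checks directly that $s_{i+1}s_is_is_{i+1}$ contains neither $s_is_{i+1}s_i$ nor $s_{i+1}s_is_{i+1}$, while $H_{\{s_0,s_1\}}=s_1s_0s_1$ is harmless precisely because $m_{01}=4$. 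The outer chains are alternating by condition (c), so $w$ is fully commutative.

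The substance is the converse. Assume $w\in\FC(B_n)$. If every chain $H_{\{s_i,s_{i+1}\}}$ alternates, then $\H(w)$ is alternating and we are done. Otherwise some chain carries two consecutive equal labels, which I call a \emph{double}. The key is a downward recursion. A double $s_is_i$ (for $i\ge 1$) consists of two occurrences of $s_i$ with no $s_{i+1}$ between them; since the word is reduced some non-commuting generator lies between them in $\H(w)$, and it cannot be $s_{i+1}$, so it is $s_{i-1}$. Looking at $H_{\{s_{i-1},s_i\}}$, the two $s_i$'s sandwich at least one $s_{i-1}$; were there exactly one the chain would read $s_is_{i-1}s_i$, a forbidden braid factor, so full commutativity forces a nested double $s_{i-1}s_{i-1}$ between the two $s_i$'s. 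Iterating, the doubles descend through $s_{i-1},s_{i-2},\ldots,s_1$; a bottom double $s_1s_1$ forces a single $s_0$ between the two $s_1$'s (a second $s_0$ there would itself need a separating $s_1$), and the recursion halts because $s_1s_0s_1$ is not a braid factor when $m_{01}=4$. This produces exactly the peak $H_{\{s_0,\dots,s_i\}}=\H(s_i\cdots s_1s_0s_1\cdots s_i)$ of condition (a).

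It remains to organize these peaks into a single left-peak. I would first note that $w$ then contains exactly one occurrence of $s_0$: the peak already contributes $s_1s_0s_1$ to $H_{\{s_0,s_1\}}$, and any further $s_0$ would lengthen that chain to $s_0s_1s_0s_1$ or $s_1s_0s_1s_0$, a forbidden braid. A single $s_0$ forces a single peak, so the set of levels carrying a double is an initial segment $\{1,\ldots,j\}$ for a unique top index $j$, giving condition (a) and the uniqueness of $j$. By maximality of $j$ no chain above level $j$ has a double, so $H_{\{\hat{s}_j,s_{j+1},\ldots,s_{n-1}\}}$ is alternating (condition (c)); and the only ways the two top occurrences of $s_j$ can be flanked by $s_{j+1}$ without creating a higher double are $s_js_j$ and $s_{j+1}s_js_js_{j+1}$ (condition (b)). Hence $\H(w)$ is a left-peak. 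The main obstacle is exactly this bookkeeping in the converse: making the nested recursion and its termination at $s_0$ precise, ruling out a second defect via the single-$s_0$ constraint, and checking that the interface between the peak and the outer alternating region is forced into the forms of conditions (b) and (c) rather than some other configuration.
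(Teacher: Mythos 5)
Your foundational step is incorrect, and the error is load-bearing. Stembridge's criterion (Proposition~\ref{prop:caracterisation_fullycom}) forbids the braid word from occurring as a factor of some \emph{reduced expression}; in heap terms this means there is no \emph{convex} chain of $m_{st}$ elements with alternating labels $s,t$ --- convex in the whole poset $\H(w)$, i.e.\ with no element of any other label lying strictly between its members. It is \emph{not} equivalent to the absence of a braid factor in the word read off the totally ordered subposet $H_{\{s,t\}}$, which is the criterion you use throughout. A counterexample sits inside the paper's own results: $\mu=(s_0)(s_1s_0)(s_2s_1s_0)=[-3,-2,-1]$ lies in $(B_3)^J$, hence is fully commutative by~\eqref{eq:FC-equality}, yet its chain $H_{\{s_0,s_1\}}$ reads $s_0s_1s_0s_1s_0$ and so contains the braid factor $s_0s_1s_0s_1$; the occurrence is harmless only because the interleaved $s_2$ makes those four elements non-convex, so no linear extension ever brings them together. (Similarly, $s_1s_2s_0s_1=[3,-2,1]$ is FC while $H_{\{s_1,s_2\}}$ reads $s_1s_2s_1$.) Your criterion would declare both elements non-FC.

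This defect breaks both directions. In the forward direction, your claim that in an alternating heap each chain stops ``short of the length-$m_{st}$ braid'' is not part of Definition~\ref{defi:alternating} and is false: the FC element above is alternating with an $H_{\{s_0,s_1\}}$-chain of length $5$. Conversely, the heap of the reduced word $s_0s_1s_0s_1=[-1,-2]$ \emph{is} alternating in the sense of Definition~\ref{defi:alternating} but is not FC, so ``alternating $\Rightarrow$ FC'' cannot be proved at all without the boundary conditions (e.g.\ at most one occurrence of $s_{n-1}$) that belong to the precise classification in~\cite{BJN} and which your proof never invokes; the genuine argument must show that every braid-length alternating \emph{factor} of a chain fails to be \emph{convex}, using alternation of the neighboring chains. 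In the converse, the exactly-one-$s_0$ step (``any further $s_0$ would lengthen the chain to $s_0s_1s_0s_1$, a forbidden braid'') fails for the same reason: a longer chain is forbidden only if convex, and ruling out a second $s_0$ actually requires a second, upward-propagating recursion (doubles forced at levels $2,3,\dots$ up to $s_{n-1}$, where reducedness yields the contradiction), which you do not supply. Your downward nested-double recursion is the right idea and is repairable by convexity arguments, but as written the proof establishes false statements. Note finally that the paper itself offers no proof of Proposition~\ref{heaps-typeB}; it cites \cite[Theorem 3.10 and Section 4.4]{BJN}, where precisely these convexity and boundary issues are handled.
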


\begin{remark}\label{number of alternatings and LP}
The set of alternating FC elements in $B_n$ coincides with the set of FC top elements defined in Theorem~4.1 of~\cite{ST3}, while the set of FC left peak elements is exactly the set of FC bottom elements which are not top elements. Therefore by~\cite[Theorem~5.9]{ST3}, the number of alternating FC elements is $\binom{2n}{n}$, and the number of FC left peaks is $C_n-1$. Thus  the total number of FC elements in $B_n$ is $$\frac{n+2}{n+1}\binom{2n}{n}-1.$$
\end{remark}

Stembridge provided a characterization of FC elements in  $B_n$ by using pattern avoidance: as for FC$(S_n)$ these elements are 321-avoiding but they also have to avoid other patterns. 

\begin{proposition}\cite[Theorem 5.1]{ST3}\label{prop:fc1} 
A signed permutation $w\in B_n$ is fully commutative if and only if
$w$ avoids the pattern $[-1,-2]$ and all patterns $[a, b, c]$ such that $|a| > b > c$ or $-b > |a| > c$.
\end{proposition}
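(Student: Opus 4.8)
The plan is to deduce the pattern characterization from the reduced-word criterion of Proposition~\ref{prop:caracterisation_fullycom}. For the Coxeter system of type $B_n$ the only pairs $s,t$ with $3\le m_{st}<\infty$ are $(s_i,s_{i+1})$ with $m_{st}=3$ for $1\le i\le n-2$, and $(s_0,s_1)$ with $m_{st}=4$. Hence $w$ fails to be fully commutative precisely when some reduced word for $w$ contains one of the factors $s_is_{i+1}s_i$ $(1\le i\le n-2)$ or $s_0s_1s_0s_1$. The whole problem is therefore to translate the presence of these two families of braid factors into the presence of the signed patterns listed in the statement.

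To carry out the translation I would pass to the faithful embedding $B_n\hookrightarrow S_{2n}$ obtained by letting $w$ act on $[\pm n]$ and relabelling $-n<\cdots<-1<1<\cdots<n$ as $1<\cdots<2n$; the image consists of the centrally symmetric permutations $\bar w$ satisfying $\bar w(2n+1-j)=2n+1-\bar w(j)$. Under this embedding the central transposition $(n,n+1)$ is the image of $s_0$, while for $i\ge 1$ the generator $s_i$ maps to the commuting product $(n-i,\,n-i+1)(n+i,\,n+i+1)$ of two transpositions, neither of which touches the central pair. Consequently a strand of the wiring diagram of $\bar w$ never crosses the central axis except through an application of $s_0$. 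In this picture a braid factor $s_is_{i+1}s_i$ becomes a pair of mirror-image short braids, one on each side of the axis, while $s_0s_1s_0s_1$ becomes the longest element of the central $B_2$-parabolic, whose one-line form is exactly $[-1,-2]$.

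With this dictionary in place the proof splits into the two implications, both handled through the standard fact that three pairwise-crossing wires in a reduced diagram can always be brought, by commutations, into a consecutive short-braid configuration, and conversely. For necessity I would show that non-full-commutativity is monotone under signed-pattern containment: from a reduced word containing a forbidden factor I extract the two or three strands realizing the corresponding mutually crossing triangle (passing through the axis when $s_0$ is involved) and read off the signed values and positions they occupy in $w$. A case analysis on how the offending strands sit relative to the central axis then yields exactly the three-term patterns with $|a|>b>c$ (a triple crossing confined to one side, mirrored on the other), those with $-b>|a|>c$ (a triple crossing entangled with a passage through the axis), and the two-term pattern $[-1,-2]$ (a single pair crossing the axis twice). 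For sufficiency I would run the correspondence backwards: an occurrence of $[-1,-2]$ flattens to the central $B_2$ and builds a reduced word through its longest element, hence through $s_0s_1s_0s_1$; an occurrence of a listed length-three pattern flattens to the appropriate copy of $B_2$ or $S_3$ inside $B_n$ and produces three mutually crossing strands, forcing one of the braid factors. The non-circular core of both directions is that flattening a signed permutation to a chosen subset of positions carries reduced words to reduced words and braid factors to braid factors.

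The step I expect to be the main obstacle is precisely this sign bookkeeping: verifying that the list of length-three patterns is both complete and non-redundant. Concretely, one must check that every way in which a third strand can interact with a crossing passing through the central axis produces a configuration order-isomorphic to one of $|a|>b>c$ or $-b>|a|>c$ and to nothing else, and that these, together with $[-1,-2]$, exhaust the minimal non-fully-commutative signed permutations. This is most safely confirmed by combining the strand analysis with a direct finite verification in $B_2$ and $B_3$ (using Proposition~\ref{prop:caracterisation_fullycom}, or alternatively the heap classification of Proposition~\ref{heaps-typeB}), after which monotonicity under pattern containment propagates the conclusion to all $n$.
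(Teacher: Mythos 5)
First, note that the paper does not prove this proposition at all: it is quoted verbatim from Stembridge \cite[Theorem 5.1]{ST3}, so there is no internal proof to compare against, and your attempt has to stand on its own. As it stands it is an outline with two genuine gaps, both located exactly where you yourself predicted the difficulty would be. The first is the claim you call the ``non-circular core'': that flattening a signed permutation to a chosen subset of positions ``carries reduced words to reduced words and braid factors to braid factors.'' No such correspondence exists in any direct sense --- the flattened pattern lives in a smaller group $B_k$, its length is unrelated to that of $w$, and there is no natural map from reduced words of $w$ to reduced words of the pattern --- so this monotonicity statement (non-FC patterns force non-FC elements) is not a formal functoriality but is itself the sufficiency half of the theorem; it needs a genuine construction, e.g.\ given an occurrence of $[2,-3,1]$ inside $w$ one must actually exhibit a reduced word of $w$ containing $s_is_{i+1}s_i$ or $s_0s_1s_0s_1$. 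Relatedly, the ``standard fact'' about three pairwise-crossing wires is essentially the type $A$ theorem of \cite{BJS}, and it has no off-the-shelf type $B$ analogue: in the centrally symmetric picture two $B_n$-strands can cross \emph{twice} (once directly and once through the mirror, as happens in $[-1,-2]=s_0s_1s_0s_1$), and it is precisely this interaction with the axis that produces the unusual family $-b>|a|>c$. The case analysis you defer is the theorem.

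The second gap is logical rather than technical: your fallback --- verify the characterization in $B_2$ and $B_3$ and then ``propagate by monotonicity'' --- cannot prove the necessity direction. Monotonicity (granting it) propagates non-full-commutativity upward from patterns, which yields only sufficiency. Necessity requires that \emph{every} non-FC element of \emph{every} rank contains one of the listed patterns of size at most $3$, i.e., that all pattern-minimal non-FC elements have size at most $3$; a finite check in ranks $2$ and $3$ cannot exclude a pattern-minimal non-FC element of rank $5$, say. Bounding the size of minimal forbidden patterns is exactly the hard content of Stembridge's proof, and in your outline it is carried by the undone case analysis of how the strands extracted from a braid factor sit relative to the axis (note also that after a braid $s_is_{i+1}s_i$ the three strands involved may still cross the axis or one another's mirrors, so their final signs and relative order in $w$ are not yet pinned down by the extraction). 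So the proposal is a reasonable plan in the spirit of known wiring-diagram proofs, but its two load-bearing steps are both missing.
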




\medskip

\subsection{Reduced expressions}
~\label{sec:reduced-expressions}
%
Let $w\in W$ be a FC element of type $A_{n-1}$ or $B_n$. By definition $i \in \Des(w)$ if and only if there exists a peak in $\H(w)$ labeled by $s_i$, where by a {\em peak} we mean a vertex having all its neighbors vertices below it. Moreover, $i \in \Des^L(w)$ if and only if there exists a valley in $\H(w)$ labeled by $s_i$, where by a {\em valley} we mean a vertex having all its neighbors above it. Indeed, notice that $w$ is FC if and only if $w^{-1}$ is FC. Moreover, one can see that $\H(w^{-1})$ is the dual heap of $\H(w)$, i.e. the heap of $w$ with the reverse order, that can be obtained from $\H(w)$ by a horizontal reflection. In Figure~\ref{examplesA1} a heap and its dual are depicted.

For our purposes, it will be useful to introduce a new statistic on $\FC(S_n)$.

\begin{definition}\label{def:first-valley}
For any $\pi \in \FC(S_n)$ we define the {\em first valley}, denoted $v(\pi)$, as follows :
\begin{equation}
 v(\pi):=\left\{ 
\begin{array}{ll} 
\mbox{${\rm min} \left\{\Des(\pi^{-1})\setminus \{1\}\right\}$} & \mbox{if $\Des(\pi^{-1})\setminus \{1\} \neq \emptyset $;}\\
n & \mbox{if $\Des(\pi^{-1}) \setminus \{1\} = \emptyset$.}
\end{array} \right.
\end{equation}
\end{definition}

Note that $v(\pi)=n$ if and only if  $\pi=e$ or $\Des(\pi^{-1})=\{1\}.$
Moreover, for $\pi\in \FC(S_n)$,  we have that 
\begin{equation}\label{eq:pi(1)}
1 \in \Des(\pi^{-1})\Leftrightarrow  \pi(1)=2 .
\end{equation}

In Figure~\ref{examplesA}, the descents of the three elements are surrounded by a square, the valleys by a circle, and the first valley by a double circle.

As we mentioned above, if $w$ is FC then the set of linear extensions of $\H(w)$ is in bijection with the set of reduced expressions of $w$. It will be helpful in the sequel to consider a particular reduced expression for each $w \in \FC(S_{n})\cup\FC(B_{n})$.

\begin{definition}\label{def:diagonal}
The {\em diagonal reduced expression} of $w  \in \FC(S_{n}) \cup \FC(B_{n})$ is obtained  by reading the labels of the vertices in the ``diagonals" of $\H(w)$, directed from south east to north west of $\H(w)$, starting from the leftmost diagonal. Each such diagonal contributes a factor of the form $(s_is_{i-1} \cdots s_j)$ with $0\leq j\leq i\leq n-1$.  
It is easy to see that such an expression corresponds to a linear extension of $\H(w)$.
\end{definition}

More precisely, consider first $e \neq w \in \FC(S_{n})$. Then the diagonal reduced expression for $w$ is of the form
\begin{equation}\label{diagonal-expression-A}
\mathbf{w}=(s_{v_0}s_{v_0-1} \cdots s_{j_0})(s_{v_1}s_{v_1-1} \cdots s_{j_1})\cdots (s_{v_k}s_{v_k-1}\cdots s_{j_k}),
\end{equation}
where $1 \leq v_0<v_1<\ldots<v_k\leq n-1$ and $1\leq j_0<j_1<\ldots<j_k\leq n-1$. We have $v_0=v(\pi)$  if $1 \notin \Des(\pi^{-1})$ and $v_0=1$ otherwise.
\smallskip
\begin{example}
The diagonal reduced expression for the element on the left of Figure~\ref{examplesA} is
$$(s_6\cdots s_1)(s_8\cdots s_4)(s_9\cdots s_5)(s_{10}\cdots s_6)(s_{11}\cdots s_8)(s_{12}\cdots  s_9)(s_{13}),$$
while for the element on the right it is $(s_1)(s_2)(s_3)(s_4)(s_6)(s_8s_7)(s_9s_8).$
\end{example}

In $\FC(B_n)$ there are two possibilities: 
\begin{itemize}
    \item {\bf Alternating:} 
If $w_1$ is alternating then its diagonal reduced expression takes the form
\begin{equation}\label{diagonal-expression-B}
\mathbf{w}_1 =(s_{v_0}s_{v_0-1} \cdots s_{j_0})(s_{v_1}s_{v_1-1} \cdots s_{j_1})\cdots (s_{v_k}s_{v_k-1}\cdots s_{j_k}),
\end{equation}
where $0 \leq v_0<v_1<\ldots<v_k\leq n-1$ and $0 \leq j_{0} \leq j_1 \leq \ldots \leq j_k\leq n-1$ with the condition that equality between two $j_i$'s occurs only if both are $0$.  
\item {\bf Left-peak:}
If $w_2$ is a left-peak then the diagonal reduced expression has the form
\begin{equation}\label{diagonal-expression-LP}
\mathbf{w}_2=(s_{v_0} s_{v_0-1}\cdots s_{0})(s_1)\cdots (s_{{j_1}-1})(s_{v_1}s_{v_1-1} \cdots s_{j_1})\cdots (s_{v_k}s_{v_k-1}\cdots s_{j_k}),
\end{equation}
where $0 < v_0<v_1<\ldots<v_k\leq n-1$ and $1<j_1<\ldots<j_k\leq n-1$. 
\end{itemize}
\begin{example}
The diagonal reduced expression for the element on the left of Figure~\ref{examplesB-uncolored} is
$$(s_0) (s_1s_0) (s_3s_2s_1s_0) (s_5\cdots s_0) (s_6\cdots s_1)(s_8 \cdots s_4) (s_9 \cdots s_5) (s_{10} \cdots s_6) (s_{11}\cdots s_8) (s_{12} \cdots s_9) (s_{13}) $$
while the diagonal reduced expression for the element on the right  is
$$(s_3s_2s_1s_0) (s_1) (s_2) (s_5s_4s_3)(s_7\cdots s_4)(s_8\cdots s_5) (s_{10}\cdots s_6) (s_{11}\cdots s_8).$$
\end{example}

\begin{remark}\label{one line notation of left peak}
Note that if $w$ is a left-peak then we must have $w(1)=1$. Moreover, there must be unique $i>1$ and $k>1$ such that $w(i)=-k$.
\end{remark}

\begin{remark}\label{rem:descents}
Observe that in the three above expressions~\eqref{diagonal-expression-A}--\eqref{diagonal-expression-LP}, the left descents of $w$ belong to the set $\{v_0,v_1,\ldots, v_k\}$ of initial indices of the factors exhibited in the diagonal expressions. More precisely, $v_0 \in \Des^L(w)$, and for $i\geq 1$, $v_i \in \Des^L(w)$ if and only if $v_i-v_{i-1} \geq 2$. 
\end{remark}

\section{Decomposition of FC($B_n$) into fibers}~\label{sec:fibers}

In this section we let $W=B_n$ and $J=S \setminus \{s_0\}$. 
The parabolic subgroup $(B_n)_J$ is isomorphic to 
 $S_n$ and the quotient has the form 
\begin{equation}\label{quot}\
(B_n)^J:=\{\mu \in B_n \mid \Des_B(\mu)\subseteq \{0\}\}=\{ \mu \in B_n \mid \mu(1)<\ldots<\mu(n) \}.
\end{equation}
By Proposition~\ref{prop:quotient},
every $w \in B_n$ has a unique decomposition 
\begin{equation} \label{eq:decomposition}
w=\mu \cdot \pi
\end{equation}
where $\mu\in (B_n)^J, \pi \in (B_n)_J$, and 
\begin{equation} \label{sumlength}
\ell_B(w)=\ell_B(\mu)+\ell_B(\pi).
\end{equation}

Notice that $\mu$ can be written 
as the ascending reordering of $w$,
and the counterpart $\pi$ is the permutation in $S_n$ which records the letters $w_1,\ldots, w_n$ in the relative standard order. 
The permutation $\pi$ is called the {\em standardization} of the signed permutation $w$. 
For example,  
$[1,-3,-2,4]=[-3,-2,1,4]\cdot [3,1,2,4]$.

\medskip

We can characterize precisely the reduced expressions of the elements in $(B_n)^J$. By defining 
$$\delta_i:=s_{i-1}\cdots s_2s_1s_0$$ for  integers $i$ such that $1\leq i\leq n$, we have
\begin{equation}\label{characterization}
(B_n)^J=\{\mu \in B_n\mid \mu=\mu_1\cdots \mu_{n}, \ \mu_i \in \{e,\delta_i\}\}.
\end{equation}

Indeed, take an element $\mu\in (B_n)^J$. If Neg($\mu$)$=\emptyset$ then $\mu=e$, otherwise set $\Neg(\mu)=\{i_1,\dots,i_k\}$  
with 
$-i_1<\dots<-i_k<0$. 
We have $\mu(1)=-i_1,\dots,\mu(k)=-i_k$, while $\mu(k+1),\dots,\mu(n)$ have to be positive and in increasing order, therefore $\mu$ has a reduced expression $\boldsymbol{\mu}=\delta_{i_k}\dots\delta_{i_1}$. Conversely, one checks that any element with reduced expression $\delta_{i_k}\cdots\delta_{i_1}$ satisfies the inequalities in~\eqref{quot}. 
\smallskip

This implies the following description that will be used in Section~\ref{sec:equidistribution}.

\begin{observation}\label{obs:mu}
Every $\mu\in (B_n)^J$ is an increasing sequence of $n$ letters
from $[\pm n]$ with distinct absolute values.
Hence
$\mu^{-1}$ is a shuffle of $[-k,-k+1,\dots,-1]$ with $[k+1,k+2,\dots,n]$ for some $0\le k\le n$.
\end{observation}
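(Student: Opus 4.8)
The plan is to read off both assertions directly from the characterization of $(B_n)^J$ in~\eqref{quot} together with the defining relation $w(-i)=-w(i)$ of signed permutations; no machinery beyond this is needed. First I would dispose of the initial claim, which is essentially definitional. By~\eqref{quot} an element $\mu\in(B_n)^J$ satisfies $\mu(1)<\mu(2)<\cdots<\mu(n)$, so the one-line word $[\mu_1,\dots,\mu_n]$ is strictly increasing. Since $\mu$ is a bijection of $[\pm n]$ commuting with negation, the set $\{|\mu_1|,\dots,|\mu_n|\}$ equals $\{1,\dots,n\}$; in particular the $\mu_i$ have pairwise distinct absolute values. This is exactly the first statement.

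For the shuffle description of $\mu^{-1}$ I would first make the block structure of $\mu$ explicit and then invert. Because $[\mu_1,\dots,\mu_n]$ is increasing, all negative entries precede all positive ones, so with $k:=\#\Neg(\mu)$ we have $\mu_1<\cdots<\mu_k<0<\mu_{k+1}<\cdots<\mu_n$, whence also $|\mu_1|>\cdots>|\mu_k|$. (This is precisely the structure derived just above in the course of establishing~\eqref{characterization}.) Now I compute $\mu^{-1}$ value by value using the elementary rules that $\mu_i=j$ forces $\mu^{-1}(j)=i$, while $\mu_i=-j$ forces $\mu^{-1}(j)=-i$, the latter because $\mu(-i)=-\mu_i=j$. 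As $i$ runs over the negative block $\{1,\dots,k\}$ the value $\mu^{-1}(|\mu_i|)=-i$ runs over $\{-1,\dots,-k\}$, and as $i$ runs over the positive block $\{k+1,\dots,n\}$ the value $\mu^{-1}(\mu_i)=i$ runs over $\{k+1,\dots,n\}$. Hence the entries of $\mu^{-1}$ are exactly $-1,\dots,-k,k+1,\dots,n$, each occurring once.

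It then remains to check that these entries appear in the correct relative order when $\mu^{-1}$ is read left to right, that is, by increasing argument $j$. For the negative entries I would use that among the negative values of $\mu$ the position $i$ decreases as $|\mu_i|=j$ increases (since $|\mu_1|>\cdots>|\mu_k|$), so $\mu^{-1}(j)=-i$ sweeps through $-k,-k+1,\dots,-1$ in that order; thus the negative subsequence of $\mu^{-1}$ is $[-k,-k+1,\dots,-1]$. For the positive entries, the position $i$ increases with $\mu_i=j$, so $\mu^{-1}(j)=i$ sweeps through $k+1,k+2,\dots,n$ in order. Therefore $\mu^{-1}$ is an interleaving of the two increasing sequences $[-k,\dots,-1]$ and $[k+1,\dots,n]$, which is exactly the claimed shuffle.

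The statement is elementary enough that there is no genuine obstacle; the only point requiring care is the sign bookkeeping in the inversion rule $\mu_i=-j\Rightarrow\mu^{-1}(j)=-i$, combined with the order-reversal $|\mu_1|>\cdots>|\mu_k|$ inside the negative block, which is what guarantees that the negatives come out in the increasing order $-k,\dots,-1$ rather than the reverse. Once these are tracked the conclusion is immediate, so I expect the verification to be routine.
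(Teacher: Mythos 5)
Your proof is correct and follows essentially the same route as the paper, which treats the observation as an immediate consequence of the characterization of $(B_n)^J$ in~\eqref{quot}: the increasing one-line word with negatives first, followed by the sign-tracking inversion $\mu_i=-j\Rightarrow\mu^{-1}(j)=-i$. The paper leaves these details implicit, and your write-up simply makes the routine bookkeeping explicit.
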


As clearly no {\em long braid type factor}, that is a factor of the form $s_0s_1s_0s_1$, $s_1s_0s_1s_0$ or $s_is_{i\pm 1}s_i$, can occur in all the reduced expressions of the elements in the set~\eqref{characterization}, one deduces from Proposition~\ref{prop:caracterisation_fullycom} that 
each element in $(B_n)^J$ is FC, namely
\begin{equation}\label{eq:FC-equality}
    \FC((B_n)^J):=(B_n)^J \cap \FC(B_n) =(B_n)^J.
\end{equation} 

This also follows from Proposition~\ref{prop:fc1}, since any $\mu \in (B_n)^J$ is an increasing sequence. Moreover, note that the reduced expressions for the elements in $(B_n)^J$ given in~\eqref{characterization} are the diagonal reduced expressions from Definition~\ref{def:diagonal} of the corresponding heaps.
\smallskip

\begin{figure}[!ht]
\includegraphics[scale=0.7]{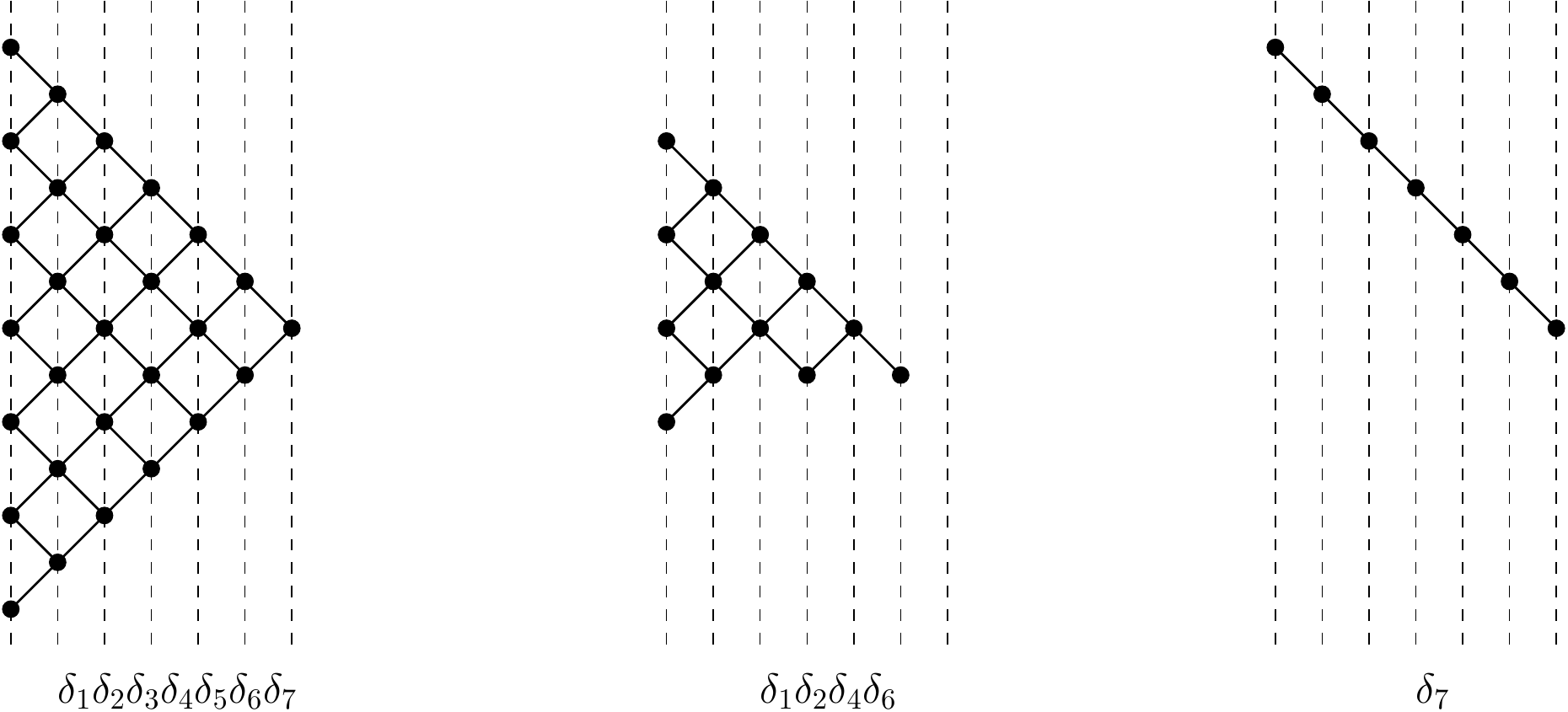}
\caption{Left: the heap of the element with maximal length of $B_7^J$, that is the element with diagonal reduced expression $\delta_1\delta_2\delta_3\delta_4\delta_5\delta_6\delta_7$. Center: the heap of the element in $B_7^J$ with diagonal reduced expression $\delta_1\delta_2\delta_4\delta_6$. Right: the heap of the element with diagonal reduced expression $\delta_7$. \label{quotientB} } 
\end{figure}

Note that the heap of any element in $(B_n)^J$ can be depicted as a sub-poset of the ``triangular'' heap corresponding to the element $\delta_1\cdots \delta_n$ having maximal length, see Figure ~\ref{quotientB}. 
\smallskip
 
Now we consider the restriction of the decomposition~\eqref{eq:decomposition} 
 to FC elements, $w= \mu \cdot \pi$ 
 for $w \in \FC(B_n)$. 
 Then all reduced expressions of $w$ contain no braid relation, and thanks to~\eqref{sumlength} above, it implies that both $\mu$ and $\pi$ are FC. Therefore we obtain the following inclusion
\begin{equation}\label{inclusion}
\FC(B_n) \subset \FC((B_n)^J) \times \FC(S_n)=(B_n)^J \times \FC(S_n).
\end{equation}
It is easy to show that this inclusion is strict (take for instance ${\mu}=s_0s_1s_0$ and ${\pi}=s_1$).

Our next result refines the previous inclusion by exhibiting for any fixed FC permutation in $S_n$ the corresponding subset of $(B_n)^J$. 

\begin{theorem}\label{thm:fibers}
We have the following decomposition
\begin{equation}\label{disjoint_union}
\FC(B_n)=\biguplus_{\pi \in \FC(S_n)}
B_n(\pi)\cdot \pi,
\end{equation}
where  
$$B_n(\pi):=\left\{ 
\begin{array}{ll} 
\{\mu \in B_n\mid \mu=\mu_1\cdots \mu_{v(\pi)},\;\mu_i \in \{e,\delta_i\}\} & \mbox{if $1\notin\Des(\pi^{-1})$};\\
\{\mu \in B_n\mid \mu \in\{e,\delta_1,\ldots,\delta_{v(\pi)}\}\} & \mbox{if $1\in \Des(\pi^{-1})$},
\end{array} \right. $$
\smallskip
\noindent and $v(\pi)$ is the first valley of $\pi$ from Definition~\ref{def:first-valley}. 
\end{theorem}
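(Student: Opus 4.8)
The plan is to combine the unique parabolic factorization with Stembridge's pattern criterion (Proposition~\ref{prop:fc1}). The disjoint union itself is essentially free: by Proposition~\ref{prop:quotient} every $w\in B_n$ factors uniquely and length-additively as $w=\mu\cdot\pi$ with $\mu\in (B_n)^J$ and $\pi\in (B_n)_J\cong S_n$, and by the inclusion~\eqref{inclusion} an FC element $w$ forces both $\mu$ and $\pi$ to be FC, so $\pi\in\FC(S_n)$ and $\mu\in (B_n)^J$. Sorting $\FC(B_n)$ by its $\pi$-component therefore gives $\FC(B_n)=\biguplus_{\pi\in\FC(S_n)}\{\mu\in (B_n)^J:\mu\pi\in\FC(B_n)\}\cdot\pi$, so the whole content of the theorem is the identification of the fiber $\{\mu\in (B_n)^J:\mu\pi\in\FC(B_n)\}$ with $B_n(\pi)$. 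First I would parametrize $\mu$ via~\eqref{characterization}: writing $T\subseteq[n]$ for the set of values negated by $\mu$ (equivalently the set of indices $i$ with $\mu_i=\delta_i$) and $m:=|T|$, the element $\mu$ is the increasing rearrangement of $([n]\setminus T)\cup(-T)$, so that $B_n(\pi)$ asks precisely for $T\subseteq[v(\pi)]$ when $1\notin\Des(\pi^{-1})$, and for $|T|\le 1$ together with $T\subseteq[v(\pi)]$ when $1\in\Des(\pi^{-1})$.

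The key structural observation is that, because $\mu$ is increasing, $w=\mu\pi$ has exactly the same relative order as $\pi$: for all $a,b$ one has $w_a>w_b\iff\pi_a>\pi_b$. Hence $w$ is automatically $321$-avoiding as an unsigned sequence, and every forbidden Stembridge pattern in $w$ must be created by the sign changes coming from $\mu$. Moreover $w_j<0\iff\pi_j\le m$, and the most negative entry of $w$ is $-\max T=\mu(1)$, occurring in position $\pi^{-1}(1)$. I would then feed these facts into Proposition~\ref{prop:fc1}, treating its two pattern families separately.

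For the pattern $[-1,-2]$: an occurrence is a pair of positions $j<j'$ with $w_j,w_{j'}<0$ and $w_j>w_{j'}$, which by order-preservation translates to $\pi_j,\pi_{j'}\le m$ and $\pi_j>\pi_{j'}$. Thus $w$ avoids $[-1,-2]$ iff the values $1,\dots,m$ occur in increasing position in $\pi$, i.e. iff $\{1,\dots,m-1\}\cap\Des(\pi^{-1})=\emptyset$. Using~\eqref{eq:pi(1)} this splits exactly along the two cases of the theorem: when $1\in\Des(\pi^{-1})$ (that is $\pi(1)=2$) the condition forces $m\le 1$, while when $1\notin\Des(\pi^{-1})$ it forces $m\le\min\Des(\pi^{-1})=v(\pi)$. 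This is where the case distinction and the quantity $v(\pi)$ of Definition~\ref{def:first-valley} enter naturally.

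The remaining and harder input is the family $[a,b,c]$; the claim I would prove is that, granted $w$ avoids $[-1,-2]$, it avoids all patterns $|a|>b>c$ and $-b>|a|>c$ iff $\max T\le v(\pi)$. Since $w$ is $321$-avoiding, a forbidden $|a|>b>c$ is only possible with $a<0$, so such a pattern consists of a negative entry $-s$ ($s\in T$) followed by a strictly decreasing pair of entries of smaller value, i.e. an inversion $\pi_j>\pi_k$ of $\pi$ to its right whose larger entry has magnitude below $s$; the pattern $-b>|a|>c$ is handled symmetrically using two negative entries and one entry to their right. The first valley $v(\pi)$ is precisely the threshold at which such a configuration first becomes available as $\max T$ grows: for $\max T\le v(\pi)$ the magnitude bound, together with the fact that the values $1,\dots,v(\pi)$ occur increasingly in $\pi$, rules out every such triple, whereas for $\max T>v(\pi)$ the descent guaranteed by $v(\pi)\in\Des(\pi^{-1})$ produces an explicit forbidden triple built on the most negative entry $-\max T$. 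Combining the two families then yields $T\subseteq[v(\pi)]$ when $1\notin\Des(\pi^{-1})$ (the $[-1,-2]$ bound $m\le v(\pi)$ being subsumed) and $|T|\le 1$ with $T\subseteq[v(\pi)]$ when $1\in\Des(\pi^{-1})$, which are exactly the two descriptions of $B_n(\pi)$. The main obstacle is this last equivalence: tracking the interaction between the magnitudes $\max T$ and the positional data of $\pi$ encoded by its first valley, and verifying that no forbidden triple of either type survives when $\max T\le v(\pi)$. An equivalent, more structural route would replace the pattern bookkeeping by the heap formalism of Section~\ref{sec:heaps}: prepending the $s_0$-diagonals $\{s_{i-1}\cdots s_0:i\in T\}$ of $\mu$ to the diagonal reduced expression~\eqref{diagonal-expression-A} of $\pi$ (whose leading index is $v_0=v(\pi)$ when $1\notin\Des(\pi^{-1})$), one checks that the concatenation is again of the diagonal form~\eqref{diagonal-expression-B} or~\eqref{diagonal-expression-LP} — hence alternating or a left-peak, so FC by Proposition~\ref{heaps-typeB} — exactly under the constraints defining $B_n(\pi)$, the obstruction being a braid created when a diagonal of $\mu$ reaches the height of the first valley of $\pi$.
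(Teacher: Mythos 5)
Your proposal is correct, and it takes a genuinely different route from the paper's proof. The paper argues entirely at the level of reduced words and heaps: writing $w=\mu\cdot\pi$, it shows that if $\boldsymbol{\mu}$ ends in a factor $\delta_i$ with $i>v(\pi)$, or contains two $\delta$-factors when $\pi(1)=2$, then commutation relations produce a forbidden factor $s_vs_{v-1}s_v$ or $s_0s_1s_0s_1$ in some reduced word of $w$; conversely, for $\mu\in B_n(\pi)$ it concatenates the diagonal reduced expressions of $\mu$ and $\pi$ and checks that no nil or long-braid factor can arise. You instead reduce everything to Stembridge's pattern criterion (Proposition~\ref{prop:fc1}), powered by the observation that left multiplication by the increasing element $\mu$ preserves relative order, so $w$ is order-isomorphic to $\pi$ and every obstruction must come from the signs; the $[-1,-2]$ patterns then bound $|T|$ and produce exactly the case split at $\pi(1)=2$, while the length-$3$ patterns pin down $\max T\le v(\pi)$. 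Your key claim (given $[-1,-2]$-avoidance, the length-$3$ patterns are avoided if and only if $\max T\le v(\pi)$) is true, and the construction you indicate for the ``only if'' direction works: when $\max T>v(\pi)$, necessarily $v:=v(\pi)\in\Des(\pi^{-1})$, and the entry $-\max T$ at position $\pi^{-1}(1)$ together with the entries at positions $\pi^{-1}(v+1)<\pi^{-1}(v)$ yields a violation of $|a|>b>c$ or of $-b>|a|>c$ according to where $\pi^{-1}(1)$ falls among them. Your route is more elementary (pure one-line-notation bookkeeping) and is in fact the same technique the paper itself uses later to prove Theorem~\ref{thm:cells}; the paper's word-level proof buys explicit reduced expressions for the elements of each fiber, which are reused to separate alternating from left-peak elements inside fibers (Corollary~\ref{cor:LP_Alt}). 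Your closing ``heap'' alternative is essentially the paper's actual proof.

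Two small repairs to your bookkeeping, neither of which breaks the argument. First, when $1\in\Des(\pi^{-1})$ the values $1,\dots,v(\pi)$ do \emph{not} occur in increasing position in $\pi$ (the value $1$ sits after $2$); what is true, and what your avoidance argument needs there, is that $2,\dots,v(\pi)$ occur increasingly, while the unique negative entry of $w$ (recall $|T|\le 1$ in this case) carries the value $1$. Second, for the family $-b>|a|>c$: under $[-1,-2]$-avoidance the sub-case $a<0$ cannot occur at all, since the negative entries of $w$ must increase from left to right, forcing $|a|>-b$; so the configuration to eliminate has a single negative entry in the middle position and a positive entry before it, not ``two negative entries''. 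Your positional argument closes that case as well.
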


\begin{proof}
First note that the sets on the right-hand side of~\eqref{disjoint_union} are disjoint by uniqueness of the decomposition~\eqref{eq:decomposition}.

Let us now consider an element $w \in \FC(B_n)$ and write it uniquely as $w=\mu \cdot \pi  \in (B_n)^J\times \FC(S_n)$, according to the decomposition~\eqref{eq:decomposition} and the inclusion~\eqref{inclusion}. We need to show that $\mu \in B_n(\pi)$, and to this aim we consider three cases. Along the proof we set $v:=v(\pi)$; note that by definition $v\geq 2$. 

\begin{enumerate}
\item If  $\pi=e$, we get the result by~\eqref{characterization}.

\item  If $\pi\neq e$ and $1\notin\Des(\pi^{-1})$, then there exists a reduced expression $\boldsymbol{\pi}$ of $\pi$ starting with a factor $s_{v}s_{v-1}\cdots s_{j}$, for an integer $j$ satisfying $1\leq j\leq v$. For the sake of a contradiction, assume that the rightmost factor in the reduced expression $\boldsymbol{\mu}=\mu_1\cdots\mu_n $ of $\mu$ is $\delta_i$ with $i> v$. It suffices to assume that $i=v+1$. Then me may write a reduced expression of $w$ as
$${\bf w}={\bf u}  (s_{v}s_{v-1}\cdots s_1s_0) \cdot (s_{v}s_{v-1}\cdots s_{j}){\bf \tilde{u}},$$
where ${\bf u}$ (respectively ${\bf \tilde{u}}$) is a left (respectively right) factor of $\boldsymbol{\mu}$ (respectively $\boldsymbol{\pi}$). Now between the two above occurrences of $s_v$ there is no occurrence of $s_{v+1}$, hence by applying commutation relations to ${\bf w}$ we obtain a reduced expression containing the factor $s_v s_{v-1}s_v$, a contradiction for a FC element in $B_n$ (as $v\geq 2$). An example of this case is depicted in Figure~\ref{examplesB-colored}, left.

\item If $1\in\Des(\pi^{-1})$, then we discuss two cases.  

Assume first that $\Des(\pi^{-1})=\{1\}$, which means by definition that $v=n$. Equivalently, the one-line notation of $\pi$ 
is $[2,\dots,1,\dots]$ where the elements represented by the dots are in increasing order. This means that $\pi$ has a reduced expression of the form $s_{1}s_2\cdots s_j$ for some $j\in\{1,\dots,n-1\}$. For the sake of a contradiction, suppose that no reduced expression $\boldsymbol{\mu}$ of $\mu$ belongs to $\{e,\delta_1,\ldots,\delta_{n}\}$. Then by~\eqref{characterization},  $\boldsymbol{\mu}$ contains at least two factors $\delta_i$. Let us consider the two rightmost  factors in $\boldsymbol{\mu}$, say $\delta_{i_1}$, $\delta_{i_2}$ with $1\leq i_1<i_2\leq n$. Hence 
$${\bf w}={\bf u}(s_{i_1-1}\cdots s_1s_0)(s_{i_2-1}\cdots s_2s_1s_0) \cdot (s_{1})\cdots (s_j).$$
Now the occurrence of $s_0$ in the factor $\delta_{i_1}$ commutes with all the generators in $\delta_{i_2}$ on its right up to $s_2$ included; so we can move it until the occurrence of $s_1$ in $\delta_{i_2}$, which would give a reduced expression of $w$ that contains a factor $s_0s_1s_0s_1$. This is a contradiction since $w$ is fully commutative. 
\medskip

Next assume that $\{1\} \subsetneq \Des(\pi^{-1})$. Since $\pi^{-1}$ is FC, $2 \notin \Des(\pi^{-1})$ so we must have $v\geq3$. (See an example in Figure~\ref{examplesB-colored}, right.) The diagonal reduced expression of Definition~\ref{def:diagonal}  of $\pi$ starts with the factors $(s_1)(s_2)\cdots (s_{j_1})(s_{v}s_{v-1}\cdots s_{j_2})$, where $1\leq j_1<j_2\leq v$. Note that if $j_2=j_1+1$ then $v>j_2$ (see Figure~\ref{examplesA}, center). For the sake of a contradiction, suppose that each reduced expression of $\mu$ satisfies $\boldsymbol{\mu}\not \in \{e,\delta_1,\ldots,\delta_{v}\}$. Then by~\eqref{characterization},  $\boldsymbol{\mu}$ contains either  the product of at least two different factors of the form $\delta_i$, or a single $\delta_i$ with $i>v$.
In the first situation, let us consider the two rightmost such factors in $\boldsymbol{\mu}$, say $\delta_{i_1}$, $\delta_{i_2}$ with $1\leq i_1<i_2\leq n$. Hence
$${\bf w}={\bf u}(s_{i_1 -1}\cdots s_1s_0)(s_{i_2 -1}\cdots s_2s_1s_0) \cdot (s_1)(s_2)\cdots (s_{j_1})(s_{v}s_{v-1}\cdots s_{j_2}){\bf \tilde{u}}.$$
Now by commutation relations we obtain the same contradiction as above.

In the second situation, take the rightmost factor $\delta_i$ in $\boldsymbol{\mu}$ with  $i>v$: without loss of generality, one can take $i=v+1$. We get 
$${\bf w}=(s_{v}s_{v-1}\cdots s_1s_0) \cdot (s_1)(s_2)\cdots (s_{j_1})(s_{v}s_{v-1}\cdots s_{j_2}){\bf \tilde{u}}.$$
If $j_2>j_1+1$, then $v\geq j_2>j_1+1$ (see e.g. Figure~\ref{examplesA}, right), therefore the second occurrence of $s_v$ in this expression commutes with all generators on its left up to $s_{v}s_{v-1}$, thus commutation relations would yield a factor $s_{v}s_{v-1}s_v$, a contradiction. 
If $j_2=j_1+1$, we can conclude in the same way, thanks to the condition $v>j_2>j_1$ in this case.

\end{enumerate}

\begin{figure}[!ht]
\includegraphics[scale=0.7]{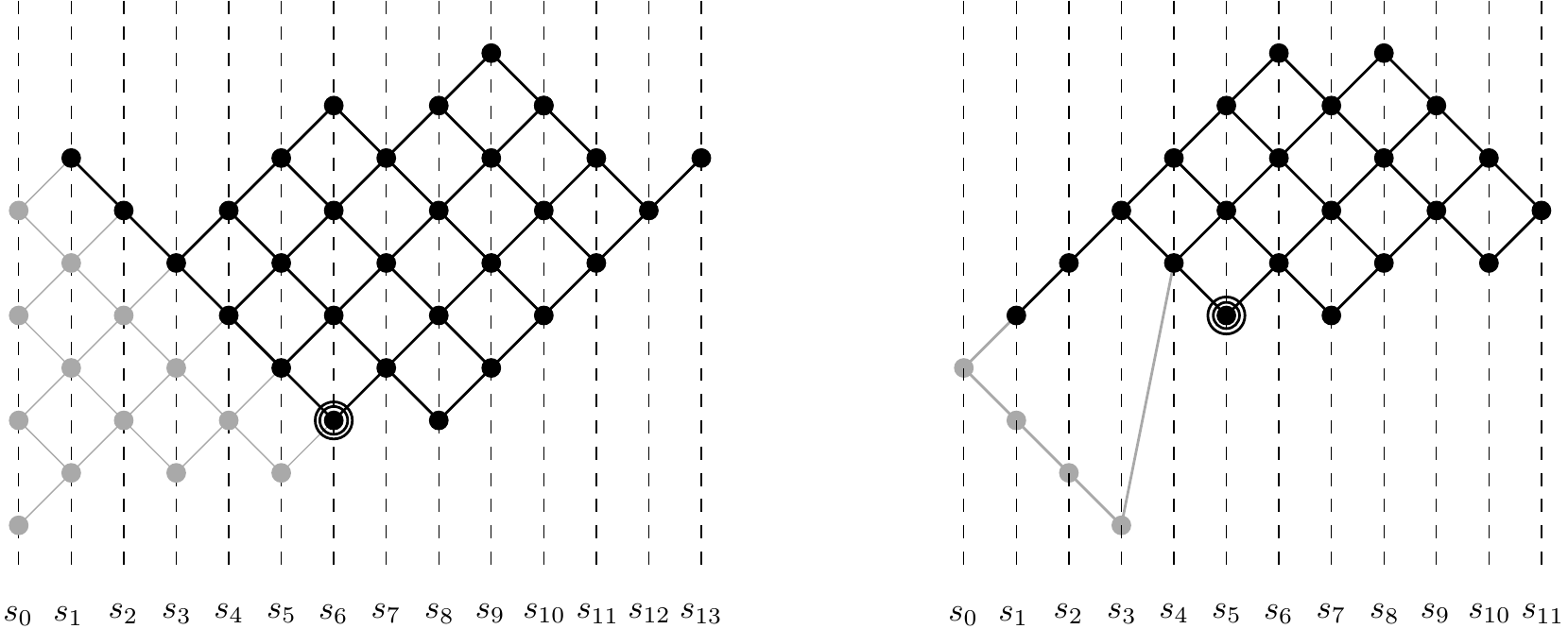}
\caption{Two products $\mu\cdot\pi$, one yielding an alternating element (left), the other giving a left-peak (right). The heap of $\mu$ is in grey. \label{examplesB-colored}}
\end{figure}

\smallskip

Let us show the opposite inclusion, by considering again three cases.
\begin{enumerate}
\item Taking $\pi=e$, by \eqref{eq:FC-equality} we have  $B_n(e)=(B_n)^J\subset \FC(B_n)$.

\item Now, let $\pi \in \FC(S_n)$ such thate $\pi\neq e$ and $1\notin\Des(\pi^{-1})$ (see Figure~\ref{examplesB-colored}, left). The diagonal reduced expression of $\pi$ takes the form  \eqref{diagonal-expression-A}.
Now let $\mu$ be any element in $B_n(\pi)$. First notice that $\mu$ is FC, as $B_n(\pi)\subset (B_n)^J$. Moreover the diagonal reduced expression for $\mu$ is made of some factors chosen from the product $(s_0)(s_1s_0)\cdots (s_{v-1}s_{v-2}\cdots s_2s_1s_0)$. Assume that the rightmost of these factors is $(s_{i}s_{i-1}\cdots s_2s_1s_0)$, with $i\leq v-1$. Then we can concatenate the two expressions to get
$${\bf w}=(s_0)^\pm(s_1s_0)^\pm\cdots (s_{i}s_{i-1}\cdots s_2s_1s_0)\cdot (s_{v}s_{v-1}\cdots s_{j_0})(s_{v_1}s_{v_1-1} \cdots s_{j_1})\cdots (s_{v_k}s_{v_k-1}\cdots s_{j_k}),$$
where $(\cdot)^\pm$ means that the expression between the parentheses might appear or not appear. Recall that, as both $\mu$ and $\pi$ are separately FC, the two reduced expressions above for $\mu$ and $\pi$ do not contain any long braid type factor. We will show that their product is also a reduced expression for $w$ with no braid type factor. 

If a {\em nil factor} $s_i s_i$, or a long braid type factor appears in the product, it has to involve generators in the last factor of $\boldsymbol{\mu}$: $(s_{i}s_{i-1}\cdots s_1s_0)$ and in the first of $\boldsymbol{\pi}$: $(s_vs_{v-1}\cdots s_{j_0})$. Let us consider two consecutive occurrences of a generator $s_q$ in ${\bf w}$, one in $\boldsymbol{\mu}$ and the other in $\boldsymbol{\pi}$, therefore with $j_0\leq q\leq i$. By definition, the generator $s_{q-1}$ follows $s_q$ in the rightmost factor of $\boldsymbol{\mu}$. Moreover, since $q\leq i\leq v-1$ we have that $s_{q+1}$ appears before $s_q$ in $\boldsymbol{\pi}$. This implies that between the two occurrences of $s_q$ in $w$ there are both occurrences of $s_{q+1}$ and $s_{q-1}$. Hence neither nil nor braid type factor may appear in any expression of $w$, and $w\in \FC(B_n)$. 

\item Finally consider a FC element $\pi$ such that  $1\in\Des(\pi^{-1})$ (see Figure~\ref{examplesB-colored}, right). 
As observed before, either $\boldsymbol{\pi}=(s_1)(s_2)\cdots (s_j)$ or $\boldsymbol{\pi}=(s_1)(s_2)\cdots (s_{j_1})(s_{v}s_{v-1}\cdots s_{j_2})\mathbf{\tilde{u}}$. If $\mu=e$, then $\mu\cdot\pi=\pi$ is FC. If $\boldsymbol{\mu}=\delta_i$ for $1\leq i\leq v$, then the corresponding reduced expression for the product $\mu\cdot \pi$ is either equal to 
$$(s_{i-1}\cdots s_1s_0)\cdot (s_1)(s_2)\cdots (s_j)$$
or 
$$(s_{i-1}s_{i-2}\cdots s_1s_0)\cdot(s_1)(s_2)\cdots (s_{j_1})(s_{v}s_{v-1}\cdots s_{j_2})\mathbf{\tilde{u}}.$$
In both cases, this product is FC, since it contains neither nil nor braid factor. 
\end{enumerate}
\end{proof}

We call the set $B_n(\pi)\cdot \pi:=\{ \mu \cdot \pi \mid \mu \in B_n(\pi)\}$ the {\em fiber} associated to $\pi \in\FC(S_n)$.
We can characterize alternating and left-peak elements by using fibers.

\begin{corollary}\label{cor:LP_Alt} 
Let $w \in \FC(B_n)$ be written in the form $w=\mu \cdot \pi  \in (B_n)^J\times \FC(S_n)$ according to~\eqref{disjoint_union}. Then we have the following characterizations:
\begin{itemize}
    \item $w$ is a left-peak if and only if $\pi(1)=2$ and ${\mu}=\delta_i$ for some $i\in\{2,\dots,v(\pi)\}$;
    \item $w$ is alternating if and only if either $\pi(1)\neq2$ or ($\pi(1)=2$ and ${\mu}=e$ or ${\mu}=\delta_1$).
\end{itemize}
\end{corollary}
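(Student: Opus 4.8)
The plan is to characterize left-peak and alternating elements using the diagonal reduced expressions from Definition~\ref{def:diagonal} together with the fiber decomposition of Theorem~\ref{thm:fibers}, leaning on the crucial arithmetic fact that by~\eqref{eq:pi(1)} the condition $\pi(1)=2$ is equivalent to $1\in\Des(\pi^{-1})$. First I would recall that, by the classification in Proposition~\ref{heaps-typeB}, every $w\in\FC(B_n)$ is either alternating or a left-peak, so the two characterizations are complementary; it therefore suffices to prove the left-peak equivalence, and the alternating statement follows by taking the complement inside each fiber $B_n(\pi)\cdot\pi$.

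For the forward direction, suppose $w=\mu\cdot\pi$ is a left-peak. By Remark~\ref{one line notation of left peak} we have $w(1)=1$, and there is a unique pair $i>1$, $k>1$ with $w(i)=-k$; I would unpack what this forces on the factorization. The presence of exactly one negative entry (necessarily $-k$ with $k>1$) means $\Neg(\mu)$ is a singleton, so by~\eqref{characterization} we have $\mu=\delta_k=\delta_i$ for a single index, ruling out $\mu=e$ (which would give $w\in\FC(S_n)$, not a left-peak) and any product of two or more $\delta$'s. The condition $w(1)=1$ combined with the left-peak heap structure of Definition~\ref{defi:LP}(a), which demands the factor $s_j\cdots s_1 s_0 s_1\cdots s_j$, forces $\pi(1)=2$, equivalently $1\in\Des(\pi^{-1})$ by~\eqref{eq:pi(1)}. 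In this $1\in\Des(\pi^{-1})$ branch, Theorem~\ref{thm:fibers} tells us $\mu\in\{e,\delta_1,\ldots,\delta_{v(\pi)}\}$, and since $\mu\ne e$ and $\mu=\delta_1$ would produce an alternating heap (the $s_0$ does not combine with any $s_1,\ldots,s_j$ on the $\pi$ side into the required peak shape), the index must satisfy $i\in\{2,\ldots,v(\pi)\}$.

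For the converse, assume $\pi(1)=2$ and $\mu=\delta_i$ with $2\le i\le v(\pi)$. I would write the product $\mu\cdot\pi$ in its diagonal reduced form: $\mu$ contributes the single factor $(s_{i-1}\cdots s_1 s_0)$, while $\pi$, having $1\in\Des(\pi^{-1})$, starts with $(s_1)(s_2)\cdots$ as in the analysis following~\eqref{diagonal-expression-LP}. Concatenating produces an expression whose $\{s_0,\ldots,s_j\}$-restriction is exactly $s_j\cdots s_1 s_0 s_1\cdots s_j$ for the appropriate $j$, matching Definition~\ref{defi:LP}(a); the constraint $i\le v(\pi)$ guarantees the remaining factors of $\pi$ do not interfere, so conditions (b) and (c) of Definition~\ref{defi:LP} hold and $\H(w)$ is genuinely a left-peak rather than alternating. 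The alternating characterization is then immediate: within each fiber, the left-peaks are precisely the $\mu=\delta_i$ with $i\ge 2$ in the $\pi(1)=2$ case, so the complement consists of all $w$ with $\pi(1)\ne 2$ together with the two remaining choices $\mu=e$ or $\mu=\delta_1$ when $\pi(1)=2$.

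The main obstacle I anticipate is making precise the claim that $\mu=\delta_1$ (when $\pi(1)=2$) yields an alternating heap rather than a left-peak. Both $\delta_1=s_0$ and the leading $s_1$ from $\pi$ involve the generators $s_0,s_1$, and one must verify carefully that they assemble into the alternating pattern of Definition~\ref{defi:alternating} on $\{s_0,s_1\}$ rather than the palindromic peak $s_1 s_0 s_1$ of a left-peak; the boundary index $i=2$ versus $i=1$ is exactly where the heap transitions between the two families, so this case analysis must be done by directly inspecting the relevant sub-heap $\H_{\{s_0,s_1\}}$, which is where I would spend the most care.
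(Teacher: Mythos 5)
Your overall skeleton (the alternating/left-peak dichotomy of Proposition~\ref{heaps-typeB}, the fiber theorem, and~\eqref{eq:pi(1)}) matches the paper's, and your forward direction is sound, though routed differently: the paper reads $\boldsymbol{\mu}=\delta_{i+1}$ and $\boldsymbol{\pi}=s_1\boldsymbol{\pi}_0$ directly off the diagonal reduced expression~\eqref{diagonal-expression-LP} and invokes uniqueness of the decomposition~\eqref{eq:decomposition}, whereas you use the one-line description of Remark~\ref{one line notation of left peak}. Both routes give $\pi(1)=2$ and $\mu$ equal to a single $\delta_k$ with $k\ge 2$ (note it is $\delta_k$ with $k$ the absolute value of the negative entry, not the position $i$ from the remark, so your ``$\mu=\delta_k=\delta_i$'' conflates two different indices), with the upper bound $k\le v(\pi)$ coming from Theorem~\ref{thm:fibers}. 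Incidentally, since Remark~\ref{one line notation of left peak} already gives $k>1$, the separate argument you flag as the ``main obstacle'' --- that $\mu=\delta_1$ would yield an alternating heap --- is redundant in your own structure, and for the second bullet it is subsumed by complementarity once the first bullet is fully proved.

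The genuine gap is in the converse. You propose to verify conditions (a), (b), (c) of Definition~\ref{defi:LP} directly for $w=\delta_i\cdot\pi$, but you only sketch (a) (the V-shaped restriction) and then assert that ``the constraint $i\le v(\pi)$ guarantees the remaining factors of $\pi$ do not interfere, so conditions (b) and (c) hold.'' That assertion is precisely the nontrivial content, and a literal verification is delicate: for instance, for $w=s_1s_0\cdot s_1s_3s_2s_4s_3=[1,4,5,\bar{2},3]$, which the paper lists as a left-peak, the chain $\H_{\{s_1,s_2\}}$ reads $s_1s_1s_2$, so matching the restrictions in Definition~\ref{defi:LP}(b) exactly as stated requires care. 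The paper sidesteps all of this with the dichotomy you already invoked for complementarity: writing $\boldsymbol{w}=s_{i-1}\cdots s_1s_0\cdot s_1\boldsymbol{\pi}_0$, the two displayed occurrences of $s_1$ have no $s_2$ between them, so $\H(w)$ is not alternating, and Proposition~\ref{heaps-typeB} then forces $\H(w)$ to be a left-peak. Replacing your direct verification of (b) and (c) by this two-line ``not alternating'' argument closes the gap; as written, the converse half of your first bullet is unproved.
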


\begin{proof}
We start with the first assertion and assume that $\H(w)$ is a left-peak. Then the diagonal reduced expression (see Definition~\ref{def:diagonal} and~\eqref{diagonal-expression-LP}) for $w$ is of the form 
$$
\boldsymbol{w}=s_i\cdots s_1s_0s_1\boldsymbol{\pi}_0,
$$
where $i\geq1$ and $s_1\boldsymbol{\pi}_0$ is a reduced expression of an element in $\FC(A_{n-1})$. By uniqueness of the decomposition~\eqref{eq:decomposition}, we derive that reduced expressions for $\mu$ and $\pi$ are given by $\boldsymbol{\mu}=\delta_{i+1}$ and $\boldsymbol{\pi}=s_1\boldsymbol{\pi}_0$, respectively. Therefore $1\in\Des^L(\pi)=\Des(\pi^{-1})$, which by~\eqref{eq:pi(1)} is equivalent to $\pi(1)=2$. Setting $j=i+1\geq2$, we get the desired reduced expression for $\boldsymbol{\mu}$ by Theorem~\ref{thm:fibers}.

Conversely, again~\eqref{eq:pi(1)} implies that  $\boldsymbol{\pi}=s_1\boldsymbol{\pi}_0$. Therefore 
$$
\boldsymbol{w}=s_{i}\cdots s_1s_0s_1\boldsymbol{\pi}_0,
$$
where $1\leq i\leq v(\pi)-1$. As there is no $s_2$ between the above two occurrences of $s_1$, we deduce that $\H(w)$ is not alternating, so it is a left-peak.
\medskip

The second assertion is a consequence of the first one and Theorem~\ref{thm:fibers}, together with~\eqref{eq:pi(1)}.
\end{proof}


To illustrate Theorem \ref{thm:fibers}, we end this section by giving two examples.

\begin{example}
Let $\pi=[1,5,2,3,4]=s_4s_3s_2$, therefore $\pi(1)\neq 2$ so $1 \not\in \Des(\pi^{-1})$. 
In the following table, for each element $\mu \cdot \pi$, both the one line notation and the diagonal reduced expression $\boldsymbol{\mu}$ are shown. All elements in the fiber are alternating. 
\begin{center}
\begin{tabular}{|c||r|r}
  \hline
    $B_5(\pi)\cdot \pi$  & $\boldsymbol{\mu}\cdot \boldsymbol{\pi}$\\ 
  \hline\hline
  
    $[1,5,2,3,4]$ & $(s_4s_3s_2)$  & \\
  \hline  
  $[\bar{1},5,2,3,4]$ & $s_0\cdot (s_4s_3s_2)$  &  \\
 \hline
  $[\bar{2},5,1,3,4]$ & $s_1s_0\cdot (s_4s_3s_2)$  \\
 \hline
$[\bar{3},5,1,2,4]$ & $s_2s_1s_0\cdot (s_4s_3s_2)$  &   \\
 \hline
  $[\bar{4},5,1,2,3]$ & $s_3s_2s_1s_0\cdot (s_4s_3s_2)$  &  \\
 \hline
  $[\bar{2},5,\bar{1},3,4]$ & $s_0s_1s_0\cdot (s_4s_3s_2)$  & \\
 \hline
  $[\bar{3},5,\bar{1},2,4]$ & $s_0s_2s_1s_0\cdot (s_4s_3s_2)$  & \\
 \hline
  $[\bar{4},5,\bar{1},2,3]$ & $s_0s_3s_2s_1s_0\cdot (s_4s_3s_2)$  &  \\
 \hline
  $[\bar{3},5,\bar{2},1,4]$ & $s_1s_0s_2s_1s_0\cdot (s_4s_3s_2)$  &  \\
 \hline
  $[\bar{4},5,\bar{2},1,3]$ & $s_1s_0s_3s_2s_1s_0\cdot (s_4s_3s_2)$  &\\
 \hline
  $[\bar{4},5,\bar{3},1,2]$ & $s_2s_1s_0s_3s_2s_1s_0\cdot (s_4s_3s_2)$  &    \\
 \hline
  $[\bar{3},5,\bar{2},\bar{1},2]$ & $s_0s_1s_0s_2s_1s_0\cdot (s_4s_3s_2)$  &   \\
 \hline
  $[\bar{4},5,\bar{2},\bar{1},3]$ & $s_0s_1s_0s_3s_2s_1s_0\cdot (s_4s_3s_2)$  &  \\
 \hline
  $[\bar{4},5,\bar{3},\bar{1},2]$ & $s_2s_1s_0s_3s_2s_1s_0\cdot (s_4s_3s_2)$  &    \\
 \hline
  $[\bar{4},5,\bar{3},1,2]$ & $s_2s_1s_0s_3s_2s_1s_0\cdot (s_4s_3s_2)$  &   \\
 \hline
  $[\bar{4},5,\bar{3},\bar{1},\bar{2}]$ & $s_2s_1s_0s_3s_2s_1s_0\cdot (s_4s_3s_2)$  &    \\
 \hline \hline

\end{tabular}
\end{center}
\medskip

Let $\pi=[2,4,5,1,3]=s_1s_3s_2s_4s_3$, therefore $\pi(1)= 2$ so $1 \in \Des(\pi^{-1})$. 
In this case the fiber is made of two alternating elements (the first two) and two left-peaks.

\begin{center}
\begin{tabular}{|r||r|r}
  \hline
    $B_5(\pi)\cdot \pi$  & $\boldsymbol{\mu}\cdot \boldsymbol{\pi}$ & \\
  \hline
  
    $[2,4,5,1,3]$ & $(s_1s_3s_2s_4s_3)$ &   \\
  \hline  
  $[2,4,5,\bar{1},3]$ & $s_0\cdot (s_1s_3s_2s_4s_3)$  &    \\
 \hline
  $[1,4,5,\bar{2},3]$ & $s_1s_0 \cdot (s_1s_3s_2s_4s_3)$  &  \\
 \hline

  $[1,4,5,\bar{3},2]$ & $s_2s_1s_0 \cdot (s_1s_3s_2s_4s_3)$   & \\
 
  \hline \hline

\end{tabular}
\end{center}

\end{example}

\medskip

\section{Cellular structure}\label{sec:Cellular}

Recall the classical RSK bijection
from permutations in $S_n$ to pairs of standard Young tableaux of the same shape, see e.g.~\cite[\S 7.11]{EC2}.
This algorithm was extended to signed permutations in several ways, see, e.g.,~\cite{SW, Garfinkle}. 
Here we describe Barbash-Vogan's extension of the RSK algorithm which associates with each signed permutation $w \in B_n$ a pair of domino tableaux of the same shape
~\cite{BV, Garfinkle}. We follow the exposition of \cite{Taskin}. 
We start with the following definition.
\begin{definition}
Let $w=[w_1,\dots,w_n]\in B_n$. The {\em palindromic representation} or the {\it $0$-core representation} of $w$ is $w^0: =[-w_n,\dots,-w_1,w_1,\dots,w_n] \in S_{[\pm n]}$.
\end{definition}
The first step of the algorithm applies the usual RSK algorithm on $w^0$ with respect to the natural order $-n<\cdots<-1<1<\cdots< n$
to get a pair of standard tableaux $P_0(w)$ and $Q_0(w)$. 
In the second step we apply jeu de taquin slides to 
vacate each negative number $-i$ (starting from $-n$) 
in each of the Young tableaux $P_0(w)$ and $Q_0(w)$ until $-i$ becomes adjacent to $i$, in which case it loses its sign and becomes $i$. 
The resulting domino tableaux will be respectively denoted $\sP(w)$ and $\sQ(w)$. 

Here is an example which illustrates this algorithm.

\begin{example}
Let $w=[-3,1,2]$. Then $w^0=[-2,-1,3,-3,1,2]\in S_6$. By applying the RSK algorithm we get 

\[w^0\longmapsto (P_0(w), Q_0(w))=\left( \ \begin{ytableau}
 {-3} & {-1} & {1} & {2}\\
 {-2} & {3} \\
\end{ytableau} \quad
, \
\begin{ytableau}
 {-3} & {-2} & {-1} & {3}\\
 {1} & {2} \\
\end{ytableau} \ \right)
.\]
\smallskip

Now, the following process sticks the negative numbers to their positive counterparts by using jeu de taquin slides, starting by vacating $-3$, in both tableaux.

\[\begin{ytableau}
 {-3} & {-1} & {1} & {2}\\
 {-2} & {3} \\
\end{ytableau} \rightarrow
\begin{ytableau}
 {-2} & {-1} & {1} & {2}\\
 {3} & {3} \\
\end{ytableau} \rightarrow 
\begin{ytableau}
 {-1} & {-2} & {1} & {2}\\
 {3} & {3} \\  
\end{ytableau}\rightarrow 
\begin{ytableau}
 {-1} & {1} & {2} & {2}\\
 {3} & {3} \\
\end{ytableau}\rightarrow 
\begin{ytableau}
 {1} & {1} & {2} & {2}\\
 {3} & {3} \\
 \end{ytableau}=\sP(w)\]

and $$\begin{ytableau}
 {-3} & {-2} & {-1} & {3}\\
 {1} & {2} \\
\end{ytableau}\rightarrow
\begin{ytableau}
 {-2} & {-3} & {-1} & {3}\\
 {1} & {2} \\
\end{ytableau}\rightarrow
\begin{ytableau}
 {-2} & {-1} & {3} & {3}\\
 {1} & {2} \\
\end{ytableau}\rightarrow
\begin{ytableau}
 {1} & {-2} & {3} & {3}\\
 {1} & {2} \\
\end{ytableau}\rightarrow
\begin{ytableau}
 {1} & {2} & {3} & {3}\\
 {1} & {2} \\
\end{ytableau}=\sQ(w).$$
\end{example}

\medskip

\begin{proposition}\cite[Propositions 2.7 and 2.9]{Taskin}\label{prop: Taskin_Des}
The above extension of the RSK algorithm is a bijection between $B_n$ and pairs of domino tableaux such that for each $w \in B_n$, which 
satisfies  
\begin{enumerate}
    \item $\sP(w^{-1})=\sQ(w)$ and $\sQ(w^{-1})=\sP(w)$,
    \item  $\Des_B(w)=\Des_B(\sQ(w))$ and $\Des_B(w^{-1})=\Des_B(\sP(w))$. 
\end{enumerate}
\end{proposition}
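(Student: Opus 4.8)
The plan is to reduce every assertion to the ordinary RSK correspondence applied to the palindromic representation $w^0$, and then to control the two auxiliary steps of the algorithm---the relabeling of insertion times by alphabet elements and the jeu de taquin pairing of $-i$ with $i$---through this reduction. The first ingredient I would record is the \emph{descent dictionary} between $w$ and $w^0$. Setting $w_0:=0$ and using $(w^0)_k=-w_{n+1-k}$ for $1\le k\le n$ and $(w^0)_k=w_{k-n}$ for $n<k\le 2n$, a direct comparison shows that for $0\le i\le n-1$ one has $i\in\Des_B(w)$ if and only if $n+i\in\Des(w^0)$; the boundary position $k=n$ compares $-w_1$ with $w_1$ and so detects the sign of $w_1$, that is, whether $0\in\Des_B(w)$. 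The palindromic symmetry of $w^0$ makes its descents in positions $1,\dots,n-1$ mirror those in positions $n+1,\dots,2n-1$, so no information is lost in passing to the second half.

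For the symmetry statement (1), I would first verify the elementary identity $(w^{-1})^0=(w^0)^{-1}$ directly from the definition of the palindromic representation (under the order-isomorphism $[\pm n]\cong\{1,\dots,2n\}$). Since the first step of the algorithm is ordinary RSK, the classical symmetry $P(\sigma^{-1})=Q(\sigma)$ and $Q(\sigma^{-1})=P(\sigma)$ (see \cite[Ch.~7]{EC2}) gives $P_0(w^{-1})=Q_0(w)$ and $Q_0(w^{-1})=P_0(w)$. The second step---vacating the negative entries by jeu de taquin until each $-i$ adjoins $i$---is a deterministic function of a single tableau, hence commutes with the exchange of $P_0$ and $Q_0$; applying it to both sides yields $\sP(w^{-1})=\sQ(w)$ and $\sQ(w^{-1})=\sP(w)$. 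This already reduces the two descent identities in (2) to one: once $\Des_B(w)=\Des_B(\sQ(w))$ is known for every $w$, applying it to $w^{-1}$ and invoking (1) gives $\Des_B(w^{-1})=\Des_B(\sQ(w^{-1}))=\Des_B(\sP(w))$.

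The heart of the argument is therefore the single identity $\Des_B(w)=\Des_B(\sQ(w))$. The classical descent property of RSK states that $k\in\Des(w^0)$ if and only if, in $Q_0(w)$, the $(k{+}1)$-st smallest alphabet element occupies a strictly lower row than the $k$-th smallest. Combined with the descent dictionary, this reformulates $i\in\Des_B(w)$ for $1\le i\le n-1$ as: the positive label $i+1$ lies in a strictly lower row than the positive label $i$ in the straight-shape tableau $Q_0(w)$; and it reformulates $0\in\Des_B(w)$ as: the label $1$ lies below the label $-1$. It then remains to show that the vacating procedure carries this row-order data for consecutive positive labels to the domino descent data of $\sQ(w)$: the northeast cell of the domino labeled $i+1$ ends in a lower row than that of the domino labeled $i$ exactly when $i+1$ was below $i$ in $Q_0(w)$, and the domino labeled $1$ becomes vertical (i.e.\ $\sQ(w)_{2,1}=1$) exactly when $1$ was below $-1$. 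This invariance of the relative row order of consecutive northeast cells under the sequence of slides is the main obstacle; I expect to establish it by induction on the slides, using that a single jeu de taquin slide is a dual equivalence and so preserves the relevant descent structure, together with the compatibility of the Barbash--Vogan pairing with the palindromic shape that guarantees each $-i$ genuinely becomes adjacent to $i$ (cf.\ \cite{BV, Garfinkle, Taskin}).

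Finally, bijectivity follows from the same decomposition: $w\mapsto w^0$ is an injection onto the palindromic permutations of $[\pm n]$, ordinary RSK is a bijection, and the vacating step is invertible on the resulting class of tableaux precisely because the palindromic symmetry forces the pair $\{-i,i\}$ to be realizable as a domino at every stage, so that reversing the slides recovers $(P_0,Q_0)$ and hence $w$. Here too the delicate point is the well-definedness and reversibility of the domino pairing, which rests on the same structural properties of the palindromic shape used for the descent invariance, so I would prove both in tandem.
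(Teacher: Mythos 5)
The paper offers no proof of this proposition: it is quoted verbatim from Ta\c{s}k{\i}n \cite[Propositions 2.7 and 2.9]{Taskin} (building on Garfinkle and Barbash--Vogan), so there is no internal argument to compare yours against; the question is whether your sketch stands on its own. Its reductions are correct and cleanly done: the descent dictionary $i\in\Des_B(w)\Leftrightarrow n+i\in\Des(w^0)$ (including the boundary case detecting the sign of $w_1$), the identity $(w^{-1})^0=(w^0)^{-1}$, the deduction of item (1) from classical RSK symmetry together with the fact that the vacating step acts on each tableau separately, and the observation that (1) reduces item (2) to the single identity $\Des_B(w)=\Des_B(\sQ(w))$.

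The genuine gap is that everything you have left unproven \emph{is} the cited result. What remains after your reductions is precisely: (a) Garfinkle's vacating procedure converts ``the entry $i+1$ lies strictly below the entry $i$ in $Q_0(w)$'' into ``the northeast cell of the domino $i+1$ lies strictly below the northeast cell of the domino $i$ in $\sQ(w)$,'' and ``$1$ lies below $-1$'' into ``the domino of $1$ is vertical''; and (b) the vacating step is well defined and invertible onto all same-shape pairs of standard domino tableaux. Your proposed tool --- ``a single jeu de taquin slide is a dual equivalence, so it preserves the relevant descent structure'' --- does not apply as stated. The classical facts (jdt preserves descent sets; Haiman's dual equivalence theory) concern standard tableaux with distinct entries under ordinary inward or outward slides; here the intermediate objects are hybrid tableaux in which negative entries coexist with already-formed dominoes, entries change sign mid-procedure, the slides are vacating moves launched from an interior cell rather than slides into a corner of a skew shape, and the statistic being tracked changes meaning from relative row order of single cells to relative row order of \emph{northeast cells of dominoes}. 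Making an induction on the slides work requires formulating a descent-type invariant on these hybrid objects and verifying it through a real case analysis of how the moving hole interacts with the cells of $i$, $i+1$ and with previously formed dominoes --- this case analysis is the substance of Ta\c{s}k{\i}n's proof, and the same applies to the reversibility needed for bijectivity. As written, your argument reduces the proposition to two claims that are essentially restatements of it, so it is a correct framing of the problem rather than a proof.
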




\begin{definition}
The {\it two-sided 
 combinatorial cell} of shape $\lambda\vdash 2n$
is the class
\[
{\mathcal C}_\lambda:=\{w \in B_n \mid \shape(\sP(w))=\lambda\}.
\]

\end{definition}

For an intensive discussion of these cells and their relations to the combinatorial description of the Kazhdan–Lusztig cells for type $B_n$ with unequal parameters, see~\cite{Lam, Taskin, Bonnafe}. 

\smallskip



Recall that for $J\subseteq \{0,1,\ldots,n\}$ we denote ${\bf x}^J:=\prod_{i\in J}x_i$ and
${\bf y}^J:=\prod_{i\in J}y_i$.



\begin{theorem}\label{thm:T}
For every partition $\lambda\in P^0(n)$ 
\begin{eqnarray}\label{eq:phi_cell-bitableaux}
\sum\limits_{w\in {\mathcal C_\lambda}} {\bf x}^{\Des_B(w)}{\bf y}^{\Des_B(w^{-1})} &=
\sum\limits_{(\sP,\sQ)\in \DSYT(\lambda)\times \DSYT(\lambda)}{\bf x}^{\Des_B(\sQ)}
{\bf y}^{\Des_B(\sP)}.
\end{eqnarray}
\end{theorem}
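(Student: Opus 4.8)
The plan is to deduce the identity directly from the Barbash--Vogan correspondence and the descent-preservation properties recorded in Proposition~\ref{prop: Taskin_Des}, so that \eqref{eq:phi_cell-bitableaux} reduces to a change of summation variable. The key point is that the whole computation happens term by term: each $w$ contributes a single monomial on the left, and its image under Barbash--Vogan contributes the matching monomial on the right.

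First I would observe that, by construction, the Barbash--Vogan map $w\mapsto(\sP(w),\sQ(w))$ produces a pair of domino tableaux \emph{of the same shape}. Hence $\shape(\sP(w))=\shape(\sQ(w))$ for every $w\in B_n$, and an element $w$ lies in $\mathcal{C}_\lambda$, i.e.\ satisfies $\shape(\sP(w))=\lambda$, precisely when both $\sP(w)$ and $\sQ(w)$ have shape $\lambda$. Since Proposition~\ref{prop: Taskin_Des} asserts that the full map is a bijection between $B_n$ and pairs of domino tableaux of equal shape, its restriction
\[
\mathcal{C}_\lambda \longrightarrow \DSYT(\lambda)\times\DSYT(\lambda),\qquad w\longmapsto(\sP(w),\sQ(w)),
\]
is a bijection for every $\lambda\in P^0(n)$, with the cell mapping \emph{onto} the full product (not merely a proper subset).

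Next I would rewrite the weight attached to each $w\in\mathcal{C}_\lambda$ in terms of its image pair. By part (2) of Proposition~\ref{prop: Taskin_Des} we have $\Des_B(w)=\Des_B(\sQ(w))$ and $\Des_B(w^{-1})=\Des_B(\sP(w))$, so that
\[
{\bf x}^{\Des_B(w)}\,{\bf y}^{\Des_B(w^{-1})}
={\bf x}^{\Des_B(\sQ(w))}\,{\bf y}^{\Des_B(\sP(w))}.
\]
Summing this identity over all $w\in\mathcal{C}_\lambda$ and transporting the sum through the bijection of the previous paragraph reindexes it as a sum over all pairs $(\sP,\sQ)\in\DSYT(\lambda)\times\DSYT(\lambda)$, yielding exactly the right-hand side of \eqref{eq:phi_cell-bitableaux}.

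Since each step is either a cited structural property of the Barbash--Vogan correspondence or a routine reindexing, there is no genuine obstacle. The only point that deserves explicit care is the first one: one must check that the shape condition defining $\mathcal{C}_\lambda$ is symmetric in $\sP(w)$ and $\sQ(w)$, so that the cell surjects onto the whole product $\DSYT(\lambda)\times\DSYT(\lambda)$. This symmetry is precisely the equal-shape feature of the correspondence (and is also consistent with part (1) of Proposition~\ref{prop: Taskin_Des}, under which inversion swaps the roles of $\sP$ and $\sQ$).
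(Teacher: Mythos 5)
Your proposal is correct and is essentially the paper's own argument: the paper proves Theorem~\ref{thm:T} by simply citing Proposition~\ref{prop: Taskin_Des}, and your write-up spells out exactly that reasoning (the Barbash--Vogan bijection restricts to a bijection $\mathcal{C}_\lambda \to \DSYT(\lambda)\times\DSYT(\lambda)$, and part (2) of the proposition transports the monomials term by term). No differences in approach worth noting.
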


\begin{proof}
It follows from Proposition~\ref{prop: Taskin_Des}.
\end{proof}

\medskip

Green and  Losonczy proved that the  set $\FC(B_n)$ is a disjoint union of two-sided Kazhdan-Lusztig cells~\cite[Thm. 3.1.1]{GL}. 
We need a 
combinatorial analogue 
of this theorem. 

\medskip


\begin{definition}\label{def:admissible}
The two following kinds of domino shapes will be called {\em admissible domino shapes}:
\begin{itemize}
\item $\lambda=(2n-k,k)$ for $0 \leq k \leq n$, 
\item $\lambda=((2n-2k,2k-1,1)$ for $1 \leq k \leq \lfloor n/2 \rfloor$. 
\end{itemize}
\end{definition}

\begin{theorem}\label{thm:cells}
The set $\FC(B_n)$ is a disjoint union of two-sided 
combinatorial cells 
of 
admissible domino shapes $(2n-k,k)$,
$0\le k\le n$, and 
$(2n-2k, 2k-1, 1)$, $1 \leq k\le \lfloor n/2\rfloor$. 
\end{theorem}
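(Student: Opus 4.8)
The plan is to prove the two inclusions separately, but in a way that lets a cardinality count finish the job. Since the Barbash--Vogan correspondence $w\mapsto(\sP(w),\sQ(w))$ is a bijection from $B_n$ onto pairs of domino tableaux of equal shape, the cell ${\mathcal C}_\lambda$ is exactly the preimage of $\DSYT(\lambda)\times\DSYT(\lambda)$, so $|{\mathcal C}_\lambda|=(f_2^\lambda)^2$ and distinct shapes give disjoint cells. Consequently it suffices to prove the single inclusion
\[
\FC(B_n)\subseteq\bigcup_{\lambda\ \mathrm{admissible}}{\mathcal C}_\lambda,
\]
and then to check that the right-hand side has cardinality $|\FC(B_n)|$; equality of a subset with a set of the same (finite) cardinality forces $\FC(B_n)$ to be the full union, which is a disjoint union of the stated cells.

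\textbf{The inclusion.} The first observation is that the invariant governing cells depends only on a classical shape: the jeu de taquin slides in the second step of the algorithm move entries within the fixed Young diagram of $P_0(w)$, so $\shape(\sP(w))=\shape(P_0(w))$ equals the ordinary RSK shape of the palindromic word $w^0$ (read with $-n<\cdots<-1<1<\cdots<n$). By the Greene--Kleitman description of RSK shapes it therefore suffices to control the lengths of decreasing subsequences of $w^0$. First I would show, purely from Proposition~\ref{prop:fc1}, that an FC element $w$ has \emph{no} decreasing subsequence of length $3$ in its one-line notation: a decreasing triple with a positive first entry produces a forbidden $[a,b,c]$ with $|a|>b>c$, while a decreasing triple of negatives forces three negatives of increasing absolute value, hence a forbidden $[-1,-2]$. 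Using the antisymmetry $w^0_i=-w^0_{2n+1-i}$, a decreasing subsequence of $w^0$ splits into a ``left'' and a ``right'' part, each of which corresponds to a decreasing subsequence of $w$; this already bounds the number of rows of $\shape(\sP(w))$.

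\textbf{Pinning the admissible shapes.} To identify the shapes exactly I would invoke the heap dichotomy of Proposition~\ref{heaps-typeB}. For an alternating element I expect $w^0$ to be a union of two increasing subsequences, forcing at most two rows and hence a shape $(2n-k,k)$ with $0\le k\le n$. For a left-peak element, Remark~\ref{one line notation of left peak} gives the rigid one-line structure $w(1)=1$ with a single negative entry $-k$ ($k>1$); this produces exactly one length-$3$ decreasing subsequence in $w^0$ and, when fed through RSK, yields precisely the three-row shape $(2n-2k,2k-1,1)$ with $1\le k\le\lfloor n/2\rfloor$. The main obstacle is this last step: ruling out four or more rows and showing that the three-row shapes cannot be arbitrary but must have third row of length $1$ (equivalently, that the mixed left/right decreasing subsequences of $w^0$ never reach length $4$ and are tightly constrained by the palindromic symmetry). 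This is the technical heart and requires the careful case analysis sketched above.

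\textbf{Counting finish.} Granting the inclusion, I would close with Corollary~\ref{cor:degree}, which gives
\[
\sum_{k=0}^{n}\bigl(f_2^{(2n-k,k)}\bigr)^2+\sum_{k=1}^{\lfloor n/2\rfloor}\bigl(f_2^{(2n-2k,2k-1,1)}\bigr)^2=\binom{2n}{n}+\Bigl(\tfrac{1}{n+1}\binom{2n}{n}-1\Bigr)=\tfrac{n+2}{n+1}\binom{2n}{n}-1.
\]
By Remark~\ref{number of alternatings and LP} this is exactly $|\FC(B_n)|$. Since $|{\mathcal C}_\lambda|=(f_2^\lambda)^2$, the union of admissible cells has cardinality $|\FC(B_n)|$, so the inclusion is an equality and $\FC(B_n)$ decomposes as the asserted disjoint union of combinatorial cells.
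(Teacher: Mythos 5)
Your overall architecture coincides with the paper's: establish the inclusion $\FC(B_n)\subseteq\bigcup_\lambda \mathcal{C}_\lambda$ over admissible shapes $\lambda$, then close with the cardinality count from Corollary~\ref{cor:degree} and Remark~\ref{number of alternatings and LP}. The counting finish is correct and is exactly what the paper does. But the inclusion itself --- which is the entire technical content of the theorem --- is not proven in your proposal; it is deferred. Your only concrete argument is that $w$ in one-line notation is $321$-avoiding (true, by Proposition~\ref{prop:fc1}), so a decreasing subsequence of $w^0$ splits into a left part and a right part, each of length at most $2$. That bounds decreasing subsequences of $w^0$ by length $4$, hence (by Schensted) the height of the shape by $4$, not $3$; and it says nothing about the second column. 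The two facts actually needed --- no decreasing subsequence of length $4$ in $w^0$, and no two \emph{disjoint} decreasing subsequences of length $3$ in $w^0$, which by Greene's theorem~\cite{Gre74} bounds the first two columns and forces any third row to have length $1$ --- are precisely what the paper establishes in its Claims 1--3, through a case analysis of mixed left/right subsequences using the forbidden patterns $[-1,-2]$, $|a|>b>c$, $-b>|a|>c$ together with the palindromic symmetry $w^0_i=-w^0_{2n+1-i}$. You call this ``the technical heart'' and refer to ``the careful case analysis sketched above,'' but no such analysis appears above; this is a genuine gap, not a compression.

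The alternative route you sketch (pinning the shapes via the heap dichotomy of Proposition~\ref{heaps-typeB}) does not repair the gap, and parts of it are false. For a left-peak element you claim that the single negative entry $-k$ produces ``exactly one length-$3$ decreasing subsequence in $w^0$'' and RSK shape $(2n-2k,2k-1,1)$. Take $w=[1,-2]\in\FC(B_2)$, a left-peak whose negative entry is $-2$: then $w^0=[2,-1,1,-2]$ contains two decreasing triples, $(2,-1,-2)$ and $(2,1,-2)$, and its RSK shape is $(2,1,1)$, which is $(2n-2k,2k-1,1)$ with $k=1$, not $k=2$; with your indexing the claimed shape $(0,3,1)$ is not even a partition. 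Likewise, the assertion that an alternating element's $w^0$ is a union of two increasing subsequences is only stated as an expectation; in the paper the corresponding statement (Corollary~\ref{cor:shapesLP}) is \emph{deduced from} Theorem~\ref{thm:cells} by a counting argument, so invoking it here would be circular unless you prove it independently. In short: keep your frame, but you must supply proofs of the analogues of the paper's Claims 1--3 before the counting step can close the argument.
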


\begin{proof}
For every element $w\in B_n$ the domino shape of $w$ is the 
common shape of the corresponding pair of domino tableaux. 
This is, in turn, the shape of the SYT corresponding to its palindromic 0-core representation  $w^0\in S_{[\pm n]}$ under the RSK bijection.
We have to show that $w\in \FC(B_n)$ if and only if its shape is admissible. Here, by the shape of $w$ we mean the shape of its image under the above extension of the RSK algorithm.


\smallskip

First, we will show that the domino shape of any $w\in \FC(B_n)$ is admissible, namely
the height of the domino shape is $\le 3$ and the length of the second column 
is $\le 2$.
Since the height of the shape of $\pi\in S_{2n}$ is equal to the maximal length of decreasing subsequence in $\pi$ (see~\cite{Sch61}), it suffices to 
prove Claim 1 below. 
By~\cite{Gre74}, the total length 
of the first two columns is the maximal size of two disjoint decreasing subsequence in $w^0$. Since no decreasing subsequence of length 4 appears in $w^0$,
it suffices to show that there are no two disjoint subsequences 
of length 3 in $w^0$. This will be proved in Claims 2 and 3.

\smallskip

\noindent{\bf Claim 1.} For every $w\in \FC(B_n)$,
there is no decreasing subsequence 
of length $4$ in $w^0$. 

\smallskip
\noindent{\it Proof of Claim 1.} 
Let  $(d,c,b,a)$ be a decreasing subsequence of length 4 in $w^0$. If the position of $b$ is in the right half then if $b<0$, the subsequence $(b,a)$ violates Proposition~\ref{prop:fc1}, since the pattern $[-1,-2]$ is forbidden; if $b>0$ then $0>-c>-d$ and $(-c,-d)$ is a forbidden subsequence in the right half. 
If the position of $b$ is in the left half then $(-b,-c,-d)$ is
a decreasing subsequence in the right half, violating Proposition~\ref{prop:fc1}, since the pattern $[3,2,1]$ is forbidden.
This proves that for every $w\in \FC(B_n)$ the height of the corresponding domino shape of $w$ is $\le 3$. Proof of Claim 1 is completed.

\smallskip

\noindent{\bf Claim 2.} For every $w\in \FC(B_n)$, any
decreasing subsequence of length 3 in $w^0$ is contained in a subsequence whose pattern is $[2,-1,1,-2]$ and having its last two letters in the right half of $w^0$.

\smallskip
\noindent{\it Proof of Claim 2.} 
Notice that for every decreasing subsequence of length $3$ in $w^0$ $(c,b,a)$, $c>b>a$, the sequence $(-a,-b,-c)$ is also a decreasing subsequence of length $3$ in $w^0$.
 Without loss of generality, $(c,b,a)$ 
contains a decreasing subsequence of length $2$ in the right half,
since by definition of $w^0$, if the first two letters in  $(c,b,a)$ are in the left half then $(-a,-b,-c)$ is a decreasing subsequence in $w^0$ with two letters in the right half. Since $[3,2,1]$ is forbidden in $w$, we may assume that the 
subsequence $(c,b,a)$  contains one letter in the left half of $w^0$.
Since $[-1,-2]$ is forbidden, the
second letter $b$ in the subsequence is positive, thus  $0<b<c$. 
If $c \ne -a$ then  the right half contains a shuffle of $-c$ with $(b,a)$, where $c>b>a$, violating Proposition~\ref{prop:fc1}.
%
%
Thus $-a=c>b$ and both $(c,b,a)$ and $(-a,-b,-c)$ are  contained in $[-a,-b,b,a]$ with two letters in the right half.
This completes the proof of Claim 2.


\smallskip
\noindent{\bf Claim 3.} For every $w\in \FC(B_n)$,
there are no two disjoint subsequences 
of length 3 in $w^0$. 

\smallskip
\noindent{\it Proof of Claim 3.} 
By Claim 2, the 
existence of two disjoint decreasing subsequences
of length 3 forces existence of two disjoint subsequences $(a,-b)$
and $(x,-y)$, with $0<a<b$ and $0<x<y$,  in the right half. Otherwise,
either $y=b$ or $x=a$. If $y=b$ both decreasing subsequences end with  $-b=-y$, thus not disjoint. If $x=a$ then, without loss of generality,  the right half contains
the subsequence $(a, -b, -y)$ of patterns $[1,-2,-3]$ or $[1,-3,-2]$; by  Proposition~\ref{prop:fc1}, both patterns are forbidden.
 
Consider all shuffles of $(a,-b)$ with $(x,-y)$ in the right half.
Without loss of generality, the first letter in the shuffle is $a$, 
thus there exists a subsequence in the right half of $w^0$  of one of the following  
forms: 
$$(a,-b,x,-y)\;\mbox{or}\;(a,x,-b,-y)\;\mbox{or}\; (a,x,-y,-b).$$
Since $[3,2,1]$ is forbidden, we are forced to assume that $a<x$. 
Also, $[-1,-2]$ is forbidden, forcing $b>y$ in the two first above forms
and $y>b$ in the third one. 
Altogether we deduce that $a<x<y<b$
in the first two forms; and $a<b<y$, $a<x<y$ in the last one. Thus, the first two forms are of patterns
$[1,-4,2,-3]$ and $[1,2,-4,-3]$ respectively, and 
the pattern of the last form is either
$[1, 2, -4, -3]$ or $[1,3,-4,-2]$. In all cases, the last three letters violate one of the forbidden patterns in 
Proposition~\ref{prop:fc1}. This completes the proof of Claim 3.

\smallskip

\smallskip

We have proved that every $w\in \FC(B_n)$ has an admissible shape.
To finish the proof, notice that
by Corollary~\ref{cor:degree}, the sum of the squares of the number of domino tableaux of all admissible shapes is equal to
\begin{equation}\label{eq:total}
\sum\limits_{k=0}^n (f_2^{(2n-k,k)})^2+\sum\limits_{k=1}^{\lfloor n/2\rfloor}(f_2^{(2n-2k,2k-1,1)})^2= \frac{n+2}{n+1}\binom{2n}{n}-1,
\end{equation}
which is equal by Remark~\ref{number of alternatings and LP} to  
the number of elements in $\FC(B_n)$. One concludes that
the total size of combinatorial two-sided cells of 
admissible shapes in $B_n$ 
is equal to the size of $\FC(B_n)$. 
Since all FC elements are of admissible shapes,
this shows that there are no elements in $B_n\setminus \FC(B_n)$ whose shape is admissible, completing the proof.

\end{proof}

\begin{remark}\label{rem:BGIL} It was conjectured by Bonnaf\'e, Geck, Inacu, and Lam~\cite[Conjecture A(c)]{Lam} that   Kazhdan-Lusztig two-sided cells of type $B_n$ with unequal parameters are  two-sided combinatorial cells, see also~\cite{Bonnafe}. By Green-Losonczy Theorem~\cite[Thm. 3.1.1]{GL}, Theorem~\ref{thm:cells} (with no explicit description of the shapes) follows from this conjecture.
\end{remark}

We conclude now that the Barbash--Vogan bijection described above preserves the division of the set of FC elements into alternating and left-peak elements. 

\begin{corollary}\label{cor:shapesLP}
We have the following equivalences:
\begin{itemize}
    \item  $w\in \FC(B_n)$ is a left-peak if and only if $shape(\sP(w))=(2n-2k,2k-1,1)$, for some  $k$.
    \item $w\in \FC(B_n)$ is alternating if and only if $shape(\sP(w))=(2n-k,k)$, for some $k$.
\end{itemize}
\end{corollary}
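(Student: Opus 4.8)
The plan is to play off against each other the two partitions of $\FC(B_n)$ already established, and then to pin down how they match using a single explicit inclusion together with a cardinality count. By Proposition~\ref{heaps-typeB}, $\FC(B_n)$ is the disjoint union of the alternating elements and the left-peaks. On the other hand, Theorem~\ref{thm:cells} writes $\FC(B_n)$ as a disjoint union of combinatorial cells whose (admissible) shapes are either of the type $(2n-k,k)$ with $0\le k\le n$ or of the type $(2n-2k,2k-1,1)$ with $1\le k\le \lfloor n/2\rfloor$. I would set $\mathcal{A}$ to be the union of the cells of the first type and $\mathcal{L}$ the union of those of the second type, so that $\FC(B_n)=\mathcal{A}\uplus\mathcal{L}$, and the goal becomes identifying $\mathcal{A}$ with the alternating elements and $\mathcal{L}$ with the left-peaks.

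First I would record the relevant cardinalities. Since each cell $\mathcal{C}_\lambda$ is in bijection with pairs of domino tableaux of shape $\lambda$ (Proposition~\ref{prop: Taskin_Des}), Corollary~\ref{cor:degree} gives $|\mathcal{A}|=\sum_k\big(f_2^{(2n-k,k)}\big)^2=\binom{2n}{n}$ and $|\mathcal{L}|=\sum_k\big(f_2^{(2n-2k,2k-1,1)}\big)^2=C_n-1$. By Remark~\ref{number of alternatings and LP}, these are exactly the numbers of alternating elements and of left-peaks, respectively.

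The crux is then the single inclusion: every left-peak lies in $\mathcal{L}$. Recall (as used in the proof of Theorem~\ref{thm:cells}) that the number of rows of the domino shape of $w$ equals the length of the longest decreasing subsequence of the palindromic word $w^0$, and that every admissible shape has at most three rows, the three-row ones being precisely those of the family $(2n-2k,2k-1,1)$. Hence it suffices to exhibit, for each left-peak $w$, a decreasing subsequence of length $3$ in $w^0$. Here I would invoke Remark~\ref{one line notation of left peak}: a left-peak satisfies $w(1)=1$ and has some position $i>1$ with $w(i)=-k$ for some $k>1$. Reading off $w^0=[-w_n,\dots,-w_1,w_1,\dots,w_n]$, the value $k=-w_i$ occurs at position $n+1-i$ of the left half, the value $-1=-w_1$ at position $n$ (the last entry of the left half), and the value $-k=w_i$ at position $n+i$ of the right half. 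Since $i>1$ these three positions are strictly increasing, while the values satisfy $k>-1>-k$; this is the desired length-$3$ decreasing subsequence, so the shape of $w$ has exactly three rows (at least three from this subsequence, at most three by admissibility) and thus $w\in\mathcal{L}$.

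Finally I would close by counting: the inclusion $\{\text{left-peaks}\}\subseteq\mathcal{L}$ together with $|\{\text{left-peaks}\}|=|\mathcal{L}|=C_n-1$ forces $\{\text{left-peaks}\}=\mathcal{L}$, and taking complements inside $\FC(B_n)=\mathcal{A}\uplus\mathcal{L}$ yields $\{\text{alternating}\}=\mathcal{A}$; both equivalences of the corollary follow at once. The only genuinely non-formal step is the explicit exhibition of the decreasing triple, so the main thing to be careful about is the positional bookkeeping in $w^0$ (in particular, that the hypothesis $i>1$ is exactly what places $k$ strictly before $-1$ in the left half); everything else is a matter of quoting the already established partitions and counts, and in particular one never needs to check directly that no alternating element has a three-row shape, since the cardinality match delivers this for free.
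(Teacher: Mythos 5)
Your proposal is correct and follows essentially the same route as the paper's proof: exhibit a length-$3$ decreasing subsequence in $w^0$ for every left-peak via Remark~\ref{one line notation of left peak} (forcing a three-row, hence $(2n-2k,2k-1,1)$, shape by Theorem~\ref{thm:cells}), then use the cardinality match between Remark~\ref{number of alternatings and LP} and Corollary~\ref{cor:degree} to upgrade this inclusion to an equality, with the alternating case following by complementation. The only difference is that you spell out the positional bookkeeping in $w^0$ and the counting step, which the paper compresses into the phrases ``containing a $[3,2,1]$-pattern'' and ``the cardinality arguments given in Remark~\ref{number of alternatings and LP}''.
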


\begin{proof}
Let $w$ be a left-peak. By Remark~\ref{one line notation of left peak}, the one line notation of $w$ is $w=[1,\dots,-k,\dots]$ for an integer $k>1$. Thus $w^0=[\dots,k,\dots,-1,1,\dots, -k,\dots]$, containing a $[3,2,1]$-pattern, hence the height of the shape of $\sP(w)$ is at least $3$. By Theorem~\ref{thm:cells} and the cardinality arguments given in Remark~\ref{number of alternatings and LP}, we get the first equivalence.
 The second assertion is then a consequence of the first one and Theorem~\ref{thm:cells}.
 \end{proof}

\begin{remark} Three different decompositions of FC$(B_n)$ into disjoint subsets are considered in the present paper. 
The first one into Kazhdan-Lusztig cells is due to Green-Losonczy~\cite{GL}; the second one into Barbash-Vogan combinatorial cells is given in Theorem~\ref{thm:cells}; the third one into fibers is shown in Theorem~\ref{thm:fibers}. Comparing Corollary~\ref{cor:shapesLP} with Corollary~\ref{cor:LP_Alt}, one deduces  
that fibers are, in general, different from combinatorial cells. 
A remaining open problem is whether Kazhdan-Lusztig cells and combinatorial cells, restricted to $\FC(B_n)$, coincide or not. 
A positive answer to this question would solve a special case of~\cite[Conjecture A(c)]{Lam} mentioned in Remark~\ref{rem:BGIL}.
\end{remark}


\section{Equidistribution}\label{sec:equidistribution}

In this section we prove Theorem~\ref{thm:2}.
In order to do this, we introduce an involution on $\FC(B_n)$, relying on the decomposition into fibers from Section~\ref{sec:fibers} and on the properties of an involution due to Rubey. 
Throughout this section, for a signed permutation $w$, we set $\Des(w)$ as in~\eqref{eq:DesA}. 
\medskip

In~\cite{Rubey}, Rubey defines an involution $f: S_n(321)\longrightarrow S_n(321)$ satisfying the following properties.
\begin{proposition}\label{prop:Rubey}
For each $\pi\in \FC(S_n)$, we have
\begin{itemize}
\item[$i)$] $\Des(\pi)=\Des(f(\pi))$; 
\item[$ii)$] $\bl(f(\pi)^{-1})=n-\ldes(\pi^{-1})$, equivalently \  $\bl(\pi^{-1})=n-\ldes(f(\pi)^{-1})$. 
\end{itemize}


\end{proposition}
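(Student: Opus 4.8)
The plan is to take Rubey's map $f$ from~\cite{Rubey} as given and to establish the two asserted properties by combining a few elementary identities with a transfer of statistics to the combinatorial model on which Rubey works. First I would record two standard facts. The block number is invariant under inversion: since $i$ lies in the connectivity set of $\pi$ exactly when $\pi([i])=[i]$, and this condition is unchanged by passing to $\pi^{-1}$, we have $\bl(\pi)=\bl(\pi^{-1})$. Next, via the Robinson--Schensted correspondence $\pi\mapsto(P,Q)$ on $\FC(S_n)=S_n(321)$ (where $P,Q$ have a common two-row shape), Schützenberger's theorems give $\Des(\pi)=\Des(Q)$ and $\Des(\pi^{-1})=\Des(P)$, whence $\ldes(\pi^{-1})=\ldes(P)$ is the maximal descent of the insertion tableau. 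These let me move freely between the permutation statistics $\bl(\pi^{-1})$, $\ldes(\pi^{-1})$ and intrinsic features of $\pi$ or of its tableaux.

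Property $i)$ is then immediate: descent-preservation, $\Des(\pi)=\Des(f(\pi))$, is exactly what Rubey's involution is built to guarantee, so nothing further is required there. For property $ii)$ the content is to identify the pair of statistics that $f$ interchanges with the pair $\bigl(\bl(\pi^{-1}),\,n-\ldes(\pi^{-1})\bigr)$. Concretely, I would transport everything to the Dyck-path model under the standard bijection between $S_n(321)$ and Dyck paths of semilength $n$. Under this bijection the block number equals the number of returns of the path to the axis, because the indecomposable (direct-sum-irreducible) factors of $\pi$ correspond to the primitive arches of the path; using $\bl(\pi^{-1})=\bl(\pi)$ this realizes the first statistic. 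The quantity $n-\ldes(\pi^{-1})$ is the complementary path statistic that Rubey's construction pairs with the number of returns, so that $f$ interchanges the two. This yields $\bl(f(\pi)^{-1})=n-\ldes(\pi^{-1})$; the equivalent form follows by applying this identity to $f(\pi)$ and using $f^2=\mathrm{id}$.

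The main obstacle is precisely this statistic-matching step: verifying through the chain of bijections (permutation $\leftrightarrow$ Dyck path, together with the RSK dictionary for the inverse) that Rubey's two interchanged statistics are \emph{exactly} the number of returns, equal to $\bl(\pi^{-1})$, and the complemented last descent $n-\ldes(\pi^{-1})$, with the $n-{}$ normalization and the single inversion accounted for correctly. Once $ii)$ is in hand, the payoff is a clean global reindexing of the generating function by the bijection $f$: substituting $\pi=f(\sigma)$ and invoking $i)$ and $ii)$ gives $\sum_{\pi}q^{\bl(\pi^{-1})}F_{\Des(\pi)}=\sum_{\pi}q^{\,n-\ldes(\pi^{-1})}F_{\Des(\pi)}$, whose right-hand side is manifestly Schur-positive since, under RSK, the factor $q^{\,n-\ldes(P)}$ depends only on $P$ and $F_{\Des(Q)}$ only on $Q$, so the two tableaux decouple. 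This is the mechanism behind Theorem~\ref{thm:ABR} and its use in Section~\ref{sec:equidistribution}.
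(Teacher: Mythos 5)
Your proposal takes essentially the same route as the paper: the paper gives no proof of this proposition at all, simply attributing both properties to Rubey's involution~\cite{Rubey} (which preserves the rise composition of a Dyck path and interchanges the number of returns with the position of the first double fall), implicitly transported to $S_n(321)$. The dictionary you sketch --- $\bl(\pi)=\bl(\pi^{-1})$ equal to the number of returns of the path, and $\Des(\pi^{-1})=\Des(P)$ under RSK so that $n-\ldes(\pi^{-1})$ is the complementary interchanged statistic, with the equivalent form of $ii)$ obtained from $f^2=\mathrm{id}$ --- is exactly the translation that the paper's citation leaves implicit, so your attempt matches (and in detail slightly exceeds) the paper's own treatment.
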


We extend the involution $f$ to a mapping $ \Phi :\FC(B_n)\longrightarrow B_n$ by defining 
\begin{equation}\label{def:Phi}
\Phi(w):=f(\pi^{-1}) \cdot \mu^{-1},
\end{equation}
where $w= \pi^{-1}\cdot\mu^{-1}$, or equivalently $w^{-1}=\mu\cdot \pi$, is the decomposition in~\eqref{eq:decomposition}. Note that, as recalled in Section~\ref{sec:reduced-expressions}, $\pi^{-1}\in S_n(321)$, so the map $\Phi$ is well defined. 

\begin{observation}\label{cor:ldes}
Let $J=S\setminus \{s_0\}$ and recall the definition of the quotient $(B_n)^J$ from~\eqref{quot}.
For every $\mu\in (B_n)^J$, $\Des(\mu)=\emptyset$, so  
left multiplication by $\mu$ 
is order preserving. Hence, for every $\pi$ in the parabolic subgroup $(B_n)_J\cong S_n$  
\[
\Des(\mu\cdot \pi)=\Des(\pi),
\]
thus $\ldes(\mu\cdot \pi)=\ldes(\pi)$. 
\end{observation}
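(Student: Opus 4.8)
The plan is to reduce the statement to the defining property of $(B_n)^J$ recalled in~\eqref{quot}: the one-line notation of any $\mu \in (B_n)^J$ is strictly increasing, $\mu_1 < \mu_2 < \cdots < \mu_n$. In particular there is no index $i$ with $\mu_i > \mu_{i+1}$, so $\Des(\mu) = \emptyset$ in the sense of~\eqref{eq:DesA}; equivalently, the value map $a \mapsto \mu_a$ is strictly increasing on $\{1,\dots,n\}$, which is precisely the meaning of ``order preserving.''

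Next I would compute the one-line notation of the product $\mu \cdot \pi$. Using the composition convention $(\mu\cdot\pi)(i) = \mu(\pi(i))$ fixed after~\eqref{eq:decomposition} (illustrated there by $[1,-3,-2,4]=[-3,-2,1,4]\cdot[3,1,2,4]$), and using that $\pi \in (B_n)_J \cong S_n$ is an honest permutation with $\pi_i \in \{1,\dots,n\}$, I get $(\mu\cdot\pi)_i = \mu_{\pi_i}$. The descent comparison is then immediate: $i \in \Des(\mu\cdot\pi)$ iff $\mu_{\pi_i} > \mu_{\pi_{i+1}}$, which by the strict monotonicity of $a \mapsto \mu_a$ is equivalent to $\pi_i > \pi_{i+1}$, i.e. to $i \in \Des(\pi)$. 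Hence $\Des(\mu\cdot\pi) = \Des(\pi)$ as subsets of $\{1,\dots,n-1\}$.

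For the final claim I would pass from descent sets to last descents. The statistic $\ldes$ is the maximum of the descent set (with value $0$ when it is empty), and for a signed permutation the only possible extra element $0 \in \Des_B$, present exactly when the first entry is negative, is irrelevant to this maximum: it can be the maximum only when the type-$A$ descent set is already empty, in which case $\ldes = 0$ in any event. Taking maxima in $\Des(\mu\cdot\pi) = \Des(\pi)$ therefore gives $\ldes(\mu\cdot\pi) = \ldes(\pi)$. There is no genuine obstacle here; the only point needing a moment's care is fixing the composition convention so that left multiplication by $\mu$ acts as a relabeling of the values through the increasing sequence $\mu_1 < \cdots < \mu_n$, which is exactly what renders it order preserving.
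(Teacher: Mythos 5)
Your proof is correct and follows essentially the same route as the paper's inline argument: since $\mu\in (B_n)^J$ is strictly increasing, left multiplication by $\mu$ acts as an order-preserving relabeling of values, so $\Des(\mu\cdot\pi)=\Des(\pi)$ and hence $\ldes(\mu\cdot\pi)=\ldes(\pi)$. Your extra care in verifying the composition convention $(\mu\cdot\pi)(i)=\mu(\pi(i))$ and in noting that a possible $0\in\Des_B(\mu\cdot\pi)$ cannot affect the maximum are worthwhile details that the paper leaves implicit.
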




\begin{lemma}\label{v-preserving} 
For every $\pi \in \FC(S_n)$,
$B_n(\pi)=B_n(f(\pi^{-1})^{-1})$.
\end{lemma}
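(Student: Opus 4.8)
The plan is to observe that the fiber data $B_n(\rho)$ depends on the permutation $\rho$ only through the inverse descent set $\Des(\rho^{-1})$, and then to reduce the claim to the descent-preserving property of Rubey's involution.

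First I would note that, by the definition of $B_n(\rho)$ given in Theorem~\ref{thm:fibers}, the set is built from exactly two pieces of information: the case split on whether $1\in\Des(\rho^{-1})$, and the value of the first valley $v(\rho)$. Since $v(\rho)$ is defined in Definition~\ref{def:first-valley} purely in terms of $\Des(\rho^{-1})$ (it is either $\min\{\Des(\rho^{-1})\setminus\{1\}\}$ or $n$), the whole set $B_n(\rho)$ is a function of $\Des(\rho^{-1})$ alone. Consequently it suffices to prove that the two permutations $\pi$ and $f(\pi^{-1})^{-1}$ have the same inverse descent set.

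Next I would identify these two descent sets. For $\pi$ the relevant set is $\Des(\pi^{-1})$, whereas for $f(\pi^{-1})^{-1}$ it is $\Des\bigl((f(\pi^{-1})^{-1})^{-1}\bigr)=\Des(f(\pi^{-1}))$. Since $\pi\in\FC(S_n)$ forces $\pi^{-1}\in\FC(S_n)$, Rubey's involution $f$ is defined on $\pi^{-1}$; applying the descent-preservation property of Proposition~\ref{prop:Rubey}(i) with the argument $\pi^{-1}$ yields $\Des(\pi^{-1})=\Des(f(\pi^{-1}))$. Combining these observations, the two relevant inverse descent sets coincide, and hence $B_n(\pi)=B_n(f(\pi^{-1})^{-1})$.

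The only point demanding care is the bookkeeping of inverses: one must track that the second fiber is indexed by $f(\pi^{-1})^{-1}$, whose inverse is $f(\pi^{-1})$, so that the descent set controlling this fiber is exactly $\Des(f(\pi^{-1}))$ — precisely the quantity equated to $\Des(\pi^{-1})$ by Rubey's property (i). Beyond this formal matching there is no genuine obstacle: the statement reduces immediately to the descent-preserving half of Proposition~\ref{prop:Rubey}, with property (ii) playing no role in this particular lemma.
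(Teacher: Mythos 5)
Your proof is correct and follows essentially the same route as the paper's: the paper likewise observes that $B_n(\pi)$ depends only on the left descent set $\Des^L(\pi)=\Des(\pi^{-1})$ (via the case split and the first valley) and then invokes Rubey's descent-preservation applied to $\pi^{-1}$ to conclude $\Des(\pi^{-1})=\Des(f(\pi^{-1}))=\Des^L\bigl(f(\pi^{-1})^{-1}\bigr)$. Your careful bookkeeping of the inverses matches the paper's argument exactly, merely phrased with inverse descent sets instead of left descent sets.
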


\begin{proof}
By definition, $\Des^L(\pi)=\Des(\pi^{-1})$, thus
by Proposition~\ref{prop:Rubey}, $\Des^L(\pi)=\Des^L(f(\pi^{-1})^{-1})$. 
Since, by Theorem~\ref{thm:fibers}, $B_n(\pi)$ depends only on the left descent set of $\pi$, 
the statement holds.
\end{proof}

\begin{lemma}\label{Des-preserving} 
For every $\pi\in \FC(S_n)$ and $\mu\in (B_n)^J$,
\[
\Des_B(\pi^{-1}\cdot \mu^{-1})=\Des_B(f(\pi^{-1})\cdot \mu^{-1}), 
\]
equivalently,
$\Des_B^L(\mu \cdot \pi)=\Des_B^L(\mu \cdot f(\pi^{-1})^{-1})$.
\end{lemma}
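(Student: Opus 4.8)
The plan is to reduce the statement to Rubey's descent-preservation property by directly analysing the right descent set of a product of a type-$A$ permutation with the shuffle $\mu^{-1}$. Since $\Des_B(w)=\Des_B^L(w^{-1})$ and $(\pi^{-1}\mu^{-1})^{-1}=\mu\pi$ while $(f(\pi^{-1})\mu^{-1})^{-1}=\mu f(\pi^{-1})^{-1}$, the two displayed formulations are equivalent, so I would only prove the first. Writing $\alpha:=\pi^{-1}$ and $\beta:=f(\pi^{-1})$, Proposition~\ref{prop:Rubey}$(i)$ applied to $\pi^{-1}$ gives $\Des(\alpha)=\Des(\beta)$. Setting $\nu:=\mu^{-1}$, the goal becomes the assertion that $\Des_B(\alpha\nu)$ depends on $\alpha$ only through its type-$A$ descent set $\Des(\alpha)$.

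To carry this out I would invoke Observation~\ref{obs:mu}: $\nu=\mu^{-1}$ is a shuffle of the increasing block $N=[-k,-k+1,\dots,-1]$ with the increasing block $P=[k+1,k+2,\dots,n]$, where every entry of $N$ is negative and every entry of $P$ is positive. The one-line notation of $\alpha\nu$ is $[\alpha(\nu(1)),\dots,\alpha(\nu(n))]$, where $\alpha\in S_n$ acts on $[\pm n]$ by $\alpha(-m)=-\alpha(m)$ and hence preserves signs. For the $0$-entry, $0\in\Des_B(\alpha\nu)$ iff $\alpha(\nu(1))<0$ iff $\nu(1)<0$, which is independent of $\alpha$. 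For $i\ge 1$, I would split into four cases according to which of $N,P$ contain $\nu(i)$ and $\nu(i+1)$.

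In the two mixed cases the comparison is forced by signs and is independent of $\alpha$: if $\nu(i)\in N$ and $\nu(i+1)\in P$ then $\alpha(\nu(i))<0<\alpha(\nu(i+1))$, so $i\notin\Des_B(\alpha\nu)$; if $\nu(i)\in P$ and $\nu(i+1)\in N$ then $\alpha(\nu(i))>0>\alpha(\nu(i+1))$, so $i\in\Des_B(\alpha\nu)$. In the two same-block cases, each block occurs in $\nu$ as an increasing subsequence, so consecutive positions carrying entries of the same block carry consecutive integers, i.e. $\nu(i+1)=\nu(i)+1$. If both lie in $P$ then $\nu(i)\in\{1,\dots,n-1\}$ and $i\in\Des_B(\alpha\nu)$ iff $\alpha(\nu(i))>\alpha(\nu(i)+1)$ iff $\nu(i)\in\Des(\alpha)$. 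If both lie in $N$, set $m:=-\nu(i+1)\ge 1$, so $\nu(i)=-(m+1)$ and $\nu(i+1)=-m$; then $i\in\Des_B(\alpha\nu)$ iff $-\alpha(m+1)>-\alpha(m)$ iff $m\in\Des(\alpha)$. In every case membership of $i$ in $\Des_B(\alpha\nu)$ is determined by $\Des(\alpha)$ together with $\nu$. Since $\Des(\alpha)=\Des(\beta)$, we conclude $\Des_B(\alpha\nu)=\Des_B(\beta\nu)$, which is the claim.

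The only delicate point is the sign bookkeeping in the negative block, where a descent of $\alpha\nu$ corresponds to a descent of $\alpha$ at the shifted index $m=-\nu(i+1)$ rather than at $\nu(i)$; this is where I would be most careful. The other thing worth making explicit is the elementary but crucial observation that consecutive positions within a single block carry consecutive integer values, which is exactly what allows a type-$B$ descent of $\alpha\nu$ to be read off from a single type-$A$ descent of $\alpha$.
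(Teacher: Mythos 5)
Your proof is correct and follows essentially the same route as the paper's: both use Observation~\ref{obs:mu} to split into the mixed-sign cases (where the descent is forced independently of the permutation) and the same-sign cases (where consecutive positions carry consecutive values, reducing membership in $\Des_B(\alpha\nu)$ to a type-$A$ descent of $\alpha$), then conclude via Proposition~\ref{prop:Rubey}$(i)$ and handle the $0$-descent separately. Your write-up is in fact slightly more careful than the paper's in the negative-block bookkeeping (the shift to $m=-\nu(i+1)$), which the paper compresses into the condition $0<-\mu^{-1}(i+1)\in\Des(\pi)$.
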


\begin{proof}

Let $\mu \in (B_n)^J$. Observe that for each $\pi \in S_n$, if $\mu^{-1}(i)<0$ and $\mu^{-1}(i+1)>0$ ($\mu^{-1}(i)>0$ and $\mu^{-1}(i+1)<0$) then $i\not\in \Des(\pi\cdot \mu^{-1})$ ($i\not\in \Des(\pi\cdot \mu^{-1})$) independently of $\pi \in S_n$. 
On the other hand, if $\mu^{-1}(i)$ and $\mu^{-1}(i+1)$ have the same sign then by Observation~\ref{obs:mu}, $\mu^{-1}(i),\mu^{-1}(i+1)$ have consecutive values. Then for each $\pi \in S_n$,
$i\in \Des(\pi\cdot \mu^{-1})$ if and only if
$0<\mu^{-1}(i) \in \Des(\pi)$ or $0<-\mu^{-1}(i+1)\in \Des(\pi)$.
It follows that in the last case, $i \in \Des(\pi^{-1}\cdot \mu^{-1})$ if and only $\mu^{-1}(i) \in \Des(\pi^{-1})$. By Proposition ~\ref{prop:Rubey}, this is true if and only if  $\mu^{-1}(i) \in \Des(f(\pi^{-1}))$. 
We deduce:
\[
\Des(\pi^{-1}\cdot \mu^{-1})=\Des(f(\pi^{-1})\cdot \mu^{-1}).
\]

To conclude, notice that
\[
0\in \Des_B(\pi^{-1}\cdot \mu^{-1}) \Longleftrightarrow \mu^{-1}(1)<0 \Longleftrightarrow  0\in \Des_B(f(\pi^{-1})\cdot \mu^{-1}). 
\]

\end{proof}

\begin{proposition} \label{thm:Phi} \
For every $w \in \FC(B_n)$ 
\begin{itemize}
\item[(i)] $\Phi(w)\in \FC(B_n)$;
\item[(ii)]  $\Des_B(w)=\Des_B(\Phi(w))$;
\item[(iii)] $\bl(w^{-1})=n-\ldes(\Phi(w)^{-1})$;
\item[(iv)] ${\rm Neg}(w)={\rm Neg}(\Phi(w))$.
\end{itemize}
\end{proposition}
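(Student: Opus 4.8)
The plan is to exploit the factorization that defines $\Phi$. Write the parabolic decomposition $w^{-1}=\mu\cdot\pi$ with respect to $J=S\setminus\{s_0\}$, so that $\mu\in (B_n)^J$ and $\pi\in (B_n)_J\cong S_n$; since $w$, hence $w^{-1}$, is FC, Theorem~\ref{thm:fibers} gives $\pi\in\FC(S_n)$ and $\mu\in B_n(\pi)$. By definition $\Phi(w)=f(\pi^{-1})\cdot\mu^{-1}$, so $\Phi(w)^{-1}=\mu\cdot f(\pi^{-1})^{-1}$, which is again a parabolic decomposition because $f(\pi^{-1})^{-1}\in S_n$. Each of the four items will then be extracted from one of the lemmas already proved about the interaction of $\mu$ and $\pi$.

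For item (i), note that $f(\pi^{-1})^{-1}\in\FC(S_n)$, since $f$ preserves $S_n(321)$ and inversion preserves full commutativity. By Lemma~\ref{v-preserving} we have $B_n(f(\pi^{-1})^{-1})=B_n(\pi)$, and this set contains $\mu$; hence by Theorem~\ref{thm:fibers} the element $\Phi(w)^{-1}=\mu\cdot f(\pi^{-1})^{-1}$ lies in $\FC(B_n)$, and therefore so does $\Phi(w)$. Item (ii) is exactly the content of Lemma~\ref{Des-preserving} applied to the pair $(\pi,\mu)$: it asserts $\Des_B(\pi^{-1}\cdot\mu^{-1})=\Des_B(f(\pi^{-1})\cdot\mu^{-1})$, which is precisely $\Des_B(w)=\Des_B(\Phi(w))$.

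For item (iii) I would first record a block-number analogue of Observation~\ref{cor:ldes}. Since $\mu(1)<\cdots<\mu(n)$, left multiplication by $\mu$ is order-preserving on the positive values $\{1,\dots,n\}$ taken by $\pi$, so a prefix of $\mu\cdot\pi$ is order-separated from the complementary suffix exactly when the same holds for $\pi$; consequently $\bl(\mu\cdot\pi)=\bl(\pi)$ for every $\mu\in (B_n)^J$ and $\pi\in S_n$. Applied to $w^{-1}=\mu\cdot\pi$ this gives $\bl(w^{-1})=\bl(\pi)$. Using Proposition~\ref{prop:Rubey}(ii) in its equivalent form $\bl(\rho^{-1})=n-\ldes(f(\rho)^{-1})$ with $\rho=\pi^{-1}$, we obtain $\bl(\pi)=n-\ldes(f(\pi^{-1})^{-1})$. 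Finally Observation~\ref{cor:ldes} yields $\ldes(\Phi(w)^{-1})=\ldes(\mu\cdot f(\pi^{-1})^{-1})=\ldes(f(\pi^{-1})^{-1})$, and chaining these three equalities gives $\bl(w^{-1})=n-\ldes(\Phi(w)^{-1})$. This item carries the real content, and the step I expect to require the most care is verifying the block-number invariance $\bl(\mu\cdot\pi)=\bl(\pi)$ together with correctly matching the inverses when invoking Rubey's identity.

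Item (iv) is then immediate. Writing $w=\pi^{-1}\cdot\mu^{-1}$ with $\pi^{-1}\in S_n$ carrying no signs, the sign of $w(i)=\pi^{-1}(\mu^{-1}(i))$ equals the sign of $\mu^{-1}(i)$, so $\Neg(w)=\Neg(\mu^{-1})$; the identical computation for $\Phi(w)=f(\pi^{-1})\cdot\mu^{-1}$, with $f(\pi^{-1})\in S_n$ again unsigned, gives $\Neg(\Phi(w))=\Neg(\mu^{-1})$. Hence $\Neg(w)=\Neg(\Phi(w))$, completing the plan.
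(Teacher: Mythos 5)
Your proposal is correct and follows essentially the same route as the paper: the same fiber decomposition $w^{-1}=\mu\cdot\pi$, with (i) from Lemma~\ref{v-preserving} and Theorem~\ref{thm:fibers}, (ii) from Lemma~\ref{Des-preserving}, (iii) from Proposition~\ref{prop:Rubey} and Observation~\ref{cor:ldes}, and (iv) from sign-preservation under left multiplication by elements of $S_n$. The only difference is that you make explicit the invariance $\bl(\mu\cdot\pi)=\bl(\pi)$, which the paper leaves implicit in its appeal to the order-preserving property of $\mu$; this is a welcome clarification rather than a different argument.
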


\begin{proof} \
\begin{itemize}
\item[(i)]  Following~\eqref{disjoint_union}, we write uniquely $w^{-1}=\mu\cdot\pi$
with $\mu\in B_n(\pi)$.  
By Lemma~\ref{v-preserving},
\[
\mu\in B_n(\pi)= B_n(f(\pi^{-1})^{-1}).
\]
By definition, 
\[
\Phi(w)^{-1}=\mu \cdot f(\pi^{-1})^{-1}\in 
B_n(\pi)\cdot f(\pi^{-1})^{-1} = B_n(f(\pi^{-1})^{-1})  \cdot f(\pi^{-1})^{-1} \subseteq \FC(B_n).
\]
The last containment follows from Theorem~\ref{thm:fibers}. 
Hence, as mentionned in Section~\ref{sec:reduced-expressions},
$\Phi(w)\in \FC(B_n)$.
\smallskip

\item[(ii)] One can write 
 $$ \Des_B(w) = \Des_B^L(w^{-1})= \Des_B^L(\mu\cdot\pi) = \Des_B^L(\mu \cdot f(\pi^{-1})^{-1})=\Des_B((f(\pi^{-1})\cdot \mu^{-1})=\Des_B(\Phi(w)),$$  
 where the third equality follows from Lemma~\ref{Des-preserving} .
\smallskip

\item[(iii)] The following equalities are derived from Proposition~\ref{prop:Rubey} and Observation~\ref{cor:ldes}:
\begin{eqnarray*}
\bl(w^{-1})=\bl(\mu\cdot \pi)=\bl(\pi)&=&n-\ldes(f(\pi^{-1})^{-1})\\
&=&n-\ldes(\mu \cdot f(\pi^{-1})^{-1})\\
&=&n-\ldes(\Phi(w)^{-1}).
\end{eqnarray*}

\smallskip

\item[(iv)] Multiplying a signed permutation on the left by a permutation in $S_n$ does not change the positions of the negative entries.  Hence the result follows from the definition of $\Phi$.
\end{itemize}
\end{proof}

\begin{remark}\label{involution}
As $f$ is an involution, by Theorem~\ref{thm:Phi}(i),  the map $\Phi$ is an involution on $\FC(B_n)$. 
\end{remark}

Now we are ready to prove our equidistribution result given in Theorem~\ref{thm:2}.

\begin{proof}[Proof of Theorem~\ref{thm:2}]
By Remark~\ref{involution} together with Proposition~\ref{thm:Phi}(ii)--(iv),
$\Phi$ is an involution on $\FC(B_n)$ which maps the left hand side 
to the right hand side.
\end{proof}

\section{Proof of the main theorem}\label{sec:proofMain}

Applying the vector space homomorphism from the ring of quasi-symmetric functions to the multilinear subspace of the formal power series ring $\ZZ[x_1,x_2,\ldots]$, defined by $F_{n,J}\mapsto {\bf x}^J$, and using the fact that for every $\pi\in S_n$, $\bl(\pi^{-1})=\bl(\pi)$,  
Theorem~\ref{thm:ABR} is equivalent to the following equation
\begin{equation}\label{eq:A}
\sum_{\pi \in \FC(S_n)}{\bf x}^{\Des(\pi)}q^{\bl(\pi^{-1})}=
\sum_{k=0}^{\lfloor n/2 \rfloor} \sum_{(P,Q)\in \SYT^2(n-k,k)}{\bf x}^{\Des(Q)} q^{n-\ldes(P)},
\end{equation}
where the right sum is over pairs of tableaux of shape $(n-k,k)$. 

In this section we use Theorem~\ref{thm:2} to prove the type $B$ analogue, namely 
Theorem~\ref{thm:main}. 
The first step is to present the corresponding equidistribution for type $B$ in the language of domino tableaux.

\begin{theorem}\label{thm:3} For any positive integer $n$ we have 
\begin{multline}
\sum\limits_{w\in \FC(B_n)} {\bf x}^{\Des_B(w)}q^{\bl(w^{-1})} =
\sum\limits_{k=0}^n \sum\limits_{(\sP,\sQ)\in \DSYT^2(2n-k,k)}{\bf x}^{\Des_B(\sQ)}q^{n-\ldes(\sP)}.
\\
+ \sum\limits_{k=1}^{\lfloor n/2\rfloor} \sum\limits_{(\sP,\sQ)\in \DSYT^2(2n-2k,2k-1,1)}{\bf x}^{\Des_B(\sQ)}
q^{n-\ldes(\sP)}.
\end{multline}
\end{theorem}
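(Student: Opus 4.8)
The plan is to obtain Theorem~\ref{thm:3} by threading together three results already proved: the equidistribution of Theorem~\ref{thm:2}, the Barbash--Vogan correspondence recorded in Proposition~\ref{prop: Taskin_Des}, and the cellular decomposition of Theorem~\ref{thm:cells}. First I would specialize Theorem~\ref{thm:2} by setting ${\bf z}=1$ and $t=1$. This collapses the four-variable identity to
\[
\sum_{w\in \FC(B_n)} {\bf x}^{\Des_B(w)}q^{\bl(w^{-1})}
=\sum_{w\in \FC(B_n)} {\bf x}^{\Des_B(w)}q^{n-\ldes(w^{-1})},
\]
so the left-hand side of the theorem is rewritten as a generating function in which $\bl(w^{-1})$ is traded for $n-\ldes(w^{-1})$. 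It is this second form that the domino-tableaux machinery can digest.

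Next I would transport the statistics through the extended RSK map $w\mapsto(\sP(w),\sQ(w))$. By Proposition~\ref{prop: Taskin_Des} one has $\Des_B(w)=\Des_B(\sQ(w))$ and $\Des_B(w^{-1})=\Des_B(\sP(w))$. Since $\ldes$ is the maximal element of the type $B$ descent set, the second identity yields $\ldes(w^{-1})=\ldes(\sP(w))$, and therefore
\[
\sum_{w\in \FC(B_n)} {\bf x}^{\Des_B(w)}q^{n-\ldes(w^{-1})}
=\sum_{w\in \FC(B_n)} {\bf x}^{\Des_B(\sQ(w))}q^{n-\ldes(\sP(w))}.
\]

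Finally I would invoke Theorem~\ref{thm:cells} to split $\FC(B_n)$ as the disjoint union of the two-sided combinatorial cells ${\mathcal C}_\lambda$ over the admissible shapes $\lambda=(2n-k,k)$, $0\le k\le n$, and $\lambda=(2n-2k,2k-1,1)$, $1\le k\le\lfloor n/2\rfloor$. For each such $\lambda$ the entire cell ${\mathcal C}_\lambda$ is contained in $\FC(B_n)$, and the extended RSK map restricts to a bijection from ${\mathcal C}_\lambda$ onto $\DSYT(\lambda)\times\DSYT(\lambda)$. Because the summand ${\bf x}^{\Des_B(\sQ(w))}q^{n-\ldes(\sP(w))}$ depends on $w$ only through the pair $(\sP(w),\sQ(w))$, summing over each cell and transporting through this bijection turns the sum over $w$ into a sum over pairs $(\sP,\sQ)\in\DSYT(\lambda)^2$; separating the two families of admissible shapes produces precisely the two sums on the right-hand side of Theorem~\ref{thm:3}.

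Once the deep inputs are in place the argument is mostly bookkeeping, so there is no single hard step. The one point that deserves care is the identity $\ldes(w^{-1})=\ldes(\sP(w))$: one must check that extracting the maximal descent is compatible with the $\Des_B$-preserving correspondence even though $\Des_B$ may contain $0$. This is harmless, since $0$ can be the maximal descent only when the descent set is contained in $\{0\}$, and in that degenerate case $\ldes$ equals $0$ on both sides by convention.
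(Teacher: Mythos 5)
Your proof is correct and follows essentially the same route as the paper's: it combines the cell decomposition of Theorem~\ref{thm:cells}, the statistic-preserving Barbash--Vogan bijection (Proposition~\ref{prop: Taskin_Des}, which the paper packages as Theorem~\ref{thm:T}), and the equidistribution of Theorem~\ref{thm:2}. The only cosmetic difference is the order of operations: you apply Theorem~\ref{thm:2} first to trade $\bl(w^{-1})$ for $n-\ldes(w^{-1})$, whereas the paper first derives the two-variable $({\bf x},{\bf y})$ identity over the cells and invokes Theorem~\ref{thm:2} at the very end.
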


\begin{proof}

Recall from Theorem~\ref{thm:cells} that the set $\FC(B_n)$ is a union of 
combinatorial cells 
corresponding to 
the domino tableaux of the shapes $(2n-k,k)$ and $(2n-2k,2k-1,1)$. 

By Theorem~\ref{thm:cells} together with Equation~\eqref{eq:phi_cell-bitableaux},
\[
\begin{aligned}
\sum\limits_{w\in \FC(B_n)} {\bf x}^{\Des_B(w)}{\bf y}^{\Des_B(w^{-1})} &=
\sum\limits_{k=0}^n \sum\limits_{(\sP,\sQ)\in \DSYT^2(2n-k,k)}{\bf x}^{\Des_B(\sQ)}
{\bf y}^{\Des_B(\sP)}
\\
&+ \sum\limits_{k=1}^{\lfloor n/2\rfloor} \sum\limits_{(\sP,\sQ)\in \DSYT^2(2n-2k,2k-1,1)
}{\bf x}^{\Des_B(\sQ)}
{\bf y}^{\Des_B(\sP)}.
\end{aligned}
\]
Applying the map  ${\bf y}^J\mapsto q^{n-j_t}$, where $J=\{j_1<j_2<\cdots<j_t\}\subseteq [0,n-1]$, together with Theorem~\ref{thm:2} completes the proof. 
\end{proof}

Next, we deduce the following consequence, which is the translation of Theorem~\ref{thm:3} to the language of bi-tableaux. In order to give a more elegant version of this result, we consider here the equidistribution over $\FC(B_n) \setminus \FC(S_n)$ rather than over $\FC(B_n)$. 

\begin{corollary}\label{thm:4} For any positive integer $n$ we have
\begin{multline}\label{eq:4}
\sum_{w\in \FC(B_n)\setminus \FC(S_n)} {\bf x}^{\Des_B(w)}q^{\bl(w^{-1})} =
\sum_{k=1}^{\lfloor n/2\rfloor}\sum_{(P,Q)\in \BSYT^2((k),(n-k))}
{\bf x}^{\Des_B(Q)}q^{n-\ldes(P)}\\
+\sum_{k=0}^{\lfloor (n-1)/2\rfloor}\sum_{(P,Q)\in \BSYT^2((n-k),(k))}{\bf x}^{\Des_B(Q)}q^{n-\ldes(P)}.
\end{multline}
\end{corollary}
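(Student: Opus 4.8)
The plan is to transport the domino-tableaux identity of Theorem~\ref{thm:3} to bi-tableaux via the Carr\'e--Leclerc correspondence, and then to excise the contribution of $\FC(S_n)$.

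First I would apply the $\Des_B$-preserving bijection of Lemma~\ref{lemma:MV} to the domino shapes appearing on the right-hand side of Theorem~\ref{thm:3}. Since the bijection preserves the entire set $\Des_B$, it preserves both the monomial ${\bf x}^{\Des_B(\sQ)}$ and the statistic $\ldes=\max\Des_B$ (when $0$ lies in a descent set it is its least element, so it never affects the maximum); hence both factors ${\bf x}^{\Des_B(\sQ)}$ and $q^{n-\ldes(\sP)}$ carry over verbatim. Using the $2$-quotients recorded in Remark~\ref{rem:bijections}, the shape $(2n-k,k)$ with $k=2k'$ even yields $\BSYT^2((k'),(n-k'))$ for $0\le k'\le\lfloor n/2\rfloor$, the same shape with $k=2k'+1$ odd yields $\BSYT^2((n-k'),(k'))$ for $0\le k'\le\lfloor(n-1)/2\rfloor$, and the shape $(2n-2k,2k-1,1)$ yields $\BSYT^2(\emptyset,(n-k,k))$ for $1\le k\le\lfloor n/2\rfloor$.

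Next I would compare these three families with the two sums in~\eqref{eq:4}. The odd family reproduces exactly the second sum of Corollary~\ref{thm:4}; the even family gives its first sum together with one extra term, the case $k'=0$, namely $\BSYT^2(\emptyset,(n))$. Consequently the bi-tableau form of Theorem~\ref{thm:3} equals the right-hand side of~\eqref{eq:4} plus the ``empty-lower-component'' contribution
\[
\sum_{k=0}^{\lfloor n/2\rfloor}\ \sum_{(P,Q)\in\BSYT^2(\emptyset,(n-k,k))}{\bf x}^{\Des_B(Q)}\,q^{n-\ldes(P)},
\]
whose $k=0$ summand is $\BSYT^2(\emptyset,(n))$. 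For a bi-tableau $T$ of bi-shape $(\emptyset,\nu)$ one has $1\in T_{\lambda^+}$, so by~\eqref{eq:bi_desB} $\Des_B(T)=\Des(T)$ is simply the descent set of the standard Young tableau of shape $\nu$; hence this contribution coincides with
\[
\sum_{k=0}^{\lfloor n/2\rfloor}\ \sum_{(P,Q)\in\SYT^2(n-k,k)}{\bf x}^{\Des(Q)}\,q^{n-\ldes(P)},
\]
which by~\eqref{eq:A} equals $\sum_{\pi\in\FC(S_n)}{\bf x}^{\Des(\pi)}q^{\bl(\pi^{-1})}$.

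Finally I would identify this with the part of the left-hand side coming from $\FC(S_n)$. Every $w\in\FC(S_n)\subseteq\FC(B_n)$ has $\Neg(w)=\emptyset$, so $0\notin\Des_B(w)$ and $\Des_B(w)=\Des(w)$; thus the contribution of $\FC(S_n)$ to the left-hand side of Theorem~\ref{thm:3} is precisely $\sum_{w\in\FC(S_n)}{\bf x}^{\Des(w)}q^{\bl(w^{-1})}$, the very quantity computed above. Subtracting this common term from both sides of the bi-tableau form of Theorem~\ref{thm:3} leaves $\sum_{w\in\FC(B_n)\setminus\FC(S_n)}{\bf x}^{\Des_B(w)}q^{\bl(w^{-1})}$ on the left and the right-hand side of~\eqref{eq:4} on the right. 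The main point requiring care is the index bookkeeping of the $2$-quotients and the verification that deleting $\FC(S_n)$ removes exactly the empty-lower-component families; every other step is a direct transport along Lemma~\ref{lemma:MV} together with an application of~\eqref{eq:A}.
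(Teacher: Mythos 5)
Your proposal is correct and follows essentially the same route as the paper: both convert the domino sums of Theorem~\ref{thm:3} into bi-tableau sums via Remark~\ref{rem:bijections} (Lemma~\ref{lemma:MV}), identify the ``empty-lower-component'' families $\BSYT^2(\emptyset,(n-k,k))$ with the $\SYT^2(n-k,k)$ sums, and cancel these against the $\FC(S_n)$ contribution using Equation~\eqref{eq:A}. The only difference is presentational --- you group the $k=0$ term of the even family with the left-peak family before subtracting, whereas the paper subtracts first and leaves the cancellation implicit --- and your explicit check that $\ldes$ transports through the $\Des_B$-preserving bijection is a welcome detail the paper glosses over.
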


\begin{proof}
By Remark~\ref{rem:bijections},
\begin{eqnarray*}
\sum\limits_{k=0}^{n} \sum\limits_{(\sP,\sQ)\in \DSYT^2(2n-k,k)}
{\bf x}^{\Des_B(\sQ)}q^{n-\ldes(\sP)} 
& = &\sum\limits_{k=0}^{\lfloor n/2\rfloor}\sum\limits_{(\sP,\sQ)\in \DSYT^2(2n-2k,2k)}
{\bf x}^{\Des_B(\sQ)}q^{n-\ldes(\sP)}\\
& + & \sum\limits_{k=0}^{\lfloor (n-1)/2\rfloor}{\sum\limits_{(\sP,\sQ)\in \DSYT^2(2n-2k-1,2k+1)}
{\bf x}^{\Des_B(\sQ)}q^{n-\ldes(\sP)}}\\
& = &\sum\limits_{k=0}^{\lfloor n/2\rfloor}{\sum\limits_{(P,Q)\in \BSYT^2((k),(n-k))}
{\bf x}^{\Des_B(Q)}q^{n-\ldes(P)}}\\
& + &\sum\limits_{k=0}^{\lfloor (n-1)/2 \rfloor}{\sum\limits_{(P,Q)\in \BSYT^2((n-k),(k))}
{\bf x}^{\Des_B(Q)}q^{n-\ldes(P)}}
\end{eqnarray*}

and 

\begin{eqnarray*}
\sum\limits_{k=1}^{\lfloor n/2\rfloor}
\sum\limits_{(P,Q)\in \DSYT^2(2n-2k,2k-1,1)}
{\bf x}^{\Des_B(Q)}q^{n-\ldes(P)}
& = &\sum\limits_{k=1}^{\lfloor n/2 \rfloor} 
\sum\limits_{(P,Q)\in \BSYT^2(\emptyset,(n-k,k))}
{\bf x}^{\Des_B(Q)}q^{n-\ldes(P)}\\
& = &\sum\limits_{k=1}^{\lfloor n/2\rfloor} 
\sum\limits_{(P,Q)\in \SYT^2(n-k,k)}{\bf x}^{\Des_B(Q)}q^{n-\ldes(P)},
\end{eqnarray*}
where the last equality is due to the obvious descent-preserving bijection between 
$\BSYT(\emptyset,(n-k,k))$ and $\SYT(n-k,k)$. 
Now, by Theorem~\ref{thm:3} and Equation~\eqref{eq:A}, we obtain

\begin{eqnarray*}
\sum_{w\in \FC(B_n)\setminus \FC(S_n)} {\bf x}^{\Des_B(w)}q^{\bl(w^{-1})} & = &
\sum_{w\in \FC(B_n)} {\bf x}^{\Des_B(w)}q^{\bl(w^{-1})}-
\sum_{w\in \FC(S_n)} {\bf x}^{\Des_B(w)}q^{\bl(w^{-1})} \\
& = &
\sum_{k=0}^{\lfloor n/2\rfloor}\sum_{(P,Q)\in \BSYT^2((k),(n-k))}{\bf x}^{\Des_B(Q)}q^{n-\ldes(P)} \\
& + &
\sum\limits_{k=0}^{\lfloor (n-1)/2 \rfloor}{\sum\limits_{(P,Q)\in \BSYT^2((n-k),(k))}
{\bf x}^{\Des_B(Q)}q^{n-\ldes(P)}}\\
& + &
\sum_{k=1}^{\lfloor n/2\rfloor}\sum_{(P,Q)\in \SYT^2(n-k,k)}
{\bf x}^{\Des_B(Q)}q^{n-\ldes(P)}
\\
& - &\sum\limits_{k=0}^{\lfloor n/2\rfloor} 
\sum\limits_{(P,Q)\in \SYT^2(n-k,k)}{\bf x}^{\Des_B(Q)}
q^{n-\ldes(P)},
\end{eqnarray*}
which is equal to the RHS of Equation~\eqref{eq:4}.
\end{proof}

\begin{proof}[Proof of Theorem~\ref{thm:main}]
By Corollary~\ref{thm:4} we have 
\begin{multline*}
\sum\limits_{w\in \FC(B_n)\setminus \FC(S_n)} {\bf x}^{\Des_B(w)}q^{\bl(w^{-1})}  = 
\sum_{k=1}^{\lfloor n/2\rfloor} \left (\sum_{P \in \BSYT((k),(n-k))} q^{n-\ldes(P)} \right) \left( \sum_{Q \in \BSYT((k),(n-k))} {\bf x}^{\Des_B(Q)}\right )\\
+ \sum_{k=0}^{\lfloor (n-1)/2\rfloor}\left (\sum_{P \in \BSYT((n-k),(k))} q^{n-\ldes(P)} \right) \left( \sum_{Q \in \BSYT((n-k),(k))} {\bf x}^{\Des_B(Q)} \right).
\end{multline*}
 Remark~\ref{rem:bijections} then transforms both sums over $Q$ on the right-hand side of the above identity to sums over $\sQ\in \DSYT(2n-2k,2k)$ and $\sQ\in \DSYT(2n-2k-1,2k+1)$, respectively.
Applying the vector space homomorphism from the multi-linear subspace of the formal power series ring $\ZZ[x_0,x_1,x_2,\ldots]$
to the ring of Chow's type $B$ quasi-symmetric functions, defined by ${\bf x}^J\mapsto F^B_{n,J}$, to both sides of the resulting formula, we can then use
Proposition~\ref{thm:schurB} to transform the sums over $\sQ$ and get:
\begin{multline*}
\sum\limits_{w \in \FC(B_n)\setminus \FC(S_n)} q^{\bl(w^{-1})} F^B_{\Des_B(w)}= 
\sum_{k=1}^{\lfloor n/2\rfloor} \left (\sum_{P \in \BSYT((k),(n-k))} q^{n-\ldes(P)} \right) s_{(k)}(x_1,x_2,\ldots)\ s_{(n-k)}(x_0,x_1,\ldots)\\
+ \sum_{k=0}^{\lfloor (n-1)/2\rfloor}\left (\sum_{P \in \BSYT((n-k),(k))} q^{n-\ldes(P)} \right) s_{(n-k)}(x_1,x_2,\ldots)\ s_{(k)}(x_0, x_1,\ldots).
\end{multline*}
The conclusion follows by replacing $k$ by $n-k$ in the second sum and noting that every $P \in \BSYT((k),(n-k))$ 
may be identified with a $T\in \SYT((n,k)/(k))$ 
with same $\ldes$.


\end{proof}




\section{Two notions of type B Schur-positivity}\label{sec:types}

A subset $A\subseteq S_n$ is {\em Schur-positive} if the quasisymmetric function $\Q(A):=\sum_{w\in A}F_{\Des(w)}$ 
is symmetric and Schur-positive.
Here $\{F_J\mid J\subseteq [n-1]\}$ are Gessel's fundamental quasi-symmetric functions
and $\Des(w)$ is the standard descent set from~\eqref{eq:DesA}.
Determining
whether a given symmetric function is Schur-positive is a major problem in
contemporary algebraic combinatorics~\cite{Stanley_problems}.

\smallskip 

As mentioned in the introduction, the concept of quasi-symmetric functions has been extended to Coxeter groups of type $B$ in two different
ways. The two associated notions of type $B$ Schur-positivity follow.

\medskip

Recall Chow's type $B$ fundamental quasi-symmetric functions $\{F^B_J \mid J\subseteq \{0\}\cup [n-1]\}$ from Definition~\ref{def:Chow_fundamental} and the type $B$ right descent set $\Des_B(\pi)$ from~\eqref{eq:DesB}. Definition~\ref{def:domino_function} of domino functions 
leads to the following type $B$ Schur-positivity notion, introduced in~\cite{MV4}.

\begin{definition}
A subset $A\subseteq B_n$ is {\em Chow type $B$ Schur-positive} if
the Chow  quasi-symmetric function
\[
\Q^C(A):=\sum\limits_{w\in A} F^B_{\Des_B(w)}
\]
can be written as a non-negative sum of 
domino functions.
\end{definition}

\begin{proposition}\label{cor:Chow_FC}
For every $n\ge j\ge 1$, the set $\{w\in \FC(B_n)|\ \bl(w^{-1})=j\}$ is Chow type $B$ Schur-positive.
\end{proposition}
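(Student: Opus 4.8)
The plan is to read off the desired statement as the coefficient of $q^j$ in the identity of Theorem~\ref{thm:3}, after transporting that identity into the ring of Chow's type $B$ quasi-symmetric functions. Concretely, I would apply to both sides of Theorem~\ref{thm:3} the vector space homomorphism ${\bf x}^J\mapsto F^B_{n,J}$ already used in the proof of Theorem~\ref{thm:main}. Since this map is linear over the coefficient ring containing $q$, it sends the left-hand side to $\sum_{w\in\FC(B_n)}q^{\bl(w^{-1})}F^B_{\Des_B(w)}$, whose coefficient of $q^j$ is exactly $\Q^C(\{w\in\FC(B_n)\mid \bl(w^{-1})=j\})$, the quantity we must exhibit as a nonnegative combination of domino functions.

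For the right-hand side, I would first note that in each double sum over $\DSYT^2(\lambda)=\DSYT(\lambda)\times\DSYT(\lambda)$ the ${\bf x}$-weight depends only on the second tableau $\sQ$ and the $q$-weight only on the first tableau $\sP$, so each sum factors. After applying the homomorphism, the inner sum over $\sQ$ becomes $\sum_{\sQ\in\DSYT(\lambda)}F^B_{\Des_B(\sQ)}=\mathcal{G}_\lambda$ by Proposition~\ref{prop:domino-chow}. Thus the image of the right-hand side is
\[
\sum_{k=0}^n\Bigl(\sum_{\sP\in\DSYT(2n-k,k)}q^{n-\ldes(\sP)}\Bigr)\mathcal{G}_{(2n-k,k)}+\sum_{k=1}^{\lfloor n/2\rfloor}\Bigl(\sum_{\sP\in\DSYT(2n-2k,2k-1,1)}q^{n-\ldes(\sP)}\Bigr)\mathcal{G}_{(2n-2k,2k-1,1)}.
\]
Extracting the coefficient of $q^j$ then presents $\Q^C(\{w\in\FC(B_n)\mid \bl(w^{-1})=j\})$ as
\[
\sum_{k=0}^n\#\{\sP\in\DSYT(2n-k,k)\mid \ldes(\sP)=n-j\}\,\mathcal{G}_{(2n-k,k)}+\sum_{k=1}^{\lfloor n/2\rfloor}\#\{\sP\in\DSYT(2n-2k,2k-1,1)\mid \ldes(\sP)=n-j\}\,\mathcal{G}_{(2n-2k,2k-1,1)}.
\]
Since each coefficient is the cardinality of a set of domino tableaux, it is a nonnegative integer, so the displayed expression is a nonnegative sum of domino functions, which is precisely the definition of Chow type $B$ Schur-positivity.

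The argument is short because all the real work has been done in Theorem~\ref{thm:3} and in the domino analogue of Gessel's theorem (Proposition~\ref{prop:domino-chow}); the only point requiring care is that passing to the coefficient of $q^j$ commutes with the homomorphism ${\bf x}^J\mapsto F^B_{n,J}$. This holds because the homomorphism acts solely on the ${\bf x}$-variables and fixes $q$, so no genuine obstacle arises. I would also record that the range $1\le j\le n$ in the statement matches the fact that the $q$-exponents $n-\ldes(\sP)$ occurring above all lie in $\{1,\dots,n\}$, the case $j=0$ being vacuous.
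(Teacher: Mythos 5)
Your proof is correct, but it takes a genuinely different (and shorter) route than the paper's. The paper does not return to Theorem~\ref{thm:3}; instead it splits $\FC(B_n)$ into $\FC(S_n)$ and its complement, handles the complement with Theorem~\ref{thm:main}, and handles $\FC(S_n)$ by combining the type $A$ result (Theorem~\ref{thm:ABR}, after noting that $\bl(w)=\bl(w^{-1})$ for $w\in S_n$) with the transfer principle of Lemma~\ref{lem_GC}, which converts a type $A$ Schur expansion of $\Q(A)$ into a Chow-type expansion of $\Q^C(A)$; it then recognizes each product $s_{\lambda^-}(x_1,x_2,\dots)\,s_{\lambda^+}(x_0,x_1,\dots)$ as a domino function via Propositions~\ref{prop:domino-chow} and~\ref{thm:schurB}. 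Your argument instead stays entirely in the domino world: Theorem~\ref{thm:3}, the factorization of the sums over $\DSYT^2(\lambda)$, and Proposition~\ref{prop:domino-chow} give $\Q^C(\{w\in\FC(B_n)\mid \bl(w^{-1})=j\})$ directly as a nonnegative integer combination of the functions $\mathcal{G}_\lambda$ for admissible $\lambda$, with coefficients counting domino tableaux with $\ldes=n-j$. What your route buys: it bypasses Theorem~\ref{thm:main}, Theorem~\ref{thm:ABR}, Lemma~\ref{lem_GC}, and the bi-tableau translation of Remark~\ref{rem:bijections} altogether, relying only on the cell decomposition and equidistribution already packaged in Theorem~\ref{thm:3}. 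What the paper's route buys: the coefficients come out as counts of ordinary and skew standard Young tableaux (the $a_{n,k,j}$ and $b_{n,k,j}$ of Theorems~\ref{thm:ABR} and~\ref{thm:main}), and the proof exercises Lemma~\ref{lem_GC}, which is of independent interest for the paper's comparison of the Poirier and Chow positivity notions. Your closing remark about the range of $j$ is also sound: $\ldes(\sP)\le n-1$ for a domino tableau of size $n$ and $\bl(w^{-1})\ge 1$ for every $w$, so all $q$-exponents on both sides lie in $\{1,\dots,n\}$ and the case $j=0$ is vacuous.
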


We will first prove the following lemma.
Consider the natural embedding of $S_n$ in $B_n$. 

\begin{lemma}\label{lem_GC}
Let $A\subseteq S_n\subseteq B_n$ and $\Q(A)=\sum_{w\in A} F_{\Des(w)}(x_1,x_2,\dots)$. If $\Q(A)$ is symmetric in $x_1,x_2,\dots$
then $\Q^C(A)$ is symmetric in $x_0,x_1,\dots$, and for every $\lambda\vdash n$
\[
\langle \Q^C(A), s_\lambda(x_0,x_1,\dots)\rangle= \langle \Q(A), s_\lambda(x_1,x_2,\dots)\rangle,
\]
where $\langle \cdot, \cdot\rangle$ is the standard scalar product
on symmetric functions. 
\end{lemma}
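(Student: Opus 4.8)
The plan is to trace through the definitions and reduce the entire statement to a single variable-relabeling of symmetric functions. First I would record the effect of the embedding $S_n\subseteq B_n$ on descent sets. For $w\in S_n$ all entries $w_1,\dots,w_n$ are positive, so with the convention $w_0:=0$ we have $w_0<w_1$, whence $0\notin\Des_B(w)$; consequently $\Des_B(w)=\Des(w)\subseteq[n-1]$. This is exactly where the hypothesis $A\subseteq S_n$ enters, and it guarantees that only Chow functions $F^B_J$ with $0\notin J$ occur in $\Q^C(A)$.

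The key observation is then a direct comparison of Definition~\ref{def:Chow_fundamental} with the definition of Gessel's $F_J$: for $J\subseteq[n-1]$ the implication ``$0\in J\Rightarrow i_0<i_1$'' is vacuous, so the summation conditions defining $F^B_J(x_0,x_1,\dots)$ coincide with those defining Gessel's fundamental quasi-symmetric function in the ordered alphabet $x_0<x_1<x_2<\cdots$. In other words,
\[
F^B_J(x_0,x_1,\dots)=F_J(x_0,x_1,\dots)\qquad(J\subseteq[n-1]),
\]
where the right-hand side is Gessel's $F_J$ evaluated with one extra smallest variable $x_0$ prepended. Summing over $w\in A$ yields $\Q^C(A)(x_0,x_1,\dots)=\Q(A)(x_0,x_1,\dots)$; that is, $\Q^C(A)=\sigma(\Q(A))$, where $\sigma$ denotes the index shift $x_i\mapsto x_{i-1}$.

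It then remains to invoke the formal properties of $\sigma$. The map $\sigma$ is a ring isomorphism from the symmetric functions in $x_1,x_2,\dots$ onto the symmetric functions in $x_0,x_1,\dots$ carrying $s_\lambda(x_1,x_2,\dots)$ to $s_\lambda(x_0,x_1,\dots)$; in particular it sends symmetric functions to symmetric functions, so the hypothesis that $\Q(A)$ is symmetric immediately gives that $\Q^C(A)$ is symmetric. Finally, writing $\Q(A)=\sum_\mu c_\mu\, s_\mu(x_1,x_2,\dots)$ in the Schur basis and applying $\sigma$ gives $\Q^C(A)=\sum_\mu c_\mu\, s_\mu(x_0,x_1,\dots)$, so that both sides of the claimed identity equal $c_\lambda$ by orthonormality of the Schur basis under the Hall inner product.

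I do not expect a serious obstacle here: the content lies entirely in the clean identification $F^B_J=F_J$ (for $0\notin J$) after prepending a smallest variable, together with the routine fact that relabeling variables is an isometric isomorphism fixing the Schur basis. The only point demanding a little care is to confirm that $\sigma$ preserves the Hall inner product, which holds precisely because it maps the orthonormal Schur basis to the orthonormal Schur basis.
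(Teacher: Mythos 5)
Your proof is correct, but it follows a genuinely different route from the paper's. The paper's argument expands $\Q(A)=\sum_{\lambda\vdash n} c_\lambda\, s_\lambda(x_1,x_2,\dots)$ into Gessel fundamentals via Theorem~\ref{thm:schur}, matches coefficients to obtain $a_{A,J}=\sum_\lambda c_\lambda b_{\lambda,J}$ (where $a_{A,J}$ and $b_{\lambda,J}$ count descent sets over $A$ and over $\SYT(\lambda)$ respectively), and then reassembles $\sum_J b_{\lambda,J}F^B_J$ into $s_\lambda(x_0,x_1,\dots)$ by invoking Proposition~\ref{thm:schurB}, i.e.\ the domino-function machinery together with the identification of $\SYT(\lambda)$ with $\BSYT(\emptyset,\lambda)$ and with $\SDT(\mu)$ for $\psi(\mu)=(\emptyset,\lambda)$, which rests on the Carr\'e--Leclerc/Mayorova--Vassilieva results (Lemma~\ref{lemma:MV}). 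You bypass all of that tableau machinery with the single observation that, for $J\subseteq[n-1]$ with $0\notin J$, the defining summation conditions of Chow's $F^B_J(x_0,x_1,\dots)$ and of Gessel's $F_J$ read in the alphabet $x_0<x_1<x_2<\cdots$ are literally identical, so that $\Q^C(A)=\sigma(\Q(A))$ for the order-preserving relabeling $\sigma\colon x_i\mapsto x_{i-1}$; since $\sigma$ carries the Schur basis of $\Sym(x_1,x_2,\dots)$ to the Schur basis of $\Sym(x_0,x_1,\dots)$, both the symmetry of $\Q^C(A)$ and the equality of Schur coefficients follow formally. Your route is more elementary and self-contained — indeed, combined with Theorem~\ref{thm:schur} it gives a direct proof of exactly the special case $\lambda^-=\emptyset$ of Proposition~\ref{thm:schurB} that the paper cites — whereas the paper's route has the virtue of staying inside the domino-function framework used throughout (the same tools also handle genuinely two-component bi-shapes, as in Theorem~\ref{lem:PC}, where the identity $F^B_J=\sigma(F_J)$ no longer applies since $0$ may lie in the descent sets).
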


\begin{proof}
For all $\lambda\vdash n$, consider
\[
c_\lambda:=\langle \Q(A), s_\lambda(x_1,x_2,\dots)\rangle.
\]
By assumption, $\Q(A)$ is symmetric in $x_1,x_2,\dots$. Thus, by  Theorem~\ref{thm:schur},
\[
\sum\limits_{J\subseteq [n-1]} a_{A,J} F_J=
\Q(A)=
\sum\limits_{\lambda\vdash n} c_\lambda\ s_\lambda(x_1,x_2,\dots)=
\sum\limits_{\lambda\vdash n} c_\lambda \sum\limits_{T\in \SYT(\lambda)}F_{\Des(T)}=\sum\limits_{\lambda\vdash n} c_\lambda \sum\limits_{J\subseteq [n-1]} b_{\lambda,J}F_J,
\]
where $a_{A,J}:=\#\{w\in A\mid \Des(w)=J\}$ and
$b_{\lambda,J}:=\#\{T\in \SYT(\lambda)\mid \Des(T)=J\}$.
It follows that
\[
a_{A,J}= \sum\limits_{\lambda\vdash n} c_\lambda b_{\lambda,J}.
\]
Now notice that
for every $w\in S_n$, $0 \not\in \Des_B(\pi)$, thus 
$\Des_B(w)=\Des(w)$ and $\#\{w\in A\mid \Des_B(w)=J\}=a_{A,J}$. 
It follows that
\begin{eqnarray*}
\Q^C(A)&=&\sum\limits_{w\in A} F^B_{\Des_B(w)}=\sum\limits_{J\subseteq [n-1]} a_{A,J} F^B_{J}= \sum\limits_{J\subseteq [n-1]} \sum\limits_{\lambda\vdash n} c_\lambda b_{\lambda,J} F^B_J\\
&=& \sum\limits_{\lambda\vdash n} c_\lambda
 \sum\limits_{J\subseteq [n-1]} b_{\lambda,J} F^B_J= \sum\limits_{\lambda\vdash n} c_\lambda\ s_{\lambda}(x_0,x_1,\dots).
\end{eqnarray*}
The last equality follows from Proposition~\ref{thm:schurB}, by noticing that $\SYT(\lambda)$ can be identified with $\BSYT(\emptyset,\lambda)$, and then with $\SDT(\mu)$, where  $(\emptyset,\lambda)=\psi(\mu)$ and $\mu$ is an empty 2-core, see Section~\ref{sec:Littlewood}.

\end{proof}

\begin{proof}[Proof of Proposition~\ref{cor:Chow_FC}.]
First notice that for every $w\in S_n$, $\bl(w)=\bl(w^{-1})$,
while for $w\in B_n\setminus S_n$ this is not necessarily the case. 
Combining this with   
Theorem~\ref{thm:main}, 
Theorem~\ref{thm:ABR} and Lemma~\ref{lem_GC}, we obtain
\begin{eqnarray*}
\sum\limits_{w \in \FC(B_n)
} q^{\bl(w^{-1})} F^B_{\Des_B(w)}
&=&\sum\limits_{w\in \FC(B_n)\setminus \FC(S_n)} q^{\bl(w^{-1})} F^B_{\Des_B(w)}+\sum\limits_{w\in \FC(S_n)} q^{\bl(w^{-1})} F^B_{\Des_B(w)}\\
&=&
\sum\limits_{k=1}^{n} \left(\sum\limits_{j=0}^n b_{n,k,j} q^j \right)
s_{(k)}(x_1,x_2,\ldots)\ s_{(n-k)}(x_0,x_1,\ldots)\\
&&\hskip1cm+\sum\limits_{k=0}^{\lfloor n/2\rfloor}
\left(\sum\limits_{j=0}^n  a_{n,k,j} \ q^j\right) s_{(n-k,k)}(x_0,x_1,\dots),
\end{eqnarray*}
with  non-negative integer coefficients $b_{n,k}$ and $a_{n,k}$. Equivalently,
\begin{multline*}
\Q^C(\{w\in\FC(B_n)|\ \bl(w^{-1})=j\})=
\sum\limits_{k=1}^{n}  b_{n,k,j}\ 
s_{(k)}(x_1,x_2,\ldots)\ s_{(n-k)}(x_0,x_1,\ldots)\\
+\sum\limits_{k=0}^{\lfloor n/2\rfloor}  a_{n,k,j}\   s_{(n-k,k)}(x_0,x_1,\dots).
\end{multline*}
By Propositions~\ref{prop:domino-chow} and~\ref{thm:schurB}, 
for every $\lambda\in P^{0}(n)$, ${\mathcal G}_\lambda=s_{\lambda^{-}}(x_1,x_2,\ldots)\ s_{\lambda^{+}}(x_0,x_1,\ldots)$, thus the right hand side is a non-negative
sum of domino functions, completing the proof.
\end{proof}

\bigskip

Another definition of type $B$ Schur-positivity was suggested in~\cite{AAER}, using Poirier's type $B$ quasi-symmetric functions,
which were introduced in~\cite{Po}. The following definition reformulates~\cite{Po, AAER}.
Let ${\X}:=(x_1,x_2,\ldots)$ and ${\Y}:=(y_1,y_2,\ldots)$ be two infinite sets of formal variables.

\begin{definition} We define the following. 
\begin{itemize}
    \item[1.] 
Let $<_r$ be the order on $[\pm n]$ 
\[
-1 <_r -2<_r\cdots<_r-n<_r1<_r2<_r\cdots<_rn.
\]
The {\em r-descent set} of $w\in B_n$ is 
\[
\wDes(w):=\{1\le i< n|\ w_i>_r w_{i+1}\}.
\]


\item[2.] The {\em Poirier type $B$ quasi-symmetric function}, associated with
$w\in B_n$ is
\[
    F^P_w (\X, \Y) \ :=
    \sum_{\substack{1\le i_1 \le i_2 \le \ldots \le i_n \\ j \in \wDes(\sigma) \,\Rightarrow\, i_j < i_{j+1}}}
    z_{i_1} z_{i_2} \cdots z_{i_n}
\]
where $z_{i_j} = x_{i_j}$ if $j\not\in\Neg(w)$, and $z_{i_j} = y_{i_j}$ 
if $j\in \Neg(w)$. 
\smallskip 

\item[3.] For a subset $A\subseteq B_n$ let 
\[
\Q^P(A):=\sum\limits_{w\in A} F^P_w(\X,\Y).
\]
A subset $A\subseteq B_n$ is {\em Poirier type $B$ Schur-positive}
if $\Q^P(A)$ is symmetric in $\X,\Y$ and can be expanded as a non-negative sum in the basis  $s_\lambda(\X)  s_\mu(\Y)$. 
\end{itemize}
\end{definition}
\begin{example}
Let $w=[-3,-1,2]\in B_3$. Then $\wDes(w)=\{1\}$ and thus $F_w^P(X,Y)=\sum\limits_{i_1 < i_2 \leq i_3}y_{i_1}y_{i_2}x_{i_3}$. 

\end{example}
\begin{remark}\label{rem:P-equid}
 The {\em signed descent set} of a signed permutation $w\in B_n$
is the pair $(\wDes(w),\Neg(w))$.
The {\em signed descent set} of a bi-tableau $T=(T^-,T^+)$ of bi-shape $(\lambda^-, \lambda^+)$ 
is the pair $(\Des(T), \Neg(T))$
where  $\Des(T)$ is the descent set of $T$ defined in~\eqref{eq:bi_st}, and $\Neg(T)$ is the set of entries in $T^-$.
By~\cite[Cor. 3.7]{AAER}, 
a subset $A\subseteq B_n$ is 
Poirier type $B$ Schur-positive if and only if the distribution of the signed descent set
over $A$ is equal to its distribution over bi-tableaux of some multiset of bi-shapes. 
Furthermore, in this case, 
\[
\Q^P(A)=\sum\limits_{\lambda\in P^0(n)}
c_\lambda\  s_{\lambda^-}(\X) s_{\lambda^+}(\Y)
\]
if and only if
\[
\sum\limits_{w\in A} {\bf x}^{\wDes(w)}{\bf y}^{\Neg(w)}=
\sum\limits_{\lambda\in P^0(n)} c_\lambda \sum\limits_{T\in \BSYT(\lambda^-,\lambda^+)}
{\bf x}^{\Des(T)}{\bf y}^{\Neg(T)}.
\]
Here we use the notation from Section~\ref{sec:2.3}, $P^0(n)$ 
for the set of partitions of $2n$ with empty 2-core, and
$(\lambda^-,\lambda^+)$  
for the $2$-quotient of a partition $\lambda\in P^0(n)$.
\end{remark}






\begin{remark}\label{rem:P-examples}
Examples of Poirier type $B$ Schur-positive sets include
conjugacy classes~\cite[Theorem 16]{Po} and inverse signed descent classes $\{w\in B_n|\ \Des(w^{-1})=I, \Neg(w^{-1})=J\}$ ~\cite[Proposition 5.5.1]{AAER}.
For more examples see~\cite{AAER}.
\end{remark}

\begin{proposition}\label{prop:Porier_FC}
For every $n>2$, $\FC(B_n)$ is not Poirier type $B$ Schur-positive.
\end{proposition}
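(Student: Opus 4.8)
The plan is to invoke the equidistribution criterion of Remark~\ref{rem:P-equid}: the set $\FC(B_n)$ is Poirier type $B$ Schur-positive if and only if there exist non-negative integers $c_\lambda$, $\lambda\in P^0(n)$, with
\[
\sum_{w\in \FC(B_n)} {\bf x}^{\wDes(w)}{\bf y}^{\Neg(w)}=\sum_{\lambda\in P^0(n)} c_\lambda \sum_{T\in \BSYT(\lambda^-,\lambda^+)} {\bf x}^{\Des(T)}{\bf y}^{\Neg(T)}.
\]
First I would extract a simple necessary condition by specializing all descent variables to $1$ (i.e.\ setting $x_i=1$ for every $i$), which kills the $\wDes$ statistic and leaves only the distribution of the negative set. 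On the right-hand side, for a fixed bi-shape $(\lambda^-,\lambda^+)$ the entries of the component $T_{\lambda^-}$ range over every $|\lambda^-|$-subset of $[n]$, each occurring in exactly $f^{\lambda^-}f^{\lambda^+}$ standard bi-tableaux, whence $\sum_{T}{\bf y}^{\Neg(T)}=f^{\lambda^-}f^{\lambda^+}\,e_{|\lambda^-|}(y_1,\dots,y_n)$. Comparing homogeneous components of degree $j$ in the ${\bf y}$ variables then forces, for every $j$, an identity $\sum_{w:\,|\Neg(w)|=j}{\bf y}^{\Neg(w)}=d_j\,e_j(y_1,\dots,y_n)$ with $d_j\ge 0$. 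In words: a necessary condition for Poirier Schur-positivity is that, for each fixed number $j$ of negative entries, every $j$-subset of $[n]$ occurs as the negative set $\Neg(w)$ of the \emph{same} number of elements $w\in\FC(B_n)$.

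The second step is to show that this uniformity already fails at $j=1$ for every $n>2$, using Stembridge's pattern characterization (Proposition~\ref{prop:fc1}). I would compute the two boundary counts $a_i:=\#\{w\in\FC(B_n)\mid \Neg(w)=\{i\}\}$ for $i=n$ and $i=1$. For $\Neg(w)=\{n\}$, writing $w=[w_1,\dots,w_{n-1},-c]$, any descent $w_i>w_j$ among the (positive) first $n-1$ letters produces together with $-c$ a forbidden pattern $[a,b,c']$ with $|a|>b>c'$; hence the positive prefix must be increasing, $w$ is determined by the negated value, and $a_n=n$. For $\Neg(w)=\{1\}$, writing $w=[-a,w_2,\dots,w_n]$, the admissible $w$ are exactly those whose positive part is $321$-avoiding and for which the subsequence of positive letters smaller than $a$ is increasing; the choices $a=1$ and $a=2$ impose no condition beyond $321$-avoidance, so each contributes $C_{n-1}$ elements and $a_1\ge 2C_{n-1}$. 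Since $C_{m+1}\ge 2C_m$ for $m\ge 1$ yields $C_{n-1}\ge 2^{\,n-2}$, we get $a_1\ge 2^{\,n-1}>n=a_n$ for all $n>2$. Thus the coefficients of $y_1$ and $y_n$ in the degree-one part differ, that part is not a scalar multiple of $e_1$, and the necessary condition is violated, proving the proposition.

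The main obstacle I anticipate is the first step, namely correctly isolating and justifying the ``uniformity'' necessary condition: the evaluation $\sum_{T}{\bf y}^{\Neg(T)}=f^{\lambda^-}f^{\lambda^+}e_{|\lambda^-|}({\bf y})$ and the legitimacy of the specialization $x_i=1$ within the equidistribution reformulation of Remark~\ref{rem:P-equid}. Once this reduction is in place, the refutation is a short pattern-avoidance count. I would also note that the same defect persists at $j=n-1$ and at $j=2$ (already for $n=3$ one finds the non-uniform distributions $(5,5,3)$ and $(3,1,1)$ of $\Neg$), which explains concretely why $\FC(B_n)$ is Chow type $B$ Schur-positive by Proposition~\ref{cor:Chow_FC} yet fails to be Poirier type $B$ Schur-positive, the content discussed in Remark~\ref{rem:converse}.
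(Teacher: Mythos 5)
Your proposal is correct and follows essentially the same route as the paper: both use Remark~\ref{rem:P-equid} to derive the necessary condition that $\#\{w\in\FC(B_n)\mid \Neg(w)=\{i\}\}$ be independent of $i$, and both refute it by showing this count equals $n$ for $i=n$ (increasing positive prefix forced by the pattern $|a|>b>c$) but exceeds $n$ for $i=1$. Your degree-by-degree derivation of uniformity via $e_j$ and your lower bound $2C_{n-1}$ for the $i=1$ count are mild strengthenings of the paper's corresponding steps (an observation for bi-shapes $((1),\lambda)$ and an explicit list of $2n-2$ elements), but the argument is the same.
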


\begin{proof}
Observe that for any $\lambda\vdash n-1$ the number of standard bi-tableaux of bi-shape $((1),\lambda)$ with $\Neg(T)=\{i\}$
is independent of $i$.
Combining this with Remark~\ref{rem:P-equid}, we deduce that for every Poirier type $B$ Schur-positive set $A\subseteq B_n$, 
the cardinality of the set
$\{w\in A \mid \Neg(w)=\{i\}\}$ is independent of $i$. For $n\ge 3$, the set $\FC(B_n)$ violates this condition as follows.
By Proposition~\ref{prop:fc1}, $\#\{w\in \FC(B_n) \mid \Neg(w)=\{n\}\}=n$, since $w=[w_1,\dots,w_n]\in \FC(B_n)$ avoids a decreasing subsequence of order 3,
thus for every $j$, the only signed permutation in  $\FC(B_n)$ with $w_n=-j$ is $[1,2,\dots,j-1,j+1,\dots,n,-j]$. On the other hand,
$\#\{w\in \FC(B_n)\mid \Neg(w)=\{1\} \}\ge 2n-2$, since for every $1\le j< n-1$ 
there are at least two signed permutations in $\FC(B_n)$ with $w_1=-j$,  $[-j,1,2,\dots,j-1,j+1,\dots,n-2,n,n-1]$ and $[-j,1,2,\dots,j-1,j+1,\dots,n]$,
the latter is in $\FC(B_n)$ for $j=n-1,n$ as well. 
\end{proof}


\begin{theorem}\label{lem:PC} We have the following.
\begin{itemize}
\item[1.] 
 Every Poirier type $B$ Schur-positive set $A$ is a Chow type $B$ Schur-positive set.
 
 \item[2.] In this case,
if
\[
\Q^P(A)=\sum\limits_{\lambda\in P^0(n)}c_{\lambda}\  s_{\lambda^-}(\X) s_{\lambda^+}(\Y)
\]
then
\[
\Q^C(A)=\sum\limits_{\lambda\in P^0(n)}c_{\lambda}\ s_{(\lambda^-)'}(x_1,x_2,\dots) s_{\lambda^+}(x_0,x_1,\dots),
\]
where $(\lambda^-,\lambda^+)$ is the 2-quotient of $\lambda$, and $(\lambda^-)'$ is the conjugate partition of $\lambda^-$.
\end{itemize}
\end{theorem}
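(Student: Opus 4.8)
The plan is to derive both parts simultaneously by playing the two combinatorial characterizations of type $B$ Schur-positivity against each other: the Poirier one through the equidistribution of the signed descent set (Remark~\ref{rem:P-equid}), and the Chow one through domino functions (Proposition~\ref{thm:schurB} together with Lemma~\ref{lemma:MV}). The bridge is the elementary fact that $\Des_B(w)$ is a fixed function of the signed descent set $(\wDes(w),\Neg(w))$. Comparing the standard order underlying~\eqref{eq:DesB} with the order $<_r$ underlying $\wDes$, a case analysis on the signs of $w_i,w_{i+1}$ shows that the two orders agree except when both are negative, where they are complementary; concretely, $0\in\Des_B(w)\Leftrightarrow 1\in\Neg(w)$ and, for $1\le i\le n-1$, $[\,i\in\Des_B(w)\,]=[\,i\in\wDes(w)\,]\oplus[\,\{i,i+1\}\subseteq\Neg(w)\,]$. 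Applying the very same rule to the pair $(\Des(T),\Neg(T))$ of a standard Young bi-tableau defines a set $\widetilde{\Des_B}(T)$.

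Assuming $A$ is Poirier type $B$ Schur-positive with $\Q^P(A)=\sum_{\lambda\in P^0(n)}c_\lambda\,s_{\lambda^-}(\X)\,s_{\lambda^+}(\Y)$ and $c_\lambda\ge 0$, Remark~\ref{rem:P-equid} provides the identity $\sum_{w\in A}{\bf x}^{\wDes(w)}{\bf y}^{\Neg(w)}=\sum_\lambda c_\lambda\sum_{T\in\BSYT(\lambda^-,\lambda^+)}{\bf x}^{\Des(T)}{\bf y}^{\Neg(T)}$. Reading off coefficients shows that the joint distribution of $(\wDes,\Neg)$ over $A$ equals its distribution over the multiset $\biguplus_\lambda c_\lambda\,\BSYT(\lambda^-,\lambda^+)$. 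Since $\Des_B$ and $\widetilde{\Des_B}$ are the same function of this pair, I would conclude $\sum_{w\in A}{\bf x}^{\Des_B(w)}=\sum_\lambda c_\lambda\sum_{T\in\BSYT(\lambda^-,\lambda^+)}{\bf x}^{\widetilde{\Des_B}(T)}$, and then apply the homomorphism ${\bf x}^J\mapsto F^B_{n,J}$ from the proof of Theorem~\ref{thm:main} to obtain $\Q^C(A)=\sum_\lambda c_\lambda\sum_{T\in\BSYT(\lambda^-,\lambda^+)}F^B_{\widetilde{\Des_B}(T)}$.

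The step I expect to be the main obstacle is identifying this last inner sum, and this is where the conjugation $(\lambda^-)'$ enters. I would introduce the bijection $T=(T^-,T^+)\mapsto T'=((T^-)',T^+)$ from $\BSYT(\lambda^-,\lambda^+)$ onto $\BSYT((\lambda^-)',\lambda^+)$, which preserves $\Neg$. In the glued bi-diagram every cell of $T^+$ sits strictly above every cell of $T^-$, so a descent $i$ with $i,i+1$ in different components is forced by the sign pattern alone and is unaffected by transposing $T^-$; transposing $T^-$ therefore flips exactly the descents $i$ with $\{i,i+1\}\subseteq\Neg(T)$, and a short check (including the $0$ entry, since $1\in(T^-)'\Leftrightarrow 1\in T^-$) gives $\widetilde{\Des_B}(T)=\Des_B(T')$ with $\Des_B$ as in~\eqref{eq:bi_desB}. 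Re-indexing the sum by $T'$ and invoking Proposition~\ref{thm:schurB} through Lemma~\ref{lemma:MV} then yields $\sum_{T'\in\BSYT((\lambda^-)',\lambda^+)}F^B_{\Des_B(T')}=s_{(\lambda^-)'}(x_1,x_2,\dots)\,s_{\lambda^+}(x_0,x_1,\dots)$, which is precisely the formula in part~2.

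Finally, part~1 falls out for free: each summand $s_{(\lambda^-)'}(x_1,\dots)\,s_{\lambda^+}(x_0,\dots)$ is the domino function $\mathcal{G}_\mu$ attached to the unique $\mu\in P^0(n)$ with $\psi(\mu)=((\lambda^-)',\lambda^+)$ (conjugation preserves size, so such a $\mu$ exists), and the coefficients $c_\lambda$ are nonnegative; hence $\Q^C(A)$ is a nonnegative sum of domino functions, i.e. $A$ is Chow type $B$ Schur-positive.
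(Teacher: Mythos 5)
Your proposal is correct and takes essentially the same route as the paper's proof: both rely on Remark~\ref{rem:P-equid}, the conjugation bijection $T=(T^-,T^+)\mapsto T'=((T^-)',T^+)$ together with the fact that conjugating $T^-$ flips exactly the descents $i$ with $\{i,i+1\}\subseteq\Neg(T)$, and the identification of the resulting sum over $\BSYT((\lambda^-)',\lambda^+)$ via Lemma~\ref{lemma:MV} and Proposition~\ref{thm:schurB}. The only (cosmetic) difference is that the paper recovers $\Des_B$ from the pair $(\Des,\Neg)$ by the variable specialization $y_1=x_0$, $y_2=\cdots=y_n=1$, whereas you encode $\Des_B$ up front as an explicit function of the signed descent set; otherwise the arguments coincide.
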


\begin{proof}
Let $A\subseteq B_n$ be a type $B$ Poirier Schur-positive set. 
 By definition,
\[
\Q^P(A)=\sum\limits_{\lambda\in P^0(n)}c_{\lambda}\  s_{\lambda^-}(\X) s_{\lambda^+}(\Y)
\]
with non-negative integer coefficients $c_\lambda\ge 0$. 
By Remark~\ref{rem:P-equid}, 
\begin{equation}\label{eq:111}
\sum\limits_{w\in A} {\bf x}^{\wDes(w)}{\bf y}^{\Neg(w)}=
\sum\limits_{\lambda\in P^0(n)} c_{\lambda} \sum\limits_{T\in \BSYT(\lambda^-,\lambda^+)}
{\bf x}^{\Des(T)}{\bf y}^{\Neg(T)}.
\end{equation}
Let  $T'=((T^-)',T^+)\in \BSYT((\lambda^-)',\lambda^+)$ be the standard Young bi-tableau obtained by transposing $T^-$. 
Note that 
for every $w\in B_n$ 
\[
\Des(T)=\wDes(w)\ \mbox{and} \  \Neg(T)=\Neg(w) \Longleftrightarrow \Des(T')=\Des(w) \ \mbox{and} \ \Neg(T')=\Neg(w).
\]
We conclude that Equation~\eqref{eq:111} is equivalent to the following.
\begin{eqnarray*}
\sum\limits_{w\in A} {\bf x}^{\Des(w)}{\bf y}^{\Neg(w)}&=&\sum\limits_{\lambda\in P^0(n)} c_{\lambda} \sum\limits_{T\in \BSYT(\lambda^-,\lambda^+)}
{\bf x}^{\Des(T')}{\bf y}^{\Neg(T')}\\
&=&\sum\limits_{\lambda\in P^0(n)} c_{\lambda} \sum\limits_{T\in \BSYT((\lambda^-)',\lambda^+)}
{\bf x}^{\Des(T)}{\bf y}^{\Neg(T)}.
\end{eqnarray*}
Setting $y_1=x_0$ and $y_2=\cdots=y_n=1$ we obtain 
\[
\sum\limits_{w\in A} {\bf x}^{\Des_B(w)}=
\sum\limits_{\lambda\in P^0(n)} c_{\lambda} \sum\limits_{T\in \BSYT((\lambda^-)',\lambda^+)}
{\bf x}^{\Des_B(T)},
\]
By Lemma~\ref{lemma:MV}, for every $\lambda\in P^0(n)$ the distribution of $\Des_B$ over $\SDT(\lambda)$ is equal to its distribution
over $\BSYT(\psi(\lambda))$, where $\psi$ is the Littlewood decomposition defined in Section~\ref{sec:Littlewood}. 
Combining this with Propositions~\ref{prop:domino-chow} and~\ref{thm:schurB}, the last equation then implies 
\begin{eqnarray*}
\Q^C(A)&=&\sum\limits_{w\in A}F^B_{\Des_B(w)}=
\sum\limits_{\lambda\in P^0(n)} c_{\lambda} \sum\limits_{T\in \BSYT((\lambda^-)',\lambda^+)}
F^B_{\Des_B(T)}=\sum\limits_{\lambda\in P^0(n)} c_{\lambda} \sum\limits_{\sT\in \SDT(\psi^{-1}((\lambda^-)',\lambda^+)}F^B_{\Des_B(\sT)}\\
&=&\sum\limits_{\lambda\in P^0(n)} c_\lambda 
{\mathcal G}_{\psi^{-1}((\lambda^-)',\lambda^+)}=
\sum\limits_{\lambda\in P^0(n)}c_{\lambda}\ s_{(\lambda^-)'}(x_1,x_2,\dots) s_{\lambda^+}(x_0,x_1,\dots),
\end{eqnarray*}
as desired.
\end{proof}



\begin{remark}
Combining Remark~\ref{rem:P-examples} with Theorem~\ref{lem:PC}.1, 
one deduces that conjugacy classes and inverse signed (or unsigned) descent classes 
in $B_n$ are Chow type $B$ Schur-positive.
\end{remark}

\begin{remark}\label{rem:converse}
The converse of 
 Theorem~\ref{lem:PC}.1 
does not hold. Indeed, 
by Proposition~\ref{cor:Chow_FC}, $\FC(B_n)$ is Chow type $B$ Schur-positive,
while by Proposition~\ref{prop:Porier_FC},
 it is not Poirier type $B$ Schur-positive. 
\end{remark}




\end{document}